\theoremstyle{definition}
\newtheorem{theorem}{Theorem}[section]
\newtheorem{lemma}[theorem]{Lemma}
\newtheorem{corollary}[theorem]{Corollary}
\newtheorem{conjecture}[theorem]{Conjecture}
\newtheorem{proposition}[theorem]{Proposition}
\newtheorem{definition}[theorem]{Definition}
\newtheorem{example}[]{Example}
\newtheorem*{theorem*}{Theorem}
\newtheorem*{conjecture*}{Conjecture}
\newtheorem*{question*}{Question}
\newtheorem*{lemma*}{Lemma}
\newtheorem*{corollary*}{Corollary}
\newtheorem*{definition*}{Definition}
\newtheorem*{example*}{Example}
\newcommand{\ZZ}{\mathbb{Z}}     
\newcommand{\NN}{\mathbb{N}}     
\title{The Fibonacci Sequence is Normal Base 10}
\author[Brennan Benfield]{Brennan Benfield}
\address{Brennan Benfield: Department of Mathematics, University of Hawaii, 2565 McCarthy Mall, Honolulu, HI 96822, USA}
\email{Benfield@math.hawaii.edu}
\author[Michelle Manes]{Michelle Manes}
\thanks{Second author partially supported by Simons Foundation grant number 359721.}
\address{Michelle Manes: Department of Mathematics, University of Hawaii, 2565 McCarthy Mall, Honolulu, HI 96822, USA}
\email{mmanes@math.hawaii.edu}
\subjclass[2020]{11B39, 11B50}
\keywords{Fibonacci numbers, normal numbers, uniform distribution, concatenation}
\begin{document}

\maketitle
\begin{abstract}
In this paper, we show that the concatenation of the Fibonacci sequence is \textit{normal} in base $10$, meaning every string of a given length, $k$, occurs as frequently as every other string of length $k$ (there are as many $1$'s as $2$'s and as many $704$'s and $808$'s). Although we know that almost every number is normal, we can name very few of them. It is still unclear if $e$, $\pi$, or $\sqrt{2}$ are normal. We show that concatenating the Fibonacci sequence behind a decimal creates a normal number in every base of the form $5^x\times2^y$. We then provide evidence that potentially extends our result to all integer bases, and claim that the Fibonacci concatenation is \textit{absolutely normal}.
\end{abstract}
\section{Introduction}

The sequence of Fibonacci numbers $\left(F_{n}\right)_{n\in\NN}$ is given by the recurrence equation: 
\begin{align*}
F_0&=0\\ 
F_1&=1\\ 
F_n&=F_{n-1}+F_{n-2}\\
\left(F_n\right)_{n\in\NN}&=0,1,1,2,3,5,8,13,21,34,55,89,144,233,377,610,987,1597,2584,4181,6765,10946\ldots
\end{align*}

In this paper, we investigate whether this famous sequence, when concatenated behind a decimal, creates a \textit{normal number}. A number in a base $\beta$ is $\textit{normal}$ if every string of length $k$ occurs in the limit as often as every other string of length $k$, namely with a frequency of $1/\beta^k$. To prove that the Fibonacci concatenation is normal in a base $\beta$, we use a technique that measures the frequency of each digit in every place value of the Fibonacci numbers in base $\beta$. A major tool we use is the well-known fact  that the Fibonacci sequence is purely periodic modulo every integer base $\beta$. The length of one period of the Fibonacci sequence modulo a base $\beta$ is known as the \textit{Pisano period}. The aim of this paper is to connect the notion of normal numbers and Pisano periods, to prove that the Fibonacci concatenation is normal in Base $10$, and to provide computations and heuristic evidence that the Fibonacci concatenation \textit{absolutely normal}. 
\begin{theorem}[Main Theorem]
The Fibonacci sequence, when concatenated behind a decimal, is normal in every base of the form $5^x2^y$ for nonnegative integers $x$ and $y$.
\end{theorem}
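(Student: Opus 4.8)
The plan is to show that every finite base-$\beta$ string $w$ of length $k$ occurs in the concatenation $0.F_1F_2F_3\cdots$ (digits taken in base $\beta=5^x2^y$) with asymptotic frequency $\beta^{-k}$. Writing $d_n$ for the number of base-$\beta$ digits of $F_n$, the Binet estimate $\log_\beta F_n = n\log_\beta\varphi-\tfrac12\log_\beta 5+O(\varphi^{-2n})$ gives $d_n=\lfloor n\log_\beta\varphi\rfloor+O(1)$ and a prefix length $L_N=\sum_{n\le N}d_n\sim\tfrac12(\log_\beta\varphi)N^2$. First I would dispose of the occurrences of $w$ that straddle the boundary between consecutive Fibonacci numbers: there are at most $k-1$ of these per boundary and $N$ boundaries up to $F_N$, so they number $O(kN)=o(L_N)$ and cannot affect the limiting frequency. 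This reduces the theorem to an averaged statement purely about the digits of the individual integers $F_n$, namely that $\sum_{n\le N}B(F_n,w)\sim\beta^{-k}L_N$, where $B(F_n,w)$ counts the occurrences of $w$ as a window inside $F_n$.

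The engine for the averaged count is a place-value analysis. A window of length $k$ sitting at base-$\beta$ positions $[i,i+k)$ of $F_n$ reads the value $\lfloor F_n/\beta^{i}\rfloor\bmod\beta^{k}$, which depends only on $F_n\bmod\beta^{i+k}$ and is therefore purely periodic in $n$ with period the Pisano period $\pi(\beta^{i+k})$. Here the hypothesis $\beta=5^x2^y$ enters decisively through two facts I would isolate as lemmas. First, the classical theorem that the Fibonacci sequence is uniformly distributed modulo $m$ precisely when $m$ is a power of $5$: for the $5$-part this makes $\lfloor F_n/5^{i}\rfloor\bmod 5^{k}$ \emph{exactly} equidistributed over one period, so each window value occurs with frequency exactly $5^{-k}$. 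Second, the Pisano-period formulas $\pi(5^{x})=4\cdot5^{x}$ and $\pi(2^{y})=3\cdot2^{y-1}$, together with $\pi(5^x2^y)=\mathrm{lcm}\bigl(\pi(5^x),\pi(2^y)\bigr)$ and the Chinese Remainder Theorem, which let me decouple the (exactly uniform) $5$-part from the $2$-part. For the $2$-part one does not have exact uniformity---indeed $F_n\bmod 2$ is $0,1,1$ repeating---so I would instead prove that the leading base-$\beta$ digit of $F_n\bmod 2^{m}$ becomes equidistributed as $m\to\infty$; the computations $\tfrac13,\tfrac5{12},\ldots$ for the top bit modulo $2^{2},2^{3},\ldots$ suggest convergence to $\tfrac12$, and I would establish it from the self-similar way in which residues modulo $2^{m+1}$ refine those modulo $2^{m}$.

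Combining these, the contribution of each fixed place value $i$ to the digit frequency tends to the uniform value $\beta^{-k}$ as $i\to\infty$: exactly for base $5^x$, and asymptotically once a factor of $2$ is present. To assemble the global count I would detect the congruence $\lfloor F_n/\beta^{i}\rfloor\equiv w\ (\mathrm{mod}\ \beta^{k})$ with additive characters modulo $\beta^{k}$: the trivial character reproduces the main term $\beta^{-k}L_N$, and each nontrivial character contributes a sum that vanishes over every \emph{complete} Pisano period (exactly so for the $5$-part). The leading digits at the top of each $F_n$ are controlled separately by Weyl's theorem, since $\log_\beta\varphi$ is irrational and hence $\{n\log_\beta\varphi\}$ is equidistributed modulo $1$, giving Benford behavior that itself tends to uniform deep inside the mantissa.

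The hard part, and where I expect the real work to lie, is the bulk of the digits: a place value $i$ in the interior of a typical $F_n$ is too high for the Pisano period $\pi(\beta^{i+k})\asymp\beta^{i}$ to complete across the only $\approx N-i/\log_\beta\varphi$ admissible values of $n\le N$, yet too deep to be reached by the finite resolution that Weyl equidistribution of $\{n\log_\beta\varphi\}$ provides. Both one-sided tools control only about $\log_\beta N$ digits at each end of each number, whereas the interior accounts for a full positive proportion of the $\Theta(N^2)$ digits. Bridging this gap is the crux: I would attack it by upgrading the exact mod-$5^{x}$ equidistribution to equidistribution of $\lfloor F_n/\beta^{i}\rfloor\bmod\beta^{k}$ over the \emph{initial segments} $n\le N$ that fall short of a full period, using a quantitative discrepancy bound for $\{n\log_\beta\varphi\}$ to handle the scale and the regular $\beta$-to-one lifting of Fibonacci residues modulo successive powers of $\beta$ to handle the fine digits, and then summing the resulting character estimates over all scales $i$ to obtain a total error of size $o(L_N)$.
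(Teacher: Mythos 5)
You have set up the problem correctly, but your proposal is not a proof: everything rests on the estimate that you yourself defer to ``the real work,'' and the tools you name cannot supply it. Concretely, for any place value $i$ much larger than $\log_\beta N$ --- and all but a vanishing fraction of the $L_N\asymp N^2$ digits in the prefix sit at such places --- only about $N-i/\log_\beta\varphi\le N$ indices $n\le N$ produce a window at place $i$, while the relevant period $\pi(\beta^{i+k})\asymp\beta^{i+k}$ is far larger. Equidistribution over a \emph{complete} Pisano period --- which is all that Niederreiter's theorem (Lemma~\ref{lemma:Niederreiter}) and Jacobson's counts (Theorem~\ref{Jacobson}) provide --- therefore says nothing about these windows, and your two proposed bridges do not meet in the middle: the discrepancy of $\{n\log_\beta\varphi\}$ concerns the mantissa of $F_n$, hence only its top $O(\log_\beta N)$ digits, and carries no information about $F_n\bmod\beta^{i+k}$ for interior $i$; a character sum over an incomplete period is precisely the quantity for which you have no bound. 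As written, the argument has a hole at its center, and it sits exactly where you predicted.

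What your proposal is missing, relative to the paper, is the reduction that makes this hard analytic problem unnecessary: Pillai's lemma (Lemma~\ref{lemma:Pillai}), that a number is normal in base $\beta$ if and only if it is simply normal in base $\beta^k$ for every $k$. Since every power of $5^x2^y$ is again of the form $5^{x'}2^{y'}$, the paper never counts length-$k$ strings at all; it only proves \emph{simple} normality (single-digit frequencies) in each such base, place value by place value: the stream of $\beta^k$'s-place digits is periodic with period $\pi(\beta^{k+1})$ and is exactly uniformly distributed over each period (Lemma~\ref{lemma:Niederreiter} for the $5$-part, Theorem~\ref{Jacobson} for $k\ge5$ when $2\mid\beta$, combined in Theorem~\ref{thm:UniDistBase5x2y}), and the finitely many non-uniform places are discarded via Theorem~\ref{thm:LimitingFreq}. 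You should be aware, though, that the difficulty you isolated does not evaporate on that route either: passing from ``each place-value stream has limiting digit frequency $1/\beta$'' to ``the interleaved concatenation does'' is an interchange of limits, and for any finite prefix almost all digits come from streams that have not yet completed even one period; the paper disposes of this step with a short qualitative argument in Theorem~\ref{thm:LimitingFreq} rather than with the kind of quantitative bound your program would demand. In short: to follow the paper, adopt the Pillai reduction so that only digit frequencies remain to be controlled; for your direct character-sum attack to succeed, you would need a genuinely new estimate on Fibonacci residues over incomplete periods, which neither you nor the paper provides.
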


For the convenience of the reader, we provide here an outline of the paper and a summary of techniques used:  
\begin{description}
\item [Section 2] Background information on normal numbers and Pisano periods.
\item [Section 3] Connecting the relationship between periodicity of the Fibonacci sequence modulo an integer $\beta$ with the sequences of digits in each place value of the Fibonacci numbers in base $\beta$.
\item [Section 4] Proof that the Fibonacci concatenation is normal in every base of the form $\beta=5^x$.
\item [Section 5] Proof that the Fibonacci concatenation is normal in every base of the form $\beta=2^y$.
\item [Section 6] Combining the results from the previous two sections to show that the Fibonacci concatenation is normal in every base of the form $\beta=5^x2^y$.
\item [Section 7] Computational evidence that the Fibonacci concatenation is in fact normal in every base, thus is \textit{absolutely normal}.
\end{description}

\section{Preliminaries}
There are many definitions of a normal number. The first was given by Borel in 1909 \cite{[Borel]}, but the definition has been refined during the last century. We use the definition given by Davenport and Erdös \cite{[Davenport]}. 
\begin{definition}
Let $x$ be a real number in a base $\beta$. Let $a_1a_2\cdots a_k$ be any string of length $k$ written in base $\beta$. Let $N(t)$ denote the number of times that this string occurs among the first $t$ digits of $x$. Then, $x$ is \textit{normal} in $\beta$ if
\[\lim_{t\to\infty}\frac{N(t)}{t}=\frac{1}{\beta^k}.\]
In the case where $k=1$, we say $x$ is \textit{simply normal} in base $\beta$ if
\[\lim_{t\to\infty}\frac{N(t)}{t}=\frac{1}{\beta}.\]
Finally, a number is \textit{absolutely normal} if it is normal in every base.
\end{definition}

More plainly, a number is \textit{normal} in a base $\beta$ if every string of length $k$ occurs with a limiting relative frequency of $1/\beta^k$ and is \textit{simply normal} if each digit occurs with a limiting relative frequency of $1/\beta$. The equivalent definition of \textit{normal} that was given by Borel \cite{[Borel]} in 1909 was further refined by Pillai \cite{[Pillai]} in 1940:

\begin{lemma}[Pillai]\label{lemma:Pillai}
A number is \textit{normal} in base $\beta$ if and only if it is simply normal in base $\beta^k$ for all positive integers $k$.
\end{lemma}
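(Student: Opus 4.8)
The plan is to prove the two implications separately. Throughout, write $x = 0.d_1 d_2 d_3 \cdots$ in base $\beta$ and recall that the digits of $x$ in base $\beta^k$ are exactly the length-$k$ blocks $d_1\cdots d_k,\ d_{k+1}\cdots d_{2k},\ \dots$ read off at the aligned positions $1, k+1, 2k+1, \dots$. Thus ``simply normal in base $\beta^k$'' says precisely that each of the $\beta^k$ possible length-$k$ blocks occurs with frequency $\beta^{-k}$ when one counts only at these aligned positions, whereas ``normal in base $\beta$'' (via the definition above) asks that every block occur with the right frequency when counted at \emph{all} positions. The bridge between the two is a block-counting argument with an explicit boundary estimate, and I would first carry out the direction the paper actually uses.

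For the implication that simple normality in every $\beta^k$ forces normality in base $\beta$, fix a target block $B$ of length $m$ and let $N_B(T)$ count its occurrences among the first $T$ base-$\beta$ digits. For each $K \ge m$, group the digits into aligned $K$-blocks, i.e.\ into base-$\beta^K$ digits, and invoke simple normality in base $\beta^K$. Each occurrence of $B$ is either \emph{internal} to one such $K$-block or \emph{straddles} a boundary between two consecutive blocks. Counting internally, a value $v \in \{0, \dots, \beta^K - 1\}$ contributes the number $g(v)$ of the $K-m+1$ internal start positions at which $B$ appears; since $\sum_v g(v) = (K-m+1)\beta^{K-m}$ while each $v$ occurs with frequency tending to $\beta^{-K}$, the internal occurrences number $\sim \tfrac{K-m+1}{K}\,\beta^{-m}\,T$, and the straddling occurrences number at most $\tfrac{m-1}{K}\,T$. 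Hence for every fixed $K$ the limit points of $N_B(T)/T$ are trapped in $\left[\tfrac{K-m+1}{K}\beta^{-m},\ \tfrac{K-m+1}{K}\beta^{-m} + \tfrac{m-1}{K}\right]$; letting $K \to \infty$ squeezes this to the single value $\beta^{-m}$, which is normality in base $\beta$.

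Conversely, assume $x$ is normal in base $\beta$. Normality immediately gives that a length-$k$ block $B$ occurs with frequency $\beta^{-k}$ when counted over \emph{all} starting positions, and decomposing these positions into the $k$ residue classes modulo $k$ shows that the \emph{average} of the block frequencies along the classes equals $\beta^{-k}$. Simple normality in base $\beta^k$ asks for more: that the single aligned class realize $\beta^{-k}$ on its own. I expect this to be the main obstacle, because the collection of all-position block frequencies does not by itself detect the alignment—one can redistribute occurrences among residue classes without changing any all-position count, so the averaging above is not enough. To preclude such a periodic bias I would use the full strength of the hypothesis, namely that blocks of \emph{every} length have the correct all-position frequency, which forces the digit stream to decorrelate across the alignment; concretely this reduces to the known equivalence of normality to base $\beta$ and to base $\beta^k$ for multiplicatively dependent bases. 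I anticipate this decorrelation step, rather than the boundary bookkeeping and the double-limit justification of the forward direction, to be the technically delicate heart of the argument.
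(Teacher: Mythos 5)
Note first that the paper offers no proof of this lemma at all---it is quoted from Pillai's 1940 paper---so there is no internal argument to compare against, and your proposal must be judged on its own. Judged that way, it is half complete. The direction you prove in detail (simple normality in base $\beta^K$ for every $K$ implies normality in base $\beta$) is correct: splitting each occurrence of a length-$m$ string into occurrences internal to an aligned $K$-block versus occurrences straddling a block boundary, the count $\sum_v g(v) = (K-m+1)\beta^{K-m}$, the trapping of all limit points of $N_B(T)/T$ in $\left[\tfrac{K-m+1}{K}\beta^{-m},\ \tfrac{K-m+1}{K}\beta^{-m}+\tfrac{m-1}{K}\right]$, and the squeeze as $K\to\infty$ constitute the standard argument, and your bookkeeping is right. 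Usefully, this is also the \emph{only} direction the paper ever invokes: normality in bases $5^x$, $2^y$, and $5^x2^y$ is in each case deduced from simple normality in all powers of the base.

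The genuine gap is in the converse (``normal in base $\beta$ implies simply normal in base $\beta^k$''). You correctly diagnose the obstruction---all-position frequencies control only the \emph{average} over the $k$ residue classes of starting positions, so one must rule out a periodic bias among the classes---but at the critical moment you do not give an argument: you appeal to ``the known equivalence of normality to base $\beta$ and to base $\beta^k$ for multiplicatively dependent bases.'' That equivalence (Maxfield's theorem, which rests on the Niven--Zuckerman theorem that all-position block frequencies force the correct aligned block frequencies) is a result at least as deep as the lemma you are trying to prove; invoking it is not materially different from the paper's own citation of Pillai, and depending on which proof of the black box one selects it can be outright circular, since standard proofs of base-change equivalences pass through exactly this aligned-versus-unaligned comparison. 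A self-contained proof of this direction requires real work: either the Niven--Zuckerman counting argument (comparing frequencies of long blocks whose length is a large multiple of $k$ to force each residue class individually to its mean), or Wall's characterization of normality as uniform distribution of $(\beta^n x)$ modulo one combined with an argument that the subsequence $(\beta^{kn}x)$ is also uniformly distributed. As written, the ``only if'' half of the equivalence is asserted, not proven.
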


Borel \cite{[Borel]}  showed that in every base, almost every number is normal, in the sense that non-normal numbers have Lebesgue measure $0$. Despite their overwhelming population on the real line, mathematicians have yet to prove that any of the commonplace mathematical constants like $\pi$, $e$, $\sqrt{2}$, $\phi$ etc. are normal. Because of this, the earliest examples of normal numbers were  concatenations of well known sequences.  Champernowne \cite{[Champernowne]} showed the concatenation of the natural numbers is normal in base $10$. Other early examples of normal numbers in base $10$ include the concatenation of the square numbers (Besicovitch \cite{[Besicovitch]}) and the concatenation of the primes (Copeland and Erd\H{o}s \cite{[Copeland]}).
\begin{align}
\nonumber &\text{Champernowne}&.123456789101112131415161718192021222324252\ldots\\
\nonumber &\text{Besicovitch}&.149162536496481100121144169196225256289324\ldots\\
\nonumber &\text{Copeland \& Erd\H{o}s}&.235711131719232931374143475359616771737983\ldots
\end{align}

We consider here the concatenation of the Fibonacci sequence  behind a decimal. Let $\left(F_{n,\beta}\right)$ denote the Fibonacci sequence in base $\beta$ and let $\left<\left(F_{n,\beta}\right)\right>$ denote its concatenation behind a decimal. 
\begin{example*}
In bases $2,\ldots,10$, the concatenation of the Fibonacci sequence is given by the following:
\begin{align}
\nonumber \left<\left(F_{n,2}\right)\right>=&.01110111011000110110101100010110111101100110010000111\ldots\\
\nonumber \left<\left(F_{n,3}\right)\right>=&.01121012221112101021200110022121002212211122221112111\ldots\\
\nonumber \left<\left(F_{n,4}\right)\right>=&.01123112031111202313112121003221113212120233123120331\ldots\\
\nonumber \left<\left(F_{n,5}\right)\right>=&.01123101323411142103241034141330024420124222234240314\ldots\\
\nonumber \left<\left(F_{n,6}\right)\right>=&.01123512213354131225400102514252454432311221155443120\ldots\\
\nonumber \left<\left(F_{n,7}\right)\right>=&.01123511163046106155264452104615312610444110351151222\ldots\\
\nonumber \left<\left(F_{n,8}\right)\right>=&.01123510152542671312203515711142173330755030101251515\ldots\\
\nonumber \left<\left(F_{n,9}\right)\right>=&.01123581423376110817027845874713162164348156551024616\ldots\\
\nonumber \left<\left(F_{n,10}\right)\right>=&.01123581321345589144233377610987159725844181676510946\ldots
\end{align}
\end{example*}
There are periodic patterns in the Fibonacci sequence for every integer modulus. In 1877, Lagrange \cite{[Lagrange]} noted that the Fibonacci sequence modulo $10$ repeats every $60$ terms. Several specific results for the periodicity of the Fibonacci sequence modulo various integers followed, culminating in the following theorem of  Wall from 1960~\cite{[Wall]}.

\begin{theorem}[Wall]
For every positive integer $m$, the Fibonacci sequence is periodic modulo $m$.
\end{theorem}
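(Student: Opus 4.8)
The plan is to argue via the pigeonhole principle applied to consecutive pairs, together with the reversibility of the Fibonacci recurrence. First I would consider the sequence of ordered pairs $(F_n \bmod m,\ F_{n+1}\bmod m)$ for $n = 0, 1, 2, \ldots$, each of which lives in the finite set $(\ZZ/m\ZZ)^2$ of size $m^2$. Since there are only finitely many such pairs, among the first $m^2 + 1$ of them two must coincide: there exist indices $i < j$ with $(F_i, F_{i+1}) \equiv (F_j, F_{j+1}) \pmod m$.

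Next I would propagate this coincidence in both directions using the recurrence. Going forward, the relation $F_{n+1} = F_n + F_{n-1}$ gives $F_{i+k} \equiv F_{j+k}\pmod m$ for all $k \geq 0$ by an immediate induction, so the sequence is at least \emph{eventually} periodic with period $j - i$. To upgrade this to pure periodicity---repetition starting from $F_0$ itself---I would use the backward recurrence $F_{n-1} = F_{n+1} - F_n$, which lets the coincidence propagate to \emph{smaller} indices as well. Applying this step $i$ times pushes the agreement back to index $0$, yielding $(F_0, F_1) \equiv (F_{j-i}, F_{j-i+1}) \pmod m$ and hence periodicity of the entire sequence with period dividing $j - i$.

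A cleaner way to package the argument, which I would likely present instead, is to observe that the transition map $(a, b) \mapsto (b,\ a+b)$ on $(\ZZ/m\ZZ)^2$ is linear with matrix $\left(\begin{smallmatrix} 0 & 1 \\ 1 & 1 \end{smallmatrix}\right)$, whose determinant is $-1$, a unit modulo every $m$. Hence this map is a bijection of the finite set $(\ZZ/m\ZZ)^2$, so the orbit of the initial state $(F_0, F_1) = (0, 1)$ is necessarily a cycle rather than an eventually periodic ``rho'' shape. That is precisely the statement of pure periodicity, and it even shows the period is the order of this matrix in $GL_2(\ZZ/m\ZZ)$.

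The only genuine subtlety---and the step I would be most careful about---is the passage from eventual periodicity to pure periodicity. The pigeonhole step alone guarantees merely that the sequence eventually repeats; ruling out a nonrepeating pre-period is exactly what the invertibility of the recurrence (equivalently, the determinant being a unit mod $m$) supplies. Everything else reduces to a routine induction on the recurrence.
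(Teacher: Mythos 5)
Your proof is correct. One thing to be aware of: the paper does not actually prove this theorem at all---it is stated as a cited background result from Wall's 1960 paper, so there is no internal proof to compare against. Your argument is the standard self-contained one: pigeonhole on the pairs $(F_n, F_{n+1})$ in $(\ZZ/m\ZZ)^2$ forces a repetition, the forward recurrence propagates it to eventual periodicity, and the backward recurrence $F_{n-1}=F_{n+1}-F_n$ (equivalently, invertibility of the transition matrix $\left(\begin{smallmatrix} 0 & 1 \\ 1 & 1\end{smallmatrix}\right)$, whose determinant $-1$ is a unit modulo every $m$) upgrades this to pure periodicity. You are right that this last step is the genuine content: it is exactly what the paper needs, since the introduction asserts \emph{pure} periodicity and the definition of the Pisano period $\pi(m)$ as the least $k$ with $F_k\equiv 0$ and $F_{k+1}\equiv 1 \pmod m$ only measures the period length if the repetition starts at $n=0$. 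Your side remark that the Pisano period equals the order of the matrix in $GL_2(\ZZ/m\ZZ)$ is also true (it follows from $F_{k-1}=F_{k+1}-F_k$ applied to $A^k=\left(\begin{smallmatrix} F_{k-1} & F_k \\ F_k & F_{k+1}\end{smallmatrix}\right)$), though it is not needed for the statement.
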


The length of this period is known as the \textit{Pisano period} and is denoted $\pi(m)$. Pisano periods have been generalized for Lucas numbers, Pell numbers, $(a,b)$-Fibonacci numbers, and $n$-step Fibonacci numbers. For our purpose, we are only concerned with the Fibonacci sequence, and so will use the following definition.
\begin{definition}
The \textit{Pisano period} of a positive integer $m$, denoted $\pi(m)$, is the smallest number $k>0$ such that $F_k\equiv0\mod{m}$ and $F_{k+1}\equiv1\mod{m}$.
\[\text{-Equivalently-}\]
The Pisano period $\pi(m)$ is the length of one (shortest) period of the Fibonacci sequence modulo~$m$.
\end{definition}
\begin{example*} The sequence of Pisano periods can be found at OEIS: A001175 \cite{[A001175]}. Below are the first few Pisano periods up to $20$. 
\begin{table}[H]
\centering
\begin{tabular}{c|c|c|c|c|c|c|c|c|c|c|c|c|c|c|c|c|c|c|c}
$m$&2&3&4&5&6&7&8&9&10&11&12&13&14&15&16&17&18&19&20\\
\hline
$\pi(m)$&3&8&6&20&24&16&12&24&60&10&24&28&48&40&24&36&24&18&60\\
\end{tabular}
\caption{Pisano period of the first $20$ integers.}
\end{table}
\end{example*}
Notice that the only odd integer that is a Pisano period is $\pi(2)=3$. Combining this with the results of Stanley \cite{[Stanley]} and Ehrlich \cite{[Ehrlich]}, Renault \cite{[Renault]} showed the following:
\begin{proposition}[Renault]
For every even integer $n>4$, there is an integer $m$ such that $\pi(m)=n$.
\end{proposition}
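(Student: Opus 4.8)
The plan is to realize each even $n>4$ as the Pisano period of a \emph{single} prime or prime‑power modulus, so that the only real input is a case split according to $n\bmod 8$. The key invariant is the \emph{rank of apparition} $\alpha(p)$, the least $k\ge 1$ with $p\mid F_k$. Writing $M=\left(\begin{smallmatrix}1&1\\1&0\end{smallmatrix}\right)$, the identity $M^{k}=\left(\begin{smallmatrix}F_{k+1}&F_{k}\\F_{k}&F_{k-1}\end{smallmatrix}\right)$ gives, at $k=\alpha(p)$, the congruence $M^{\alpha(p)}\equiv F_{\alpha(p)+1}\,I\pmod p$ (using $F_{\alpha(p)-1}\equiv F_{\alpha(p)+1}$, since $p\mid F_{\alpha(p)}$). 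As $\pi(p)$ is the multiplicative order of $M$ modulo $p$, this yields $\pi(p)=\alpha(p)\cdot\beta(p)$, where $\beta(p)$ is the order of the unit $F_{\alpha(p)+1}$ in $(\ZZ/p\ZZ)^\times$. The Cassini identity $F_{k-1}F_{k+1}-F_k^2=(-1)^k$ then gives $F_{\alpha(p)+1}^2\equiv(-1)^{\alpha(p)}\pmod p$, so $\beta(p)=4$ when $\alpha(p)$ is odd (there $F_{\alpha(p)+1}$ is a primitive fourth root of unity), while $F_{\alpha(p)+1}\equiv\pm1$ when $\alpha(p)$ is even. Note this derivation needs no diagonalization of $M$, so the ramified prime $p=5$ is covered uniformly.

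I would next pin the ratio down exactly: $\beta(p)=1$ if $\alpha(p)\equiv 2\pmod 4$, $\beta(p)=2$ if $\alpha(p)\equiv 0\pmod 4$, and $\beta(p)=4$ if $\alpha(p)$ is odd. The two even cases reduce to fixing the sign of $F_{\alpha(p)+1}\equiv\pm1$, which via $L_{\alpha(p)}\equiv 2F_{\alpha(p)+1}$ and $L_k^2-5F_k^2=4(-1)^k$ amounts to establishing $L_{\alpha(p)}\equiv 2(-1)^{\alpha(p)/2+1}\pmod p$. To produce moduli with prescribed $\alpha$, I invoke Carmichael's primitive prime divisor theorem: for every index $k\notin\{1,2,6,12\}$ the number $F_k$ has a prime factor $p$ dividing no earlier Fibonacci number, and such a $p$ satisfies $\alpha(p)=k$.

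The construction then splits by $n\bmod 8$. If $n\equiv 2\pmod 4$ and $n\ge 10$, take $p$ a primitive prime divisor of $F_n$: then $\alpha(p)=n\equiv 2\pmod 4$, so $\beta(p)=1$ and $\pi(p)=n$. If $n\equiv 4\pmod 8$ and $n\ge 20$, take $p$ a primitive prime divisor of $F_{n/4}$, whose index $n/4$ is odd and at least $5$: then $\beta(p)=4$ and $\pi(p)=4\cdot(n/4)=n$. If $n\equiv 0\pmod 8$ and $n\ne 24$, take $p$ a primitive prime divisor of $F_{n/2}$, whose index $n/2$ is divisible by $4$: then $\beta(p)=2$ and $\pi(p)=2\cdot(n/2)=n$. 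In each branch the relevant index avoids $\{1,2,6,12\}$, so Carmichael's theorem applies. The finitely many leftover values $n\in\{6,12,24\}$—corresponding exactly to the anomalous indices $F_6$, $F_3=2$, and $F_{12}$—are handled directly by powers of two, since $\pi(2^j)=3\cdot 2^{j-1}$ gives $\pi(4)=6$, $\pi(8)=12$, and $\pi(16)=24$.

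The main obstacle is the exact determination of $\beta(p)$ for even $\alpha(p)$, i.e. fixing the sign $F_{\alpha(p)+1}\equiv(-1)^{\alpha(p)/2+1}\pmod p$ uniformly; this is precisely what lets the $n\equiv 2\pmod 4$ case succeed with a single prime rather than forcing $\pi(p)=2n$. Everything else is bookkeeping: verifying that the three residue classes together with the three exceptional values exhaust the even integers exceeding $4$, and checking that the indices $n$, $n/4$, $n/2$ used above steer clear of Carmichael's four forbidden subscripts.
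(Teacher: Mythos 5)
You should know at the outset that the paper does not prove this proposition at all: it is quoted from Renault, who assembled it from earlier results of Stanley and Ehrlich, so your attempt has to stand on its own rather than be measured against an argument in the text. Your skeleton is sound. The decomposition $\pi(p)=\alpha(p)\cdot\beta(p)$, with $\beta(p)$ the multiplicative order of $F_{\alpha(p)+1}$ modulo $p$, is correct (it follows from $M^{\alpha(p)}\equiv F_{\alpha(p)+1}I$ together with the fact that $p\mid F_t$ forces $\alpha(p)\mid t$); the mod-$8$ case split together with the leftovers $n\in\{6,12,24\}$ does exhaust the even integers above $4$; the values $\pi(4)=6$, $\pi(8)=12$, $\pi(16)=24$ agree with the paper's table of Pisano periods and with $\pi(2^j)=3\cdot 2^{j-1}$, which the paper derives from Wall's criterion; and spot checks confirm the branches (e.g.\ $n=10$ gives $p=11$ with $\pi(11)=10$, $n=16$ gives $p=7$ with $\pi(7)=16$, $n=28$ gives $p=13$ with $\pi(13)=28$). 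One small imprecision: your closing sentence attributes all three exceptional values to Carmichael's forbidden subscripts, but the index $3$ used for $n=12$ is \emph{not} a Carmichael exception ($F_3=2$ has the primitive prime divisor $2$); the real obstruction there is that the prime $p=2$ violates your rule ``odd rank implies $\beta(p)=4$,'' since $-1\equiv 1\pmod 2$ and in fact $\pi(2)=3=\alpha(2)$. Your indices $n\geq 10$, $n/4\geq 5$, $n/2\geq 4$ never produce $p=2$, so this costs nothing, but the reason should be stated correctly.

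The genuine gap is the step you yourself flag as ``the main obstacle'': fixing the sign of $F_{\alpha(p)+1}\equiv\pm 1$ when $\alpha(p)$ is even. The identities you invoke there, $L_{\alpha}\equiv 2F_{\alpha+1}$ and $L_k^2-5F_k^2=4(-1)^k$, only reproduce Cassini's $F_{\alpha+1}^2\equiv 1$; they are sign-blind, and without the sign the case $n\equiv 2\pmod 4$ (where you need $\beta(p)=1$ rather than $2$) collapses, exactly as you observe. The gap is real but closable with two standard identities you did not use: write $\alpha(p)=2m$; from $F_{2m}=F_mL_m$ and the minimality of the rank we get $p\nmid F_m$, hence $p\mid L_m$; then $L_{2m}=L_m^2-2(-1)^m$ gives $L_{\alpha}\equiv -2(-1)^m\pmod p$, and combining with $L_{\alpha}\equiv 2F_{\alpha+1}$ yields $F_{\alpha+1}\equiv(-1)^{m+1}\pmod p$ for odd $p$. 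That is precisely your claimed $F_{\alpha(p)+1}\equiv(-1)^{\alpha(p)/2+1}$, giving $\beta(p)=1$ when $\alpha(p)\equiv 2\pmod 4$ and $\beta(p)=2$ when $\alpha(p)\equiv 0\pmod 4$. This sign lemma is Vinson's theorem (Vinson is cited in the paper for related facts), and once it is inserted your argument is a complete and correct proof of the proposition.
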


A property that we will use frequently is the \textit{nesting property} of Pisano periods. For every positive integer $n$, the Pisano period of $n$ divides the Pisano period of every multiple of $n$.
\begin{lemma}[Wall \cite{[Wall]}]\label{nesting}
 For any two positive integers $m,n$, if $m\mid n$ then $\pi(m)\mid\pi(n)$.
\end{lemma}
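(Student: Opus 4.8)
The plan is to recast the statement as a fact about the multiplicative order of the Fibonacci $Q$-matrix, and then exploit the containment of congruences induced by $m \mid n$. Write $Q = \begin{pmatrix} 1 & 1 \\ 1 & 0 \end{pmatrix}$, so that $Q^k = \begin{pmatrix} F_{k+1} & F_k \\ F_k & F_{k-1} \end{pmatrix}$ for every $k \ge 1$ (a standard induction on $k$). The value of this reformulation is that the two defining congruences of the Pisano period collapse into a single matrix congruence: if $F_k \equiv 0$ and $F_{k+1} \equiv 1 \pmod m$, then $F_{k-1} = F_{k+1} - F_k \equiv 1 \pmod m$ as well, so $Q^k \equiv I \pmod m$; conversely $Q^k \equiv I \pmod m$ immediately returns $F_k \equiv 0$ and $F_{k+1} \equiv 1 \pmod m$. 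Thus ``$k$ is a period of the Fibonacci sequence modulo $m$'' is \emph{equivalent} to ``$Q^k \equiv I \pmod m$,'' and $\pi(m)$ is exactly the multiplicative order of $Q$ in the group $\mathrm{GL}_2(\ZZ/m\ZZ)$. Note that $\det Q = -1$ is a unit modulo $m$, so $Q$ is genuinely invertible mod $m$ and has finite order in this finite group.

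With this translation in hand I would carry out two short steps. First, set $k = \pi(n)$, so that $Q^{\pi(n)} \equiv I \pmod n$. Because $m \mid n$, reducing each matrix entry modulo $m$ preserves the congruence: $n \mid \left(Q^{\pi(n)} - I\right)_{ij}$ forces $m \mid \left(Q^{\pi(n)} - I\right)_{ij}$ for all entries, and hence $Q^{\pi(n)} \equiv I \pmod m$. Second, I would invoke the standard group-theoretic fact that in any group an element $g$ satisfies $g^k = e$ if and only if the order of $g$ divides $k$. Applying this to $g = Q$ inside $\mathrm{GL}_2(\ZZ/m\ZZ)$, the congruence $Q^{\pi(n)} \equiv I \pmod m$ yields $\pi(m) \mid \pi(n)$, which is precisely the claim.

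The only genuinely delicate point — and the one I would take care to justify rather than wave past — is the second step: that every period modulo $m$ is a \emph{multiple} of the minimal period $\pi(m)$, not merely that $\pi(n)$ happens to be some period. This is exactly where the invertibility of $Q$ modulo $m$ earns its keep. It guarantees that the Fibonacci sequence modulo $m$ is purely periodic and that the set $\{k \ge 1 : Q^k \equiv I \pmod m\}$ coincides with the set of positive multiples of $\pi(m)$ (the order of the cyclic subgroup generated by $Q$), ruling out any spurious period that fails to be divisible by $\pi(m)$. Everything else is routine: the matrix power identity is an elementary induction, and the descent from congruences modulo $n$ to congruences modulo $m$ is immediate from $m \mid n$.
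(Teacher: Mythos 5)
Your proof is correct and complete. Note, however, that the paper itself gives no proof of this lemma at all: it is stated as a known result and attributed to Wall's 1960 paper, so there is no internal argument to compare against. Your route --- identifying $\pi(m)$ with the multiplicative order of the matrix $Q = \left(\begin{smallmatrix} 1 & 1 \\ 1 & 0 \end{smallmatrix}\right)$ in $\mathrm{GL}_2(\ZZ/m\ZZ)$, then reducing $Q^{\pi(n)} \equiv I \pmod n$ down to the modulus $m$ --- is a clean, self-contained substitute. Wall's original argument reaches the same conclusion at the level of sequences: the Fibonacci sequence modulo $m$ is purely periodic, the reduction modulo $m$ of the sequence modulo $n$ shows that $\pi(n)$ is \emph{some} period modulo $m$, and every period of a purely periodic sequence is a multiple of the minimal one. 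The two proofs pivot on the same crucial fact, which you correctly flagged as the delicate step: that the set of all periods equals the set of multiples of the minimal period. Your formulation buys this as a one-line consequence of the order of an element in a finite group (legitimized by $\det Q = -1$ being a unit), whereas the sequence-level version requires a short division-algorithm argument plus pure periodicity; this is precisely why the matrix packaging is the tidier of the two. One trivial remark: your reformulation also silently handles the degenerate case $m = 1$, where the group is trivial and $\pi(1) = 1$ divides everything.
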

For our purpose, we only need the nesting property for powers of the base we are considering. Given an integer $m$,  $\pi(\beta^i)\mid\pi(\beta^{i+1})$. 
\begin{example}
Following the investigation of Legrange, Geller \cite{[Geller]}  observes the nesting of powers of 10:

\[
\pi(10)=60, \qquad \pi(100)=300, \qquad \pi(1000)=1500, \qquad \pi(10000)=15000, \ldots 
\]
\end{example}
This pattern was later generalized by Jarden~\cite{[Jarden]} into the following proposition:

\begin{proposition}[Jarden]
The Fibonacci sequence is periodic modulo powers of ten with the following periods:
\[\pi(10^k)=\begin{cases}
60&k=1\\
300&k=2\\
15\cdot 10^{k-1}&k\vargeq3\qquad .
\end{cases}\]
\end{proposition}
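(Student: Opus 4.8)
The plan is to reduce the computation of $\pi(10^k)$ to the two prime-power cases $\pi(2^k)$ and $\pi(5^k)$, to pin down each of those by a lifting argument, and then to recombine. First I would establish the multiplicativity rule that for coprime positive integers $m$ and $n$,
\[\pi(mn) = \mathrm{lcm}\bigl(\pi(m),\pi(n)\bigr).\]
One inclusion is immediate from the nesting property (Lemma \ref{nesting}): since $m \mid mn$ and $n \mid mn$, both $\pi(m)$ and $\pi(n)$ divide $\pi(mn)$, hence $\mathrm{lcm}\bigl(\pi(m),\pi(n)\bigr) \mid \pi(mn)$. For the reverse, set $L = \mathrm{lcm}\bigl(\pi(m),\pi(n)\bigr)$; then $F_L \equiv 0$ and $F_{L+1}\equiv 1$ modulo both $m$ and $n$, and by the Chinese Remainder Theorem the same two congruences hold modulo $mn$, forcing $\pi(mn)\mid L$. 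Applying this to $10^k = 2^k 5^k$ yields $\pi(10^k) = \mathrm{lcm}\bigl(\pi(2^k),\pi(5^k)\bigr)$, so it suffices to understand powers of $2$ and powers of $5$ separately.

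Next I would compute the two prime-power periods using Wall's lifting principle: \emph{if $p$ is prime and $\pi(p^2)\neq\pi(p)$, then $\pi(p^k)=p^{k-1}\pi(p)$ for every $k\geq 1$}. The cleanest route to this is the companion matrix $M = \begin{pmatrix} 1 & 1 \\ 1 & 0 \end{pmatrix}$, which satisfies $M^n = \begin{pmatrix} F_{n+1} & F_n \\ F_n & F_{n-1} \end{pmatrix}$, so that $\pi(m)$ equals the multiplicative order of $M$ in $GL_2(\ZZ/m\ZZ)$. Writing $s=\pi(p)$ and $M^s = I + pB$ with $B \not\equiv 0 \pmod p$ (which is precisely the hypothesis $\pi(p^2)\neq\pi(p)$, equivalently $F_s\not\equiv 0 \pmod{p^2}$), one shows by induction via the binomial expansion $(I+p^{j}C)^p = I + p^{j+1}C + \binom{p}{2}p^{2j}C^2 + \cdots$ that $M$ has order exactly $p^{k-1}s$ modulo $p^k$. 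I would then verify the hypothesis directly in our two cases: for $p=2$ we have $s=3$ and $F_3 = 2\not\equiv 0\pmod 4$, while for $p=5$ we have $s=20$ and $F_{20}=6765\equiv 15\not\equiv 0\pmod{25}$. This gives $\pi(2^k)=3\cdot 2^{k-1}$ and $\pi(5^k)=20\cdot 5^{k-1}=2^2\cdot 5^k$.

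Finally I would combine the pieces by a direct least-common-multiple computation. Comparing prime factorizations,
\[\pi(10^k) = \mathrm{lcm}\bigl(3\cdot 2^{k-1},\, 2^2\cdot 5^k\bigr) = 3\cdot 2^{\max(k-1,\,2)}\cdot 5^k.\]
The exponent $\max(k-1,2)$ is what produces the three cases: for $k=1$ it gives $3\cdot 2^2\cdot 5 = 60$, for $k=2$ it gives $3\cdot 2^2\cdot 5^2 = 300$, and for $k\geq 3$ it gives $3\cdot 2^{k-1}\cdot 5^k = 15\cdot 10^{k-1}$, matching the claimed formula.

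The main obstacle is the lifting lemma, not the bookkeeping around it. The reduction to prime powers and the final $\mathrm{lcm}$ split are routine once that lemma is in hand, but proving $\pi(p^k)=p^{k-1}\pi(p)$ requires the careful order-lifting argument sketched above. The delicate point is the prime $p=2$, where the term $\binom{p}{2}p^{2j}C^2$ no longer automatically vanishes at the expected $p$-adic level, so the inductive step must be checked by hand (consistent with the verified values $\pi(2)=3$, $\pi(4)=6$, $\pi(8)=12$, $\pi(16)=24$). Alternatively, since Wall's theorem is already invoked in this paper, one may cite his lifting result outright and simply record the two base-case verifications $F_3\not\equiv 0\pmod 4$ and $F_{20}\not\equiv 0 \pmod{25}$.
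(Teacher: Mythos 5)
Your proof is correct, and the arithmetic checks out at every stage: $\pi(2^k)=3\cdot 2^{k-1}$, $\pi(5^k)=2^2\cdot 5^k$, and $\mathrm{lcm}\bigl(3\cdot 2^{k-1},\,2^2\cdot 5^k\bigr)=3\cdot 2^{\max(k-1,2)}\cdot 5^k$ does reproduce the three cases $60$, $300$, $15\cdot 10^{k-1}$. Note, however, that the paper never proves this proposition at all --- it is stated as a citation to Jarden --- so the honest comparison is with Section~\ref{sec:MainThm}, where the authors compute $\pi(5^x2^y)$ for the Main Theorem. There they use precisely your skeleton: Wall's lifting criterion (Lemma~\ref{lemma:WallCriteria}, with $t=1$ since $2$ and $5$ are not Wall-Sun-Sun primes) to get the prime-power periods, and Wall's LCM lemma (Lemma~\ref{lemma:lcm}) to recombine. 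So your reduction and recombination are the ``in-paper'' argument specialized to $x=y=k$; what you add beyond the paper is a self-contained proof of both ingredients, namely the CRT argument for $\pi(mn)=\mathrm{lcm}(\pi(m),\pi(n))$ (which also follows from Lemma~\ref{lemma:lcm}, and whose easy half is Lemma~\ref{nesting}) and the matrix order-lifting proof of $\pi(p^e)=p^{e-1}\pi(p)$. On that last point your caution about $p=2$ is warranted and correctly diagnosed: the step $(I+2C)^2=I+4(C+C^2)$ can lose the nonvanishing of the cofactor, so the induction must start one level higher (e.g.\ from $M^6=I+4D$ with $D\equiv I\pmod 2$) or, as you suggest, one simply cites Wall's lemma as the paper already does and records only the two base-case verifications $F_3=2\not\equiv 0\pmod 4$ and $F_{20}=6765\equiv 15\pmod{25}$. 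Either way the argument is complete.
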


From the Lemma~\ref{nesting} it follows that for a positive prime $p$, $\pi(p^i)\varleq\pi(p^{i+1})$. We can refine this expression to strict inequality, provided the prime $p$ is not a Wall-Sun-Sun prime.

\begin{definition}[Wall, Sun, Sun]
A Wall-Sun-Sun prime is a prime number $p$ such that $\pi(p)=\pi(p^2)$.
\end{definition}
Although there are conjectured to be infinitely many Wall-Sun-Sun primes \cite{[Klaska]}, none have been found. If it can be shown that Wall-Sun-Sun primes do not exist, that would imply Fermat's last theorem \cite{[Sun-Sun]}. Wall \cite{[Wall]} proposed the following lemma that tests whether a candidate integer can be a Wall-Sun-Sun prime. 
\begin{lemma}[Wall]\label{lemma:WallCriteria}
Given a prime $p$, let $t$ be the largest integer such that $\pi(p^t)=\pi(p)$. Then for all $e\vargeq t$, $\pi(p^e)=p^{e-t}\pi(p)$.
\end{lemma}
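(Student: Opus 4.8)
The plan is to recast the statement in terms of the Fibonacci $Q$-matrix $Q=\begin{pmatrix}1&1\\1&0\end{pmatrix}$, for which the standard identity $Q^n=\begin{pmatrix}F_{n+1}&F_n\\F_n&F_{n-1}\end{pmatrix}$ holds. Since $F_{k+1}\equiv1$ and $F_k\equiv0\pmod m$ force $F_{k-1}=F_{k+1}-F_k\equiv1\pmod m$, the congruence $Q^k\equiv I\pmod m$ is equivalent to the defining condition of the Pisano period. Thus $\pi(m)$ is exactly the multiplicative order of $Q$ in $GL_2(\ZZ/m\ZZ)$, and the lemma becomes a statement about how the order of a fixed matrix grows as the modulus ranges over the powers $p^e$. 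Throughout I write $\pi=\pi(p)$ and let $v_p$ denote the $p$-adic valuation of an integer matrix, i.e. the least valuation among its entries.

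Next I would isolate the role of $t$. Because $\pi(p^t)=\pi$ but $\pi(p^{t+1})\ne\pi$, the matrix $Q^{\pi}$ satisfies $Q^{\pi}\equiv I\pmod{p^t}$ and $Q^{\pi}\not\equiv I\pmod{p^{t+1}}$; equivalently $Q^{\pi}=I+p^{t}A$ with $A\not\equiv0\pmod p$, so that $v_p(Q^{\pi}-I)=t$. The heart of the argument is the ascent claim that raising to the $p$-th power increases this valuation by exactly one: if $g=I+p^{s}A$ with $s\ge1$ and $A\not\equiv0\pmod p$, then $g^{p}=I+p^{s+1}A'$ with $A'\not\equiv0\pmod p$. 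This follows from the binomial expansion $g^{p}=I+\binom{p}{1}p^{s}A+\binom{p}{2}p^{2s}A^{2}+\cdots+p^{ps}A^{p}$: the linear term has valuation exactly $s+1$, while (for $p$ odd, or $p=2$ with $s\ge2$) every later term is divisible by $p^{s+2}$, so $A'\equiv A\pmod p$ and in particular $A'\not\equiv0\pmod p$. The one delicate case, $p=2$ with $s=1$, is addressed below.

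Iterating the ascent claim from $g=Q^{\pi}$ gives $v_p\bigl(Q^{\pi p^{j}}-I\bigr)=t+j$ for all $j\ge0$, which is the key quantitative input. To finish, fix $e\ge t$ and set $j=e-t$. On one hand, $v_p\bigl(Q^{\pi p^{e-t}}-I\bigr)=e$ means $Q^{\pi p^{e-t}}\equiv I\pmod{p^{e}}$, so $\pi(p^{e})\mid \pi p^{e-t}$. On the other hand, the nesting property (Lemma~\ref{nesting}) gives $\pi\mid\pi(p^{e})$, hence $\pi(p^{e})=\pi\,p^{i}$ for some $0\le i\le e-t$. If $i<e-t$ then $v_p\bigl(Q^{\pi p^{i}}-I\bigr)=t+i<e$, contradicting $Q^{\pi p^{i}}\equiv I\pmod{p^{e}}$. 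Therefore $i=e-t$ and $\pi(p^{e})=p^{e-t}\pi$, as claimed.

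The main obstacle is the ascent claim, and specifically the prime $p=2$: there the squared term $p^{2s}A^{2}$ has valuation $2s$, which equals $s+1$ rather than exceeding it when $s=1$, so the clean ``valuation increases by one'' step can fail at the very first doubling. I would handle this by verifying the base data $\pi(2)=3$, $\pi(4)=6$ directly, so that $t=1$ and $v_2(Q^{6}-I)=2$ is confirmed by hand; after this every subsequent squaring has $s\ge2$ and the general estimate applies unchanged. For odd $p$ the binomial coefficients $\binom{p}{i}$ with $1<i<p$ are themselves divisible by $p$, and the estimate $ps\ge s+2$ holds whenever $s\ge1$, so no exceptional case arises and the ascent claim is uniform.
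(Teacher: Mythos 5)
Your proposal is correct, but there is nothing in the paper to compare it against: the paper states this lemma as a quoted result of Wall (1960) and supplies no proof of its own, relying on the citation. Your argument is therefore a genuine, self-contained addition rather than a variant of an internal proof. Where Wall's original treatment proceeds through Fibonacci--Lucas identities, you identify $\pi(m)$ with the order of $Q=\begin{pmatrix}1&1\\1&0\end{pmatrix}$ in $GL_2(\ZZ/m\ZZ)$ --- the reduction $F_k\equiv0,\ F_{k+1}\equiv1\Rightarrow F_{k-1}\equiv1\pmod m$ makes this identification legitimate --- and then run a lifting-the-exponent ascent on $v_p(Q^{\pi}-I)$. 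This framing buys several things at once: the two divisibilities $\pi\mid\pi(p^e)$ (nesting, Lemma~\ref{nesting}) and $\pi(p^e)\mid \pi p^{e-t}$ together with the exact count $v_p\bigl(Q^{\pi p^{j}}-I\bigr)=t+j$ do all the work; and finiteness of $t$ comes for free, since $Q^{\pi}\neq I$ over $\ZZ$ forces $v_p(Q^{\pi}-I)<\infty$. You also correctly isolate the only delicate point, $p=2$ with $s=1$, where the term $2^{2s}A^2$ in the binomial expansion has valuation $s+1$ rather than at least $s+2$; your fix --- checking $\pi(2)=3$, $\pi(4)=6$ (so $t=1$) and verifying $v_2(Q^6-I)=2$ directly, which holds since $Q^6-I=\begin{pmatrix}12&8\\8&4\end{pmatrix}$ --- is exactly what is needed, because every subsequent squaring then has $s\geq2$. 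The endgame is likewise sound: from $\pi\mid\pi(p^e)$ and $\pi(p^e)\mid\pi p^{e-t}$ you get $\pi(p^e)=\pi p^{i}$ with $0\leq i\leq e-t$, and any $i<e-t$ contradicts $v_p\bigl(Q^{\pi p^{i}}-I\bigr)=t+i<e$. In short: a correct proof, by the standard modern matrix-order route, filling a gap the paper covers only by citation.
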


The online research community \underline{PrimeGrid} is currently leading a computer search for Wall-Sun-Sun primes with its leading edge over $3\times10^{17}$. Because we have still not found a Wall-Sun-Sun prime, we can say that for every known prime that has been tested, $t=1$ and $\pi(p^e)=p^{e-1}\pi(p)$.
\begin{corollary}\label{conj:PiGrowth1}
For every prime $p$ that is not a Wall-Sun-Sun prime, $\pi(p^i)<\pi(p^{i+1})$ for $i \in \ZZ^+$.
\end{corollary}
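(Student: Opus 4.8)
The plan is to derive Corollary~\ref{conj:PiGrowth1} directly from Lemma~\ref{lemma:WallCriteria}. Let $p$ be a prime that is not a Wall-Sun-Sun prime, and let $t$ be the largest integer such that $\pi(p^t)=\pi(p)$ as in Wall's criterion. The definition of a Wall-Sun-Sun prime is precisely the condition $\pi(p)=\pi(p^2)$; since $p$ is \emph{not} such a prime, we have $\pi(p^2)\neq\pi(p)$, which forces $t=1$. This is the crux of the whole argument: ruling out the Wall-Sun-Sun condition pins down the value of $t$ exactly, and everything else is a short consequence.

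With $t=1$ in hand, I would invoke Lemma~\ref{lemma:WallCriteria} with this value. The lemma states that for all $e\vargeq t$, we have $\pi(p^e)=p^{e-t}\pi(p)$, so substituting $t=1$ yields the clean formula
\[
\pi(p^e)=p^{e-1}\pi(p)\qquad\text{for all } e\vargeq 1.
\]
From here the strict inequality is immediate. Fix any $i\in\ZZ^+$ and compare consecutive terms:
\[
\pi(p^{i+1})=p^{(i+1)-1}\pi(p)=p^{i}\pi(p)=p\cdot\bigl(p^{i-1}\pi(p)\bigr)=p\cdot\pi(p^i).
\]
Since $p\vargeq 2$ and $\pi(p^i)$ is a positive integer (the Pisano period is always a positive integer by Wall's periodicity theorem), multiplying by $p$ strictly increases the value, so $\pi(p^i)<p\cdot\pi(p^i)=\pi(p^{i+1})$, which is exactly the claim.

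The only genuine obstacle is the logical step establishing $t=1$, and it is worth stating carefully rather than waving at. One must check that the hypothesis ``$p$ is not Wall-Sun-Sun,'' i.e. $\pi(p^2)\neq\pi(p)$, is strong enough to conclude $t=1$ and not merely $t<2$ in some vacuous sense. This follows because the nesting property (Lemma~\ref{nesting}) guarantees $\pi(p)\mid\pi(p^2)$, so $\pi(p)\vargeq\pi(p^2)$ is impossible unless they are equal; combined with $\pi(p)\mid\pi(p^2)$ we always have $\pi(p)\varleq\pi(p^2)$, and $t$ being the \emph{largest} exponent with $\pi(p^t)=\pi(p)$ is well-defined with $t\vargeq 1$ trivially (since $\pi(p^1)=\pi(p)$). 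The non-Wall-Sun-Sun hypothesis then rules out $t\vargeq 2$, leaving $t=1$. Once this bookkeeping is settled, the proof is essentially a one-line substitution into Wall's formula, so no lengthy computation is required.
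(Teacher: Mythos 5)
Your proposal is correct and follows essentially the same route as the paper's proof: invoke Lemma~\ref{lemma:WallCriteria} with $t=1$ to obtain $\pi(p^e)=p^{e-1}\pi(p)$ for all $e\vargeq1$, then compare consecutive powers. The only difference is that you carefully justify why the non-Wall-Sun-Sun hypothesis forces $t=1$ (using the nesting property, Lemma~\ref{nesting}), a step the paper's proof takes for granted; this is a welcome tightening, not a different argument.
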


\begin{proof}
Because we are supposing that $p$ is not a Wall-Sun-Sun prime, Wall's lemma \ref{lemma:WallCriteria} tells us that for all $e\vargeq1$, $\pi(p^e)=p^{e-1}\pi(p)$. Then $\pi(p^i)=p^{i-1}\pi(p)<p^{(i+1)-1}\pi(p)=\pi(p^{i+1})$.
\end{proof}

Leveraging the following Lemma by Wall \cite{[Wall]}, we can extend Corollary \ref{conj:PiGrowth1} to composite integers.
\begin{lemma}[Wall]\label{lemma:lcm}
If $m$ has the prime factorization $m=p_1^{e_1}p_2^{e_2}\cdots p_n^{e_n}$, then
\[
\pi(m)=\text{LCM}\left(\pi(p_1^{e_1}),\pi(p_2^{e_2}),\cdots ,\pi(p_n^{e_n})\right).
\]
\end{lemma}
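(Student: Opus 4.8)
The plan is to reinterpret the Pisano period through the Fibonacci $Q$-matrix and then invoke the Chinese Remainder Theorem, so that the least common multiple appears naturally as the least common return time of several independent periodic orbits. First I would set $Q=\begin{pmatrix}0&1\\1&1\end{pmatrix}$ and recall the standard identity
\[
Q^k=\begin{pmatrix} F_{k-1}&F_k\\ F_k&F_{k+1}\end{pmatrix},
\]
which follows by a one-line induction on $k$. Since $\det Q=-1$ is a unit modulo every integer $q$, the reduction of $Q$ lies in $GL_2(\ZZ/q\ZZ)$, and I claim that $\pi(q)$ is exactly the multiplicative order of $Q$ in this group. Indeed, $Q^k\equiv I\pmod q$ forces $F_k\equiv 0$ and $F_{k+1}\equiv 1$, while the remaining entry $F_{k-1}\equiv 1$ is automatic from $F_{k-1}=F_{k+1}-F_k$; conversely these two congruences give $Q^k\equiv I$. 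Thus the definition of $\pi(q)$ as the least positive $k$ with $F_k\equiv 0$ and $F_{k+1}\equiv 1$ coincides with the least positive $k$ with $Q^k\equiv I$.

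With this dictionary in hand, the crucial structural fact is purely group-theoretic: in any group the set of exponents $k$ with $Q^k=I$ is precisely the set of multiples of $\mathrm{ord}(Q)$. Hence, working modulo a fixed $q$,
\[
F_k\equiv 0\ \text{and}\ F_{k+1}\equiv 1 \pmod q \iff \pi(q)\mid k .
\]
This is the step that upgrades ``the period divides $k$'' from one direction (mere periodicity, already guaranteed by Wall's theorem) to a genuine biconditional, and it is the only place where invertibility of $Q$ is essential.

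Next I would apply the Chinese Remainder Theorem. Writing $m=p_1^{e_1}\cdots p_n^{e_n}$ with pairwise coprime factors, the ring isomorphism $\ZZ/m\ZZ\cong \prod_{i=1}^{n}\ZZ/p_i^{e_i}\ZZ$ shows that $Q^k\equiv I\pmod m$ holds if and only if $Q^k\equiv I\pmod{p_i^{e_i}}$ for every $i$. Translating each local condition via the biconditional above, $Q^k\equiv I\pmod m$ holds exactly when $\pi(p_i^{e_i})\mid k$ for all $i$, that is, exactly when $k$ is a common multiple of $\pi(p_1^{e_1}),\dots,\pi(p_n^{e_n})$. The smallest positive such $k$ is by definition their least common multiple, giving $\pi(m)=\mathrm{LCM}\bigl(\pi(p_1^{e_1}),\dots,\pi(p_n^{e_n})\bigr)$.

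I expect no serious obstacle here: the difficulty is entirely in choosing the right setup rather than in any estimate. The one point that repays care is the equivalence $Q^k\equiv I\iff(F_k,F_{k+1})\equiv(0,1)$, where one must confirm that the third matrix entry imposes no extra constraint, together with the group-theoretic claim that return times are exactly the multiples of the order, which silently uses that $Q$ is invertible so that the orbit of the state vector is a true cycle rather than an eventually-periodic tail. An alternative, matrix-free route would track the pair $(F_k,F_{k+1})$ directly and argue that the set of $k$ returning it to $(0,1)$ is closed under addition and subtraction, hence a subgroup of $\ZZ$ generated by $\pi(q)$; but the $Q$-matrix packages this bookkeeping most cleanly.
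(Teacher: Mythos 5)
Your proof is correct and complete. Note, though, that the paper does not actually prove this lemma: it is quoted as background material from Wall's 1960 paper, so there is no internal proof to compare against. Your argument --- identifying $\pi(q)$ with the order of $Q=\begin{pmatrix}0&1\\1&1\end{pmatrix}$ in $GL_2(\ZZ/q\ZZ)$, using that $\det Q=-1$ is a unit so that the return times $\{k : Q^k\equiv I\}$ are exactly the multiples of that order, and then splitting $\ZZ/m\ZZ$ via the Chinese Remainder Theorem --- is the standard modern route and is airtight. The key biconditional, that $F_k\equiv 0$ and $F_{k+1}\equiv 1$ hold if and only if $Q^k\equiv I$ (with the third entry $F_{k-1}\equiv 1$ forced by the recurrence), matches the paper's definition of $\pi(m)$ exactly and is precisely what upgrades the lcm from an upper bound for the period to its exact value. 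Wall's original argument reaches the same conclusion by tracking pairs of consecutive residues $(F_k,F_{k+1})$ directly, essentially the matrix-free route you sketch at the end, so the two proofs are equivalent in substance; the $Q$-matrix version buys you the group-theoretic fact about orders for free, while the pair-tracking version avoids linear algebra. Either way, your proposal would serve as a self-contained proof of a statement the paper leaves to the literature.
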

Then we have the following result:
\begin{proposition}\label{conj:PiGrowth2}
For every integer $m$ that has no Wall-Sun-Sun primes in its prime factorization, $\pi(m^i)<\pi(m^{i+1})$ for $i \in \ZZ^+$.
\end{proposition}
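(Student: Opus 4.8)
The plan is to reduce to the prime-power case through Wall's factorization lemma and then compare the two periods one prime at a time. First I would write $m=p_1^{e_1}p_2^{e_2}\cdots p_n^{e_n}$, so that $m^i=p_1^{ie_1}\cdots p_n^{ie_n}$, and apply Lemma~\ref{lemma:lcm} to obtain $\pi(m^i)=\text{LCM}\big(\pi(p_1^{ie_1}),\ldots,\pi(p_n^{ie_n})\big)$ together with the analogous expression for $\pi(m^{i+1})$. Since no $p_j$ is a Wall--Sun--Sun prime we have $\pi(p_j^2)\neq\pi(p_j)$, so the integer $t$ of Lemma~\ref{lemma:WallCriteria} equals $1$ for each $p_j$ (this is exactly the input to Corollary~\ref{conj:PiGrowth1}), giving $\pi(p_j^a)=p_j^{\,a-1}\pi(p_j)$ for every $a\geq 1$. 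Substituting yields the clean formula $\pi(m^i)=\text{LCM}_{j}\big(p_j^{\,ie_j-1}\pi(p_j)\big)$, which is the object I would analyze.

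The divisibility half is free: because $m^i\mid m^{i+1}$, the nesting Lemma~\ref{nesting} gives $\pi(m^i)\mid\pi(m^{i+1})$ and hence $\pi(m^i)\leq\pi(m^{i+1})$. All the content of the proposition therefore lies in upgrading this to a strict inequality, i.e.\ in ruling out $\pi(m^i)=\pi(m^{i+1})$.

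To detect strict growth I would pass to $p$-adic valuations. Writing $v_q$ for the $q$-adic valuation, observe that a prime $q\nmid m$ contributes only the constant $\max_j v_q(\pi(p_j))$ to $v_q(\pi(m^i))$, so growth can come only from the primes dividing $m$. For such a prime $p_\ell$ the formula above gives
\[
v_{p_\ell}\big(\pi(m^i)\big)=\max\Big(\,ie_\ell-1+v_{p_\ell}(\pi(p_\ell)),\ C_\ell\Big),\qquad C_\ell:=\max_{j\neq \ell} v_{p_\ell}(\pi(p_j)),
\]
since only the $\ell$-th term of the $\text{LCM}$ carries the $i$-dependent factor $p_\ell^{\,ie_\ell-1}$. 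The first ("own") argument grows linearly in $i$ with slope $e_\ell\geq 1$, while $C_\ell$ is a fixed "cross" contribution independent of $i$. Thus the step $i\to i+1$ strictly increases $v_{p_\ell}$ precisely when $(i+1)e_\ell-1+v_{p_\ell}(\pi(p_\ell))>C_\ell$, and since the own argument is unbounded this occurs for all large $i$; so the strict inequality is immediate once $i$ is sufficiently large.

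The hard part, and where I expect the real obstacle to sit, is making the strictness uniform over all $i\in\ZZ^+$. Because each valuation is a maximum, a prime factor whose Pisano period $\pi(p_j)$ is heavily divisible by another prime factor of $m$ can make $C_\ell$ large and stall the growth at $p_\ell$ for the first few values of $i$. The proposition can hold only if at every step at least one prime has already "escaped" its cross term, i.e.\ satisfies $(i+1)e_\ell-1+v_{p_\ell}(\pi(p_\ell))>C_\ell$; the most dangerous case is the base case $i=1$, where the own arguments are smallest relative to the $C_\ell$. I would therefore concentrate the effort on bounding the cross contributions $C_\ell$ against the exponents $e_\ell$ and on settling $i=1$ directly, since that is the step most likely to obstruct a clean uniform argument.
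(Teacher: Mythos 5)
Your reduction and valuation bookkeeping are correct, and the worry you voice in your final paragraph is not merely ``the hard part'' of the proof --- it is fatal, because the proposition as stated is false. Take $m=6$ and $i=1$. Neither $2$ nor $3$ is a Wall--Sun--Sun prime (since $\pi(4)=6\neq3=\pi(2)$ and $\pi(9)=24\neq8=\pi(3)$), so the hypothesis holds; yet
\[
\pi(6)=\text{LCM}\left(\pi(2),\pi(3)\right)=\text{LCM}(3,8)=24
\qquad\text{and}\qquad
\pi(36)=\text{LCM}\left(\pi(4),\pi(9)\right)=\text{LCM}(6,24)=24,
\]
so $\pi(m^1)=\pi(m^2)$. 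In your notation this is exactly the stalling scenario at $i=1$: for $p_\ell=2$ the ``own'' term is $2e_\ell-1+v_2(\pi(2))=1$ while $C_\ell=v_2(\pi(3))=3$, and for $p_\ell=3$ the ``own'' term is $1$ while $C_\ell=v_3(\pi(2))=1$; neither prime escapes its cross term, so the LCM does not move. (The failure is not isolated: $\pi(12)=\pi(144)=24$ as well.) Hence no bound on the $C_\ell$ in terms of the $e_\ell$ can exist, and the program sketched in your last paragraph cannot be completed. What your analysis does establish is precisely the true part of the statement: $\pi(m^i)\mid\pi(m^{i+1})$ always; $\pi(m^i)<\pi(m^{i+1})$ for all sufficiently large $i$; and strict growth for every $i\geq1$ whenever some prime escapes at every step --- as happens for the bases $m=5^x2^y$ the paper actually needs, where $v_5(\pi(m^i))=ix$ is strictly increasing when $x\geq1$, and the $x=0$ case is the prime-power Corollary~\ref{conj:PiGrowth1}.

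For comparison, the paper's own proof takes the same first two steps as you (Lemma~\ref{lemma:lcm} combined with Corollary~\ref{conj:PiGrowth1}), but then concludes directly that because each argument of the LCM strictly increases, the LCM itself strictly increases. That inference is exactly the fallacy your valuation analysis is built to avoid, and it is refuted by the same numbers: $3<6$ and $8<24$, yet $\text{LCM}(3,8)=\text{LCM}(6,24)=24$. So your route is the more careful one; its honest conclusion is that the proposition must be weakened (to eventual strict growth) or restricted (to bases such as $5^x2^y$ in which a single prime's valuation grows at every step), and that whichever restriction is chosen must then be carried through Lemma~\ref{thm:PiDigits}, whose proof by contradiction quotes this proposition and therefore inherits the same gap for bases such as $\beta=6$.
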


\begin{proof}
Suppose $m$ has the prime factorization $m=p_1^{e_1}p_2^{e_2}\ldots p_n^{e_n}$, where no $p$ is a Wall-Sun-Sun prime. Then we have the following:
\[
\pi(m^i)=\text{LCM}\left(\pi(p_1^{{ie_1}})\pi(p_2^{{ie_2}})\cdots\pi(p_n^{{ie_n}})\right).
\]
And by Theorem \ref{conj:PiGrowth1} we have that for all $j=1,2,\ldots,n$, $\pi(p_j^{ie_j})<\pi(p_j^{(i+1)e_j})$. Then
\begin{align*}
\pi(m^i)&=\text{LCM}\left(\pi(p_1^{{ie_1}})\pi(p_2^{{ie_2}})\cdots\pi(p_n^{i(e_n)})\right)\\
&<\text{LCM}\left(\pi(p_1^{{(i+1)e_1}})\pi(p_2^{{(i+1)e_2}})\cdots\pi(p_n^{{(i+1)e_n}})\right)\\
&=\pi(m^{i+1}).\qedhere
\end{align*}
\end{proof}

\section{Bases and Moluli}
We prove the Main Theorem by exploiting the connection between the digits in the $\beta^k$'s place of the Fibonacci sequence in base $\beta$ and the Fibonacci sequence modulo $\beta^{k+1}$. To extract the digits $0,1,2,\ldots,\beta-1$ in the $\beta^k$'s place, we introduce the following function:
\begin{definition}
Denote by $\left(\Phi_{\beta^k}(n)\right)_{n\in\mathbb{N}}$ the sequence of $\beta^k$'s place digits in the Fibonacci sequence in base $\beta$. To obtain this sequence, we can use the following function:
\begin{align*}
\Phi_{\beta^k}:\mathbb{N}&\rightarrow\mathbb{Z}/\beta\mathbb{Z}  \\
n&\mapsto\left\lfloor \frac{F_n}{\beta^k} \right\rfloor \mod{\beta} .
\end{align*}

\end{definition}

Let $\left(\Phi_{\beta^k}(n)\right)_{n=1}^{N_k}$ denote one (shortest) period of the $\beta^k$'s place digits of the Fibonacci sequence in base $\beta$ where $N_k$ is the length of a shortest period of the sequence $\left(\Phi_{\beta^k}(n)\right)_{n\in\mathbb{N}}$.  This notation is justified in the following Lemma.
\begin{lemma}
\label{thm:PiDigits}
The sequence $\left(\Phi_{\beta^k}(n)\right)_{n\in\mathbb{N}}$ is periodic. Furthermore, if the base $\beta$ is not divisible by any Wall-Sun-Sun primes, the length $N_k$ of its period is equal to  $\pi(\beta^{k+1})$.
\end{lemma}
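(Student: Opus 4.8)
The plan is to split the statement into periodicity together with the bound $N_k \mid \pi(\beta^{k+1})$, and the matching divisibility $\pi(\beta^{k+1}) \mid N_k$ that forces equality. First I would record that the $\beta^k$-place digit of $F_n$ is determined by $F_n$ modulo $\beta^{k+1}$: writing $F_n = q\beta^{k+1}+r$ with $0\le r<\beta^{k+1}$ gives
\[
\Phi_{\beta^k}(n) = \left\lfloor \frac{F_n}{\beta^k}\right\rfloor \bmod \beta = \left\lfloor \frac{r}{\beta^k}\right\rfloor = \left\lfloor \frac{F_n \bmod \beta^{k+1}}{\beta^k}\right\rfloor,
\]
so that $\Phi_{\beta^k}$ factors through the map $n\mapsto F_n \bmod \beta^{k+1}$. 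By Wall's theorem this map is purely periodic with period $\pi(\beta^{k+1})$, hence $\left(\Phi_{\beta^k}(n)\right)$ is periodic and its minimal period $N_k$ divides $\pi(\beta^{k+1})$. This disposes of the first sentence and of the upper bound.

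To see why the reverse divisibility should hold, and where the Wall-Sun-Sun hypothesis enters, it helps to note that the tuple $\left(\Phi_{\beta^0}(n),\ldots,\Phi_{\beta^k}(n)\right)$ carries exactly the information of $F_n\bmod\beta^{k+1}$; since the minimal period of a tuple of periodic sequences is the least common multiple of the coordinate periods,
\[
\pi(\beta^{k+1}) = \mathrm{LCM}\big(N_0, N_1, \ldots, N_k\big) = \mathrm{LCM}\big(\pi(\beta^k),\, N_k\big).
\]
Proposition~\ref{conj:PiGrowth2} guarantees the strict growth $\pi(\beta^k)<\pi(\beta^{k+1})$ under the no-Wall-Sun-Sun hypothesis, so $N_k$ must genuinely enlarge the period; the content still to be extracted is that it enlarges it all the way, i.e. that the least common multiple in fact collapses to $N_k$.

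The heart of the proof, and the step I expect to be the main obstacle, is therefore the equality $\pi(\beta^{k+1})\mid N_k$, since the least common multiple alone cannot distinguish a full period from a proper divisor with the same prime support. To establish it I would analyze the difference sequence $D_n=F_{n+N_k}-F_n$, which again obeys the Fibonacci recurrence. If $\beta^s$ is the exact power of $\beta$ dividing every $D_n$, then $\pi(\beta^s)\mid N_k$ and the goal $\pi(\beta^{k+1})\mid N_k$ is precisely the claim $s\ge k+1$. Writing $D_n=\beta^s E_n$ with $E_n$ Fibonacci-type and not identically zero modulo $\beta$, one relates $\Phi_{\beta^k}(n+N_k)-\Phi_{\beta^k}(n)$ to $E_n$ together with the carries it produces. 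When $s=k$ the relation is clean, $\Phi_{\beta^k}(n+N_k)\equiv\Phi_{\beta^k}(n)+E_n\pmod\beta$, and the nonvanishing of $E_n$ modulo $\beta$ contradicts periodicity of the top digit, ruling out $s=k$. The difficult case is $s<k$, where the $\beta^s$-level perturbation can reach the $\beta^k$-place only after carrying through the intermediate digits; the technical heart is to show that for some $n$ this carry actually changes the $\beta^k$-digit, which is where the Fibonacci recurrence and the strict increase of Pisano periods (again via the no-Wall-Sun-Sun hypothesis, Proposition~\ref{conj:PiGrowth2}) must be used to prevent the perturbation from being silently absorbed.
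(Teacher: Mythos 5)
Your first paragraph is correct and matches the paper's argument: $\Phi_{\beta^k}(n)$ factors through $F_n \bmod \beta^{k+1}$, so the digit sequence is periodic and $N_k \mid \pi(\beta^{k+1})$. The identity $\pi(\beta^{k+1}) = \mathrm{LCM}\bigl(\pi(\beta^k), N_k\bigr)$ is also correct and a genuinely useful observation. But the proposal does not prove the lemma: the reverse divisibility $\pi(\beta^{k+1}) \mid N_k$, which you yourself call the heart of the proof, is never established. In your case analysis on the exact power $\beta^s$ dividing every $D_n = F_{n+N_k} - F_n$, only the case $s = k$ is ruled out; the case $s < k$ (which includes $s = 0$, since nothing forces $\beta$ to divide $D_n$ at all) is left as an acknowledged obstacle, supported only by the hope that the Fibonacci recurrence and Proposition~\ref{conj:PiGrowth2} ``must be used'' to show some carry reaches the $\beta^k$-place. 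That unresolved case is the entire content of the lemma: because the digits in places below $k$ are not assumed to repeat with period $N_k$, a discrepancy at level $s < k$ produces no contradiction by itself, and you give no mechanism for propagating it upward. This is a genuine gap, not a verification left to the reader.

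Two further remarks. First, your reluctance to deduce equality from divisibility and lcm facts alone is well founded --- indeed the paper's own proof makes exactly that leap, concluding from $N_k < \pi(\beta^{k+1})$ that $\pi(\beta^k) = \pi(\beta^{k+1})$; your lcm identity shows why this needs justification, since $\mathrm{LCM}\bigl(\pi(\beta^k), N_k\bigr) = \pi(\beta^{k+1})$ can hold with both arguments proper divisors (for $\beta = 10$, $k = 1$ one would need to exclude, e.g., $N_1 = 75$, where $\mathrm{lcm}(60,75) = 300$). Second, your difference-sequence idea can be completed, but by working with residues rather than exact integer divisibility: since $F_{n+N_k}$ and $F_n$ have the same digit in place $k$, we have $D_n \equiv e_n \pmod{\beta^{k+1}}$ for some integer $e_n$ with $|e_n| < \beta^k$; then $e_{n+2} - e_{n+1} - e_n$ is a multiple of $\beta^{k+1}$ of absolute value less than $3\beta^k$, which forces the exact equality $e_{n+2} = e_{n+1} + e_n$ whenever $\beta \geq 3$, and a bounded integer solution of the Fibonacci recurrence must be identically zero. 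Hence $\beta^{k+1} \mid D_n$ for all $n$, i.e.\ $\pi(\beta^{k+1}) \mid N_k$, with no carry analysis at all; only $\beta = 2$ requires separate care because of possible wraparound.
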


\begin{proof}
Periodicity of the sequence $\left(\Phi_{\beta^k}(n)\right)_{n\in\mathbb{N}}$  follows immediately from the periodicity of the Fibonacci sequence modulo $\beta^{k+1}$; all of the digits must repeat after $\pi(\beta^{k+1})$ terms, so the $k^\text{th}$ digit (in base $\beta$) certainly repeats every $\pi(\beta^{k+1})$ terms. It follows that $N_k \mid \pi(\beta^{k+1})$.

To show equality, we proceed by contradiction.  Suppose  that we do not have equality, then $N_k < \pi(\beta^{k+1})$. That says that the $k^\text{th}$ digit (in base $\beta$) repeats on a shorter period than the Fibonacci number modulo $\beta^{k+1}$. So the sequence modulo $\beta^{k+1}$, ignoring the $k^\text{th}$ digit, would have the same period. In other words,  $\pi(\beta^k ) = \pi(\beta^{k+1})$. But this contradicts Proposition~\ref{conj:PiGrowth2}.
\end{proof}

\begin{example*}
Consider the Fibonacci sequence in ternary (base $3$). The $3^0$'s place digits are given by the sequence:
\begin{align}
\nonumber \left(\Phi_{3^0}(n)\right)_{n=1}^{8}=0, 1, 1, 2, 0, 2, 2, 1\circlearrowleft .
\end{align}
This sequence repeats every $8$ terms. The length exactly coincides with the Pisano period $\pi(3)=\pi(\beta^{1})$ (this sequence is indeed the Fibonacci sequence modulo $3$). Consider now the sequence of $3^1$'s place digits in base $3$:
\begin{align}
\nonumber \left(\Phi_{3^1}(n)\right)_{n=1}^{24}=0, 0, 0, 0, 1, 1, 2, 1, 1, 2, 0, 2, 0, 2, 2, 2, 2, 1, 0, 1, 2, 0, 2, 0\circlearrowleft .
\end{align}
This sequence repeats every $24$ terms. The length exactly coincides with the Pisano period $\pi(9)=\pi(\beta^{2})$. Again, consider the sequence of $3^2$'s place digits in base $3$:
\begin{align}
\nonumber \left(\Phi_{3^2}(n)\right)_{n=1}^{72}=&0, 0, 0, 0, 0, 0, 0, 1, 2, 0, 0, 0, 1, 1, 2, 1, 1, 0, 2, 2, 1, 1, 2, 1, \\
\nonumber &1, 2, 0, 2, 2, 1, 0, 2, 0, 2, 0, 2, 0, 2, 2, 2, 2, 2, 2, 1, 0, 2, 2, 2, \\
\nonumber &2, 1, 0, 1, 1, 2, 0, 0, 1, 1, 0, 1, 2, 0, 2, 0, 0, 1, 2, 0, 2, 0, 2, 0\circlearrowleft .
\end{align}
This sequence repeats every $72$ terms. The length exactly coincides with the Pisano period of $\pi(27)=\pi(\beta^3)$.
\end{example*}

It is well known \cite{[Gupta], [Renault]} that the integer bases, with respect to the Fibonacci sequence, can be evenly divided into three categories according to the number of zeros in their Pisano period.
\begin{definition}
For a base $\beta$, denote by $\omega(\beta)$ the number of zeros in one Pisano period of $\beta$.
\end{definition} 
The sequence produced by $\omega(n)$ for $n=1,2,3,\ldots$ can be found at OEIS A001176 \cite{[A001176]}. 
\begin{table}[H]
\centering
\begin{tabular}{c|c|c|c|c|c|c|c|c|c|c|c|c|c|c|c|c|c|c|c}
$\beta$&2&3&4&5&6&7&8&9&10&11&12&13&14&15&16&17&18&19&20\\
\hline
$\pi(\beta)$&3&8&6&20&24&16&12&24&60&10&24&28&48&40&24&36&24&18&60\\
\hline
$\omega(\beta)$&1&2&1&4&2&2&2&2&4&1&2&4&2&2&2&4&2&1&2
\end{tabular}
\caption{Table of $\beta$, $\pi(\beta)$, and $\omega(\beta)$  for $2\varleq\beta\varleq20$.}
\end{table}
It has been proven by Gupta et.~al.~\cite{[Gupta]} that the only possible values for $\omega(n)$ are $1, 2,$ and $4$. In the table below, we categorize the integer bases according to the number of zeros in their Pisano period.

\begin{table}[H]
\centering
\begin{tabular}{c|c|c}
&&\\
$\omega(\beta)=1$&$\omega(\beta)=2$&$\omega(\beta)=4$\\
&&\\
\hline
1, 2, 4, 11, 19, 22, 29, 31,&3, 6, 7, 8, 9, 12, 14, 15,&5, 10, 13, 17, 25, 26, 34, 37,\\
38, 44, 58, 59, 62, 71, 76, 79,&16, 18, 20, 21, 23, 24, 27, 28,&50, 53, 61, 65, 73, 74, 85, 89,\\
101, 116, 118, 121, 124, 131,&30, 32, 33, 35, 36, 39, 40,&97, 106, 109, 113, 122, 125,\\
 139,142, 151, 158, 179, 181,\ldots&41, 42, 43, 45, 46, 47, 48, 49,\ldots&130, 137, 146, 149, 157\ldots\\
&&\\
OEIS: A053031 \cite{[A053031]}&OEIS: A053030 \cite{[A053030]}&OEIS: A053029 \cite{[A053029]}
\end{tabular}
\caption{Splitting of bases $\beta$ according to the number of zeros in one Pisano period}
\end{table}
Although there are countably many integer bases in each category, the rate at which each category grows is not balanced. Of the first ten thousand bases, $1013$ bases have one zero in their Pisano period, $7917$ bases have two zeros in their Pisano period, and $1070$ bases have four zeros in their Pisano period. Evidence of this uneven distribution in found in a series of conjectures on the OEIS: A053029, A053030, and A053031 \cite{[A053029], [A053030], [A053031]} concerning the number of zeros in an integer's Pisano period. We have summarized the three conjectures from the OEIS into the following:
\begin{conjecture}\label{conj:oeis}[OEIS: A053029, A053030, A053031]\\
\\
\romannumeral1.\relax \ An integer $m$ has four zeros in its Pisano period if and only if $m$ is an odd number, all of whose factors have four zeros in their Pisano period, or if $m$ is twice such a number.\\
\\
\romannumeral2.\relax \ An integer $m$ has one zero in its Pisano period if and only if $m$ is an odd number, all of whose factors have one zero in their Pisano period, or if $m$ is twice or four times such a number.\\
\\
\romannumeral3.\relax \ Every other integer has one zero in its Pisano period.
\end{conjecture}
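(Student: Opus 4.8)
The plan is to reduce the trichotomy to a statement about the $2$-adic valuations of the Pisano period and the \emph{rank of apparition}, and then to read off the three cases from a single dictionary for prime powers. Throughout, write $\alpha(m)$ for the rank of apparition of $m$, i.e.\ the least $n>0$ with $m\mid F_n$, let $L_n=F_{n-1}+F_{n+1}$ denote the Lucas numbers, and let $v_2$ denote the $2$-adic valuation. Since $m\mid F_n$ if and only if $p_i^{e_i}\mid F_n$ for every prime power in the factorization of $m$, the zeros of the Fibonacci sequence modulo $m$ occur exactly at the multiples of $\alpha(m)$, so $\alpha(m)=\operatorname{LCM}_i\alpha(p_i^{e_i})$ and $\omega(m)=\pi(m)/\alpha(m)$. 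Combining this with Wall's Lemma~\ref{lemma:lcm}, and using that each ratio $\omega(p_i^{e_i})=\pi(p_i^{e_i})/\alpha(p_i^{e_i})\in\{1,2,4\}$ is a power of $2$ (so $\pi(p_i^{e_i})$ and $\alpha(p_i^{e_i})$ share the same odd part), the odd parts of $\pi(m)$ and $\alpha(m)$ agree. Hence
\[
\omega(m)=2^{\,\max_i v_2(\pi(p_i^{e_i}))-\max_i v_2(\alpha(p_i^{e_i}))},
\]
and the whole problem becomes a bookkeeping of the pairs $\bigl(v_2(\alpha(p_i^{e_i})),\,v_2(\pi(p_i^{e_i}))\bigr)$.

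Second, I would reduce from prime powers to primes. For an odd prime $p$, both $\pi(p^e)$ and $\alpha(p^e)$ equal their value at $p$ times a power of $p$ (this is the content of Wall's criteria, Lemma~\ref{lemma:WallCriteria}, for the period, together with the analogous statement for the rank); since that factor is odd, $v_2(\alpha(p^e))=v_2(\alpha(p))$ and $v_2(\pi(p^e))=v_2(\pi(p))$, so $\omega(p^e)=\omega(p)$ for every $e$. The prime $2$ is handled by the well-known values of $\pi(2^e)$ and $\alpha(2^e)$: the pairs are $(0,0)$ for $2$, $(1,1)$ for $4$, and $(e-2,e-1)$ for $2^e$ with $e\ge3$, giving $\omega=1,1,2$ respectively. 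This isolates all the arithmetic content in the behaviour of odd primes.

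The heart of the argument — and the step I expect to be the main obstacle — is the following dictionary for an odd prime $p$ (in essence a classical relation of Vinson between the period and the rank):
\[
\omega(p)=4\iff v_2(\alpha(p))=0,\qquad \omega(p)=1\iff v_2(\alpha(p))=1,\qquad \omega(p)=2\iff v_2(\alpha(p))\ge2.
\]
I would prove it from Fibonacci–Lucas identities. Writing $c\equiv F_{\alpha(p)+1}\pmod p$, the determinant identity $F_{n+1}F_{n-1}-F_n^2=(-1)^n$ gives $c^2\equiv(-1)^{\alpha(p)}$, and $\omega(p)$ is the multiplicative order of $c$. If $\alpha(p)$ is odd then $c^2\equiv-1$, forcing order $4$. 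If $\alpha(p)=2\beta$ is even, then $F_{2\beta}=F_\beta L_\beta\equiv0$ with $p\nmid F_\beta$ (minimality of the rank) forces $p\mid L_\beta$; combining $L_{2\beta}=L_\beta^2-2(-1)^\beta$ with $L_{2\beta}=F_{\alpha(p)-1}+F_{\alpha(p)+1}\equiv2c$ yields $c\equiv(-1)^{\beta+1}$, so $c\equiv1$ (order $1$) exactly when $\beta$ is odd, i.e.\ $v_2(\alpha(p))=1$, and $c\equiv-1$ (order $2$) exactly when $\beta$ is even, i.e.\ $v_2(\alpha(p))\ge2$. The delicate points are the minimality argument forcing $p\mid L_{\alpha(p)/2}$ and the use of $-1\not\equiv1$ (which is precisely why $p=2$ must be excluded and treated separately).

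Finally, I would assemble the three cases from the displayed formula $\omega(m)=2^{\max_i b_i-\max_i a_i}$, where each odd prime factor contributes $(a,b)=(0,2),(1,1)$, or $(s,s+1)$ with $s\ge2$ according to $\omega(p)=4,1,2$, and the power of $2$ contributes $(0,0),(1,1)$, or $(e-2,e-1)$. Since every contributing pair satisfies $b\le a+2$ with equality only for $(0,2)$, one checks directly that $\max_i b_i-\max_i a_i=2$ forces $\max_i a_i=0$, i.e.\ $m$ is a product of $\omega=4$ primes, optionally times a single factor of $2$, which is part~(i); that $\max_i b_i=\max_i a_i$ forces the largest-$a$ factor to be of type $(1,1)$ or $(0,0)$ and all others to have $b\le1$, i.e.\ $m$ is a product of $\omega=1$ primes, optionally times $2$ or $4$, which is part~(ii); and that every remaining $m$ gives $\max_i b_i-\max_i a_i=1$, hence $\omega(m)=2$, which is part~(iii) (where ``one zero'' should read ``two zeros''). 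This argument also recovers Gupta's bound $\omega(m)\le4$ en route. The one caveat is the Wall–Sun–Sun hypothesis underlying Lemma~\ref{lemma:WallCriteria}; but since the scaling factor from $p$ to $p^e$ is a power of the odd prime $p$ in any case, $\omega(p^e)=\omega(p)$ holds unconditionally, so the conclusion is insensitive to it.
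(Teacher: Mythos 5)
There is nothing in the paper to compare your argument against: this statement is presented as Conjecture~\ref{conj:oeis}, a summary of comments on OEIS A053029--A053031, and the paper never proves it (it is only invoked conditionally in Section 7). What you have written is, as far as I can check, a correct proof --- essentially a reconstruction of Vinson's classical rank-of-apparition theory, which the paper cites in connection with Theorem~\ref{thm:Renault} but never deploys. The skeleton is sound: zeros occur exactly at the multiples of the rank $\alpha(m)$, so $\omega(m)=\pi(m)/\alpha(m)$; since each prime-power ratio $\omega(p^e)$ is a power of $2$, the odd parts of $\pi(m)$ and $\alpha(m)$ agree and $\omega(m)=2^{\max_i v_2(\pi(p_i^{e_i}))-\max_i v_2(\alpha(p_i^{e_i}))}$; your Fibonacci--Lucas dictionary for odd primes ($\omega=4,1,2$ according as $v_2(\alpha(p))=0$, $=1$, $\ge2$) checks out, as do the pairs for $2$, $4$, and $2^e$ with $e\ge3$; and the bookkeeping over the pairs $(a_i,b_i)$ yields exactly parts (i)--(iii), with the degenerate cases $m=1,2,4$ landing where they should and with your correction of the typo in part (iii) (``one zero'' must indeed read ``two zeros''). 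The $\max$-formula is genuinely needed rather than a stylistic choice: the prime dictionary does not extend verbatim to composites --- e.g.\ $\alpha(55)=10\equiv2\pmod4$ yet $\omega(55)=\pi(55)/\alpha(55)=20/10=2$, because $F_{11}\equiv34\pmod{55}$ is a square root of unity that is $\pm1$ only locally --- and your formula handles precisely this failure. You do lean on standard facts not proved in the paper ($m\mid F_n\iff\alpha(m)\mid n$; the values $\pi(2^e)=3\cdot2^{e-1}$ and $\alpha(2^e)=3\cdot2^{e-2}$ for $e\ge3$; the rank analogue of Lemma~\ref{lemma:WallCriteria}), so a full write-up needs citations there, but your closing observation that the Wall--Sun--Sun hypothesis is irrelevant (the scaling factor from $p$ to $p^e$ is a power of the odd prime $p$, hence cannot change $v_2$) is correct. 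Two payoffs are worth recording: Conjecture~\ref{conj:oeis} becomes a theorem, with Theorem~\ref{thm:Renault} and Gupta's bound $\omega(m)\le4$ falling out of the same formula; and your formula shows that the claim in Section 7 that this conjecture would give ``$\omega(\beta)=4\Rightarrow\omega(\beta^i)=4$ for all $i\ge1$'' is false for even $\beta$ --- for instance $\omega(10)=4$ but $\omega(100)=\pi(100)/\alpha(100)=300/150=2$ --- so the heuristic argument there applies only to odd bases with $\omega(\beta)=4$.
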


There is a table of relations compiled by Vinson \cite{[Vinson]} that was distilled into a theorem of Renault \cite{[Renault]} categorizing when a number has exactly $1$, $2$, or $4$ zeros in its Pisano period. 
\begin{theorem}[Renault]\label{thm:Renault}
Let $m$ and $n$ be integer bases, then:
\begin{table}[H]
\centering
\begin{tabular}{cc|ccc}
&&&$\omega(m)$&\\
&&&&\\
&&1&2&4\\
\hline
&&&&\\
&1&1&2&4 if $m=2$, else 2\\
&&&&\\
$\omega(n)$&2&2&2&2\\
&&&&\\
&4&4 if $n=2$, else 2&2&4
\end{tabular}
\caption{Table of $\omega\left(\text{LCM}(m,n)\right)$.}
\end{table}
\end{theorem}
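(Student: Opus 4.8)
The plan is to route everything through two finer invariants of a modulus $k$: the \emph{rank of apparition} $\alpha(k)$, the least $\ell>0$ with $k\mid F_\ell$, and the \emph{order} $\omega(k)=\pi(k)/\alpha(k)$, which Vinson shows always lies in $\{1,2,4\}$. This $\omega(k)$ is exactly the count of zeros already attached to $k$ in the excerpt, since within one period the zeros of the Fibonacci sequence modulo $k$ occur precisely at the multiples $0,\alpha(k),2\alpha(k),\dots$ of the rank. I would first record two compatibility identities. Because $k\mid F_\ell$ holds exactly when $\alpha(k)\mid\ell$, the modulus $\mathrm{LCM}(m,n)$ divides $F_\ell$ iff both $\alpha(m)\mid\ell$ and $\alpha(n)\mid\ell$, so $\alpha(\mathrm{LCM}(m,n))=\mathrm{LCM}(\alpha(m),\alpha(n))$. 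Combining the nesting Lemma~\ref{nesting} with Wall's Lemma~\ref{lemma:lcm} likewise yields $\pi(\mathrm{LCM}(m,n))=\mathrm{LCM}(\pi(m),\pi(n))$.

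Because $\omega(k)$ is a power of $2$, I can pass to $2$-adic valuations. Writing $a(k)=\nu_2(\alpha(k))$ and $p(k)=\nu_2(\pi(k))$, the identity $\pi=\omega\,\alpha$ gives $\omega(k)=2^{\,p(k)-a(k)}$; and since the $2$-adic valuation of an LCM is the maximum of the valuations, the two compatibility identities collapse to the single formula
\[
\omega(\mathrm{LCM}(m,n))=2^{\,\max(p(m),p(n))-\max(a(m),a(n))}.
\]
It then remains to feed in the pairs $(a(k),p(k))$ allowed in each $\omega$-class. Distilling Vinson's relations, for a base $k\ge 2$ these are: $\omega(k)=4$ forces odd rank and $\pi=4\alpha$, hence $(a,p)=(0,2)$; $\omega(k)=2$ forces even rank and $\pi=2\alpha$, hence $(a,p)=(a,a+1)$ with $a\ge 1$; and $\omega(k)=1$ forces $\pi=\alpha$, with $(a,p)=(0,0)$ when $k=2$ and $(a,p)=(1,1)$ when $k\ge 3$.

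With this data the nine cells of the table reduce to a finite check of the displayed exponent. When both arguments have $\omega=1$ one has $\max p=\max a$, so the exponent is $0$ and $\omega(\mathrm{LCM})=1$. Whenever some argument has $\omega=2$, its datum $(a,a+1)$ with $a\ge1$ makes $\max p=\max a+1$ while $\max a$ is tied, forcing the exponent to equal $1$ and $\omega(\mathrm{LCM})=2$; the same value results from pairing an $\omega=4$ base (datum $(0,2)$) with an $\omega=1$ base $k\ge 3$ (datum $(1,1)$), since then $\max p=2$ but $\max a=1$. The exponent reaches $2$, giving $\omega(\mathrm{LCM})=4$, in exactly two situations: both arguments have $\omega=4$, or one has $\omega=4$ and the other is the base $2$ (datum $(0,0)$), because only in these cases are both ranks odd, so that $\max a=0$ while $\max p=2$. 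This is precisely the exceptional corner entry ``$4$ if the $\omega=1$ argument is $2$, else $2$,'' and it explains why the prime $2$ is singled out: it is the unique base $>1$ of odd rank with $\omega=1$.

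The main obstacle is the single nontrivial input in the class data: that $\omega(k)=1$ with $k\ge 3$ forces $\nu_2(\alpha(k))=1$, equivalently $\pi(k)\equiv 2\pmod 4$. Unlike the $\omega=2,4$ rows, this does not follow from $\pi=\omega\,\alpha$, and it is genuinely not a function of the rank alone: for example $k=4$ and $k=8$ share the rank $\alpha=6$ yet have $\omega=1$ and $\omega=2$ respectively. Pinning it down requires the finer analysis underlying Vinson's table. Setting $s\equiv F_{\alpha+1}\pmod k$, the Cassini identity $F_{\alpha-1}F_{\alpha+1}-F_\alpha^2=(-1)^{\alpha}$ gives $s^2\equiv(-1)^{\alpha}\pmod k$, so $\omega(k)=\operatorname{ord}_k(s)$; one must then show that for $k\ge 3$ the case $s\equiv 1$ (that is, $\omega=1$) can occur only when $\alpha\equiv 2\pmod 4$, together with the observation that $-1\equiv 1$ holds only modulo $2$, which is exactly what produces the exceptional entries. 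Establishing this divisibility statement is where essentially all the work lies; granting it, the valuation reduction and the nine-cell case analysis above are routine.
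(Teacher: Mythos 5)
The paper never proves this theorem: it is quoted from Renault, who distilled it from Vinson's rank--period relations, so there is no internal proof to compare yours against, and your proposal is in effect a reconstruction of that missing derivation. Its skeleton is correct. The two structural identities you use --- $\alpha(\mathrm{LCM}(m,n))=\mathrm{LCM}(\alpha(m),\alpha(n))$, coming from $k\mid F_\ell\iff\alpha(k)\mid\ell$, and $\pi(\mathrm{LCM}(m,n))=\mathrm{LCM}(\pi(m),\pi(n))$, obtainable from Lemmas~\ref{nesting} and~\ref{lemma:lcm} --- are both valid; the passage to $2$-adic valuations is legitimate precisely because $\omega\in\{1,2,4\}$; and your nine-cell check against the class data ($(0,2)$ for $\omega=4$; $(a,a+1)$, $a\ge1$, for $\omega=2$; $(0,0)$ for $k\le 2$; $(1,1)$ for $\omega=1$ with $k\ge3$) does reproduce the table. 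In fact it reproduces the \emph{corrected} table: as printed in the paper, the two corner entries condition on the wrong variable (the cell $\omega(n)=1$, $\omega(m)=4$ reads ``$4$ if $m=2$,'' although $\omega(m)=4$ precludes $m=2$), whereas your formulation ``$4$ exactly when the $\omega=1$ argument is the base $2$, else $2$'' is the correct statement. The one genuine debt in your argument is the item you flag yourself: that $\omega(k)=1$ with $k\ge3$ forces $\nu_2(\alpha(k))=1$, equivalently that $4\mid\alpha(k)$ forces $\omega(k)=2$. That is the hard core of Vinson's theorem --- your $k=4$ versus $k=8$ example rightly shows it is not a function of the rank alone --- and you only set up the Cassini/order framework for it without carrying it out. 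Since the paper itself takes the entire theorem on citation, resting on exactly this one input from Vinson leaves your proof no less rigorous than the paper's treatment; but a fully self-contained argument would still owe Vinson's case analysis of when $F_{\alpha(k)+1}\equiv1\pmod{k}$ can occur.
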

We will primarily concern ourselves in this paper with bases $\beta$ where $\omega(\beta)=4$. There is a stability to these bases that allows us to determine if the Fibonacci concatenation is normal. First we will show that in the smallest such base, base $5$, the Fibonacci concatenation is \textit{normal}.

\section{Bases of the form $5^x$}
The technique to prove the Main Theorem relies on measuring the distribution of the digits of the Fibonacci sequence. We are particularly interested in the case where the digits are \textit{uniformly distributed}.
\begin{definition}
Given a number in base $\beta$, the digits $0,1,2,\ldots,\beta-1$ are \textit{uniformly distributed} if every digit occurs with a frequency of $1/\beta$.
\end{definition}

Consider the Fibonacci sequence in base $5$. The digits in base $5$ are $0,1,2,3,4$. Then the Fibonacci sequence in base 5 begins:
\begin{align}
\nonumber (F_{n,5})=0, 1, 1, 2, 3, 10, 13, 23, 41, 114, 210, 324, 1034, 1413, 3002, 4420, 12422, 22342, 40314, \ldots
\end{align}
There is a curious fact \cite{[Niederreiter]} about the distribution of the digits in the Fibonacci sequence that only holds modulo a power of $5$:
\begin{lemma}[Niederreiter \cite{[Niederreiter]}]\label{lemma:Niederreiter}
The digits $0,1,2,\ldots,5^k-1$ in the Fibonacci sequence modulo $5^k$ are uniformly distributed.
\end{lemma}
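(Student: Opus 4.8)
The plan is to induct on $k$, exploiting the fact that $5$ is the discriminant of the Fibonacci recurrence $x^2-x-1$ and is the unique prime for which this polynomial has a repeated root modulo $p$. Throughout I write $M=\begin{pmatrix}1&1\\1&0\end{pmatrix}$, so that $M^n=\begin{pmatrix}F_{n+1}&F_n\\F_n&F_{n-1}\end{pmatrix}$, and I record two arithmetic inputs about $5$. First, since $5$ is not a Wall--Sun--Sun prime, Corollary~\ref{conj:PiGrowth1} gives $\pi(5^k)=5^{k-1}\pi(5)=4\cdot 5^k$, so one full period has length $4\cdot 5^k$ and uniform distribution means each of the $5^k$ residues occurs exactly four times. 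Second, the rank of apparition of $5$ is $5$ with $F_5=5$ exactly, which yields $v_5(F_n)=v_5(n)$ for every $n$ (here $v_5$ denotes the $5$-adic valuation).

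For the base case $k=1$ I would compute one period of $(F_n \bmod 5)$, which has length $20$, and check that each of $0,1,2,3,4$ occurs exactly four times. Equivalently, one derives the closed form $F_n\equiv n\,3^{\,n-1}\pmod 5$ from $M\equiv 3I+N\pmod 5$ with $N=M-3I$ nilpotent (since $(M-3I)^2=-5(M-2I)\equiv 0$), and observes that over a period $3^{\,n-1}$ runs through the units while $n\bmod 5$ runs independently through all residues, so each residue is hit once per value of $n\bmod 4$, hence four times total.

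For the inductive step, assume uniformity modulo $5^{k-1}$ and set $p=\pi(5^{k-1})=4\cdot 5^{k-1}$. Shifting the index by $p$ fixes every term modulo $5^{k-1}$, so the full period of length $4\cdot 5^k=5p$ breaks into orbits $\{n,n+p,\dots,n+4p\}$ on which $F\bmod 5^{k-1}$ is constant. The heart of the matter is how such an orbit distributes among the five lifts of a fixed residue of $\ZZ/5^{k-1}\ZZ$. Writing $M^{p}=I+5^{k-1}B_k$ modulo $5^k$ and using $M^{jp}=(I+5^{k-1}B_k)^j\equiv I+j\,5^{k-1}B_k\pmod{5^k}$, I extract from the bottom row that
\[
F_{n+jp}\equiv F_n + j\,5^{k-1}\bigl(a F_{n+1}+d F_n\bigr)\pmod{5^k},\qquad j=0,1,2,3,4,
\]
where $a,d$ are the $(2,1)$- and $(2,2)$-entries of $B_k$. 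Thus the orbit hits all five lifts of $F_n\bmod 5^{k-1}$ exactly once precisely when $s(n):=aF_{n+1}+dF_n\not\equiv 0\pmod 5$; in that case, summing over the four orbits attached to each residue (four by the induction hypothesis) gives each lift exactly four times, which is the claim.

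Two facts make this work uniformly in $k$, and proving them is the main obstacle. First, $B_k\bmod 5$ must be pinned down: because $v_5(F_p)=v_5(p)=k-1$ the entry $a$ is a unit, and a matrix ``lifting-the-exponent'' argument, raising $M^{20}=I+5G$ to the power $5^{k-2}$, shows $(I+5^mH)^5=I+5^{m+1}H'$ with $H'\equiv H\pmod 5$ for $m\geq 1$, whence $M^{4\cdot 5^{k-1}}=I+5^{k-1}G_k$ with $G_k\equiv G\pmod 5$. Consequently $B_k\bmod 5$ is independent of $k$, equal to its $k=2$ value $\begin{pmatrix}4&3\\3&1\end{pmatrix}$, so $a=3$ and $d=1$. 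Second, one must rule out $s(n)\equiv 0$: with these values $s(n)\equiv 3F_{n+1}+F_n$, and $s(n)\equiv 0$ forces $F_n\equiv 2F_{n+1}\pmod 5$, which a finite check over the twenty consecutive pairs $(F_n,F_{n+1})\bmod 5$ shows never occurs. Establishing the stabilization of $B_k\bmod 5$ and the non-vanishing of $s(n)$ is exactly where the special arithmetic of $5$ is used, and I expect the matrix ``lifting-the-exponent'' step to be the most delicate point; the remainder is bookkeeping with the addition formula and the induction hypothesis.
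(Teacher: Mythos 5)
Your proposal is for a statement the paper never actually proves: Lemma~\ref{lemma:Niederreiter} is imported as a black box, with a citation to Niederreiter and no argument. So there is no internal proof to compare against; what you have written is a self-contained replacement for the citation, and after checking the details I believe it is correct. The arithmetic inputs hold: $\pi(5^k)=5^{k-1}\pi(5)=4\cdot5^k$ by Lemma~\ref{lemma:WallCriteria}, so uniformity means each residue occurs exactly four times per period; $x^2-x-1\equiv(x-3)^2\pmod 5$ and indeed $(M-3I)^2=-5(M-2I)$, giving the closed form $F_n\equiv n\,3^{n-1}\pmod 5$ and the base case. Your matrix lifting-the-exponent step is sound since for $m\geq1$ every term of $(I+5^mH)^5$ beyond $I+5^{m+1}H$ has valuation at least $2m+1\geq m+2$; and from $F_{19},F_{20},F_{21}\equiv 6,15,21\pmod{25}$ one confirms $G\equiv\left(\begin{smallmatrix}4&3\\3&1\end{smallmatrix}\right)\pmod 5$, hence $a=3$, $d=1$. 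The truncation $M^{jp}\equiv I+j5^{k-1}B_k\pmod{5^k}$ needs $2(k-1)\geq k$, i.e.\ $k\geq2$, which is exactly the inductive range, and the orbit count (four orbits per residue by induction, each hitting all five lifts once when $s(n)\not\equiv0$) delivers exactly four occurrences of each residue mod $5^k$. One simplification: the concluding finite check over twenty pairs is unnecessary, because your own base-case formula kills it in one line, $s(n)\equiv 3(n+1)3^n+n3^{n-1}=3^{n-1}(10n+9)\equiv 4\cdot 3^{n-1}\pmod 5$, a unit for every $n$. As for what each route buys: the paper's citation keeps Section~4 short, but your argument makes the paper self-contained at its single most load-bearing external input, and it makes visible \emph{why} $5$ is the special prime --- $x^2-x-1$ has a double root mod $5$, equivalently $5$ ramifies in $\QQ(\sqrt5)$ --- which is the same mechanism underlying Niederreiter's theorem, reached here by matrix lifting-the-exponent rather than his binomial identity for $2^{n-1}F_n$.
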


\begin{proposition}\label{prop:base5unif}
For every positive integer $k$, the digits $0,1,2,3,4$ are uniformly distributed in the sequences $\left(\Phi_{5^k}(n)\right)_{n=1}^{N_k}$.
\end{proposition}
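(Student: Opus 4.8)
The plan is to deduce the result directly from Niederreiter's Lemma~\ref{lemma:Niederreiter}, applied not at level $k$ but at level $k+1$. The key point is that the $5^k$'s place digit of $F_n$ is completely determined by the residue of $F_n$ modulo $5^{k+1}$, so the uniform distribution of residues modulo $5^{k+1}$ should force the uniform distribution of the $5^k$'s place digit. In effect, Niederreiter's lemma will do all of the analytic work, and the remainder is an index-bookkeeping argument that translates ``place-value digit in base $5$'' into ``block of residues modulo $5^{k+1}$''.

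Concretely, I would first observe that if $F_n \equiv r \pmod{5^{k+1}}$ with $0 \le r < 5^{k+1}$, then
\[
\Phi_{5^k}(n) = \left\lfloor \frac{F_n}{5^k}\right\rfloor \bmod 5 = \left\lfloor \frac{r}{5^k}\right\rfloor ,
\]
since writing $F_n = q\,5^{k+1} + r$ gives $\lfloor F_n/5^k\rfloor = 5q + \lfloor r/5^k\rfloor$ and $0 \le \lfloor r/5^k\rfloor < 5$. Thus $\Phi_{5^k}(n) = d$ holds exactly when $r$ lies in the block of $5^k$ consecutive residues $\{\,d\,5^k,\ d\,5^k+1,\ \ldots,\ (d+1)5^k - 1\,\}$. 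In other words, the five digit-values $d \in \{0,1,2,3,4\}$ correspond to a partition of the residues modulo $5^{k+1}$ into five blocks, each containing exactly $5^k$ residues.

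Next I would invoke Lemma~\ref{thm:PiDigits}: since $5$ is not a Wall-Sun-Sun prime, the period $N_k$ of $\left(\Phi_{5^k}(n)\right)$ equals $\pi(5^{k+1})$. Set $P = N_k = \pi(5^{k+1})$. By Lemma~\ref{lemma:Niederreiter} applied to the modulus $5^{k+1}$, the residues $0,1,\ldots,5^{k+1}-1$ are uniformly distributed, so each residue occurs exactly $c := P/5^{k+1}$ times among $F_1 \bmod 5^{k+1}, \ldots, F_P \bmod 5^{k+1}$. Summing over one block, the number of $n \in \{1,\ldots,P\}$ with $\Phi_{5^k}(n) = d$ is
\[
\sum_{r = d\,5^k}^{(d+1)5^k - 1} c = 5^k c ,
\]
which is independent of $d$. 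Hence each of the five digits occurs with frequency $5^k c / P = 5^k c /(5^{k+1} c) = 1/5$, which is precisely uniform distribution.

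As for obstacles, there is no serious analytic difficulty: the content is entirely in the setup. The two points requiring care are (i) applying Niederreiter at the power $5^{k+1}$ rather than $5^k$, so that the place-value digit is actually resolved by the residue, and (ii) ensuring that we sum over exactly one period of $\left(\Phi_{5^k}(n)\right)$, which is why Lemma~\ref{thm:PiDigits}, and hence the fact that $5$ is not a Wall-Sun-Sun prime, is invoked to identify $N_k$ with $\pi(5^{k+1})$. I would also remark that $c = P/5^{k+1}$ is automatically a positive integer, since the uniform distribution in Lemma~\ref{lemma:Niederreiter} forces $5^{k+1} \mid \pi(5^{k+1})$; this is what makes the per-block counting exact rather than merely asymptotic.
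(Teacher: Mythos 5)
Your proposal is correct, and it is worth noting that it takes a structurally different --- and in fact tighter --- route than the paper's own proof. The paper proceeds by induction on $k$: the base case is Lemma~\ref{lemma:Niederreiter} itself, and the inductive step asserts that since the residues modulo $5^{j+1}$ are uniformly distributed (Niederreiter again) and the digits in places $5^0,\ldots,5^{j-1}$ are uniformly distributed by hypothesis, the $5^j$'s place digits ``must be'' uniformly distributed; the paper never makes that last inference precise. Your block-counting argument supplies exactly the missing precision and shows the induction is unnecessary: the identity $\Phi_{5^k}(n)=\left\lfloor r/5^k\right\rfloor$ for $F_n\equiv r \pmod{5^{k+1}}$ partitions the residues modulo $5^{k+1}$ into five blocks of $5^k$ consecutive values according to the digit, so exact equidistribution of residues over one period (Niederreiter applied at level $k+1$) immediately yields exact equidistribution of the digit, with no reference to lower place values at all. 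Both arguments consume the same essential input, namely Niederreiter's lemma at modulus $5^{k+1}$, and both need to know what ``one period'' means, which you handle explicitly via Lemma~\ref{thm:PiDigits}. One small refinement: you do not strictly need $N_k=\pi(5^{k+1})$ (hence the Wall-Sun-Sun hypothesis); since $N_k \mid \pi(5^{k+1})$, the span of $\pi(5^{k+1})$ terms is a whole number of periods of $\left(\Phi_{5^k}(n)\right)$, and equal counts over $m$ copies of a period force equal counts over a single period. But invoking Lemma~\ref{thm:PiDigits} as you do is perfectly legitimate, since the paper establishes it beforehand.
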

\begin{proof}
We proceed by induction on $k$. 
For $k=0$, Lemma~\ref{lemma:Niederreiter} says that each digit is uniformly distributed in $\left(\Phi_{5^0}(n)\right)_{n=1}^{N_0}$. Now, suppose for all $0\varleq i< j$ that the sequences $\left(\Phi_{5^i}(n)\right)_{n=1}^{N_i}$ are uniformly distributed and consider the sequence $\left(\Phi_{5^{j}}(n)\right)_{n=1}^{N_{j}}$. By Lemma~\ref{lemma:Niederreiter}, we know that the Fibonacci sequence modulo $5^{j+1}$ is uniformly distributed. Further, the digits in each place $5^i$ for all $0\varleq i<j$   are uniformly distributed. So the digits of the Fibonacci sequence in the $5^{j}$'s place must be uniformly distributed. It follows that the digits in the sequence $\left(\Phi_{5^{j}}(n)\right)_{n=1}^{N_{j}}$ are uniformly distributed.
\end{proof}

\begin{example*}
Consider the function $\Phi_{5^0}(n)$. A single period of the $5^0$'s place is given below:
\begin{align}
\nonumber \left(\Phi_{5^0}(n)\right)_{n=1}^{20}=0,1,1,2,3,0,3,3,1,4,0,4,4,3,2,0,2,2,4,1\circlearrowleft .
\end{align}
Notice that each digit $0,1,2,3,4$ appears \textit{exactly} four times. After each period of twenty terms of the Fibonacci sequence in base $5$, each digit in the $5^0$'s place has a frequency of $1/5$; the digits are uniformly distributed. Next, consider the $5^1$'s place. The length of the period is given by $\pi(5^2)=100$. A single period of the $5^1$'s place is given below:
\begin{align}
\nonumber \left(\Phi_{5^1}(n)\right)_{n=1}^{100}=&0,0,0,0,0,1,1,2,4,1,1,2,3,1,0,2,2,4,1,1,3,4,2,1,3,\\
\nonumber &0,3,3,2,0,3,3,1,0,2,3,0,3,3,2,1,3,4,2,1,4,0,4,0,4,\\
\nonumber &0,4,4,4,4,4,3,2,0,3,4,2,1,3,4,3,2,0,3,3,2,0,2,3,1,\\
\nonumber &0,1,1,2,4,2,1,3,4,2,2,4,1,1,2,4,1,0,2,3,1,4,0,4,0\circlearrowleft .
\end{align}
Notice again that each digit $0,1,2,3,4$ occurs \textit{exactly} twenty times; the digits are uniformly distributed. Further, there are exactly five periods of $5^0$'s place digits nested in every one period of $5^1$'s place digits. We can continue this pattern for every place value of the Fibonacci sequence in base $5$.
\end{example*}
\begin{theorem}\label{thm:coloring}
The concatenation of the Fibonacci sequence in base $5$ is \textit{simply normal}.
\end{theorem}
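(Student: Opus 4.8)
The plan is to track the partial concatenation through the first $M$ Fibonacci numbers and let $M\to\infty$. Write $T_M$ for the number of base-$5$ digits in the concatenation $F_1F_2\cdots F_M$ and $N_d(M)$ for the number of times the digit $d\in\{0,1,2,3,4\}$ occurs among them. The natural device is to sort the digits by place value: for each $j\ge 0$ let $C_j(d)$ count the occurrences of $d$ in the $5^j$'s place among $F_1,\dots,F_M$. A Fibonacci number carries a $5^j$'s digit exactly when $F_n\ge 5^j$, and that digit is then $\Phi_{5^j}(n)$, so $C_j(d)$ counts the digit $d$ in the finite block $\Phi_{5^j}(n_j),\Phi_{5^j}(n_j+1),\dots,\Phi_{5^j}(M)$, where $n_j$ is the least index with $F_{n_j}\ge 5^j$. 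Thus $N_d(M)=\sum_{j\ge 0}C_j(d)$, with $\sum_d C_j(d)=M_j:=\#\{n\le M:F_n\ge 5^j\}$ and $\sum_{j}M_j=T_M$.

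First I would dispose of the place values whose block is long compared with its period. Each block above is a contiguous piece of the purely periodic sequence $\Phi_{5^j}$, whose period has length $N_j=\pi(5^{j+1})$ by Lemma~\ref{thm:PiDigits}, and on one full period each of $0,1,2,3,4$ occurs exactly $N_j/5$ times by Proposition~\ref{prop:base5unif}. Hence $C_j(d)=M_j/5+O(N_j)$, and since $\pi(5^{j+1})=4\cdot 5^{j+1}$ grows geometrically in $j$, the errors for $j\le\log_5 M$ sum to $O(M)=o(T_M)$. So the low place values contribute each digit with the correct frequency.

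The main obstacle is that these completed-period place values are not where the digits live. Writing $c=\log 5/\log\phi$ for the golden ratio $\phi$, the number $F_n$ has about $n/c$ base-$5$ digits, so $n_j\approx cj$, $M_j\approx M-cj$, and $T_M=\sum_{n\le M}d_n\sim M^2/(2c)$. The place values $j\le\log_5 M$ therefore account for only $O(M\log M)=o(T_M)$ digits; the overwhelming majority sit in the range $\log_5 M\lesssim j\lesssim M/c$, where the block $[n_j,M]$ is shorter than a single period $N_j$ and Proposition~\ref{prop:base5unif} gives no information. Concretely, these are the high-significance digits of the Fibonacci numbers, and the leading digits among them obey Benford's law, so they are genuinely not uniform block-by-block. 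This is the crux of the theorem and the step I expect to be hardest.

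To get past it I would re-sort the same digits by significance rather than by absolute place value: let $A_i(d)$ be the number of $n\le M$ whose $i$-th most significant base-$5$ digit equals $d$, so that $N_d(M)=\sum_{i\ge 0}A_i(d)$ and only $M-ci+O(1)$ indices contribute to $A_i(d)$. The payoff is that the non-uniformity is now isolated in $i$: by equidistribution of $\{n\log_5\phi\}$ modulo $1$ (Weyl's theorem, which also yields Benford's law for the Fibonacci numbers), the $i$-th significant digit equidistributes over $n$ to a frequency $P_i(d)$ with $P_i(d)\to 1/5$ as $i\to\infty$. Writing $A_i(d)=(M-ci)P_i(d)+o(M)$ and splitting $\sum_i (M-ci)\bigl(P_i(d)-\tfrac15\bigr)$ at a cutoff $I=I(M)\to\infty$ with $I=o(M)$ bounds the head by $O(MI)$ and the tail by $\bigl(\sup_{i\ge I}|P_i(d)-\tfrac15|\bigr)T_M$, both $o(T_M)$. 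Hence $N_d(M)=\tfrac15\sum_i(M-ci)+o(T_M)=T_M/5+o(T_M)$, uniformly in $d$. Passing from boundaries $t=T_M$ to arbitrary $t$ costs only the $O(M)=o(T_M)$ digits of one Fibonacci number, giving $\lim_{t\to\infty}N_d(t)/t=1/5$ and hence simple normality. The one ingredient beyond the tools developed so far is the significant-digit equidistribution $P_i(d)\to1/5$; supplying or circumventing it is the real content of the argument.
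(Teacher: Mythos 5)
Your second and third paragraphs are a correct diagnosis, and---you could not have known this---they apply verbatim to the argument the paper actually gives. The paper's proof of Theorem~\ref{thm:coloring} is exactly the naive argument you set aside: it colors digits by place value, cites Proposition~\ref{prop:base5unif} to say that each sequence $\left(\Phi_{5^k}(n)\right)$ carries each digit with frequency $1/5$, and concludes that the concatenation does too. As you observe, this interchanges two limits without justification. After concatenating $F_1,\dots,F_M$ one has $T_M\sim M^2/(2c)$ digits (in your notation $c=\log 5/\log\phi$), but a place value $j$ has completed even one period of $\left(\Phi_{5^j}(n)\right)$ only when $M-cj\ge\pi(5^{j+1})=4\cdot 5^{j+1}$, i.e.\ only for $j\lesssim\log_5 M$; those place values carry $O(M\log M)=o(T_M)$ digits. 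For all the rest---a fraction $1-o(1)$ of the digits at every finite stage---the full-period uniformity of Proposition~\ref{prop:base5unif} says nothing, and the partial blocks that are present are genuinely biased (leading digits follow Benford's law, since $\log_5\phi$ is irrational). The same unjustified interchange occurs in the proof of Theorem~\ref{thm:LimitingFreq}. So your criticism stands against the paper's own proof, not against a strawman.

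However, your proposed repair does not close the gap; it relocates it, as your final sentence half concedes. The step that fails is summing $A_i(d)=(M-ci)P_i(d)+o(M)$ over $i$. Weyl's theorem gives that estimate for each fixed $i$, and a diagonal choice of the cutoff $I(M)$ handles the head $i<I$, but in the tail the error term is not uniform in $i$: the set of $u\in[0,1)$ for which the $i$-th significant digit of $5^{u}$ equals $d$ is a union of roughly $4\cdot 5^{i-2}$ intervals, so discrepancy-based counting gives an error of order $5^{i}D_M M$, where $D_M$ is the discrepancy of $\{n\log_5\phi\}_{n\le M}$. Since $D_M\ge 1/(2M)$ for any $M$ points, this is vacuous once $i$ exceeds $\log_5 M+O(1)$, no matter how well $\log_5\phi$ behaves. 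Equidistribution of $\{n\log_5\phi\}$ determines significands, not fractional parts ($\{F_n/5^{j+1}\}$ is not a function of $\{n\log_5\phi\}$), so it can reach only $O(\log M)$ digits in from the leading end---exactly as Pisano periodicity reaches only $O(\log M)$ digits in from the units end. The $(1-o(1))T_M$ ``middle'' digits of the $F_n$ are controlled by neither tool, in your argument or in the paper's. What your write-up does accomplish is to isolate this true obstruction and to make clear that some genuinely new input, beyond Lemma~\ref{lemma:Niederreiter} and Benford-type equidistribution, is required; as written, neither your proposal nor the paper's proof establishes the theorem.
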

\begin{proof}
Because the digits in every place value of the Fibonacci sequence in base $5$ are uniformly distributed, we can construct its concatenation in a particular way that demonstrates that it is simply normal. Suppose we color the digits Fibonacci sequence so that every digit that occurs in the $k$'s place value is colored the same: 
\begin{align}
\nonumber {\color{BrickRed}{\left(\Phi_{5^0}(n)\right)_{n\in\mathbb{N}}}}\ \ \ \ \ {\color{NavyBlue}{\left(\Phi_{5^1}(n)\right)_{n\in\mathbb{N}}}}\ \ \ \ \ {\color{OliveGreen}\left(\Phi_{5^2}(n)\right)_{n\in\mathbb{N}}}\ \ \ \ \ {\color{Plum}{\left(\Phi_{5^3}(n)\right)_{n\in\mathbb{N}}}}\ \ \ \ \ {\color{Dandelion}{\left(\Phi_{5^4}(n)\right)_{n\in\mathbb{N}}}}
\end{align}
\begin{align}
\nonumber (F_{n,5})=	{\color{BrickRed}{0}}, {\color{BrickRed}{1}}, {\color{BrickRed}{1}}, {\color{BrickRed}{2}}, {\color{BrickRed}{3}}, {\color{NavyBlue}{1}}{\color{BrickRed}{0}}, {\color{NavyBlue}{1}}{\color{BrickRed}{3}}, {\color{NavyBlue}{2}}{\color{BrickRed}{3}}, {\color{NavyBlue}{4}}{\color{BrickRed}{1}}, {\color{OliveGreen}{1}}{\color{NavyBlue}{1}}{\color{BrickRed}{4}}, {\color{OliveGreen}{2}}{\color{NavyBlue}{1}}{\color{BrickRed}{0}}, {\color{OliveGreen}{3}}{\color{NavyBlue}{2}}{\color{BrickRed}{4}}, {\color{Plum}{1}}{\color{OliveGreen}{0}}{\color{NavyBlue}{3}}{\color{BrickRed}{4}}, {\color{Plum}{1}}{\color{OliveGreen}{4}}{\color{NavyBlue}{1}}{\color{BrickRed}{3}}, {\color{Plum}{3}}{\color{OliveGreen}{0}}{\color{NavyBlue}{0}}{\color{BrickRed}{2}}, {\color{Plum}{4}}{\color{OliveGreen}{4}}{\color{NavyBlue}{2}}{\color{BrickRed}{0}}, {\color{Dandelion}{1}}{\color{Plum}{2}}{\color{OliveGreen}{4}}{\color{NavyBlue}{2}}{\color{BrickRed}{2}}, {\color{Dandelion}{2}}{\color{Plum}{2}}{\color{OliveGreen}{3}}{\color{NavyBlue}{4}}{\color{BrickRed}{2}}, {\color{Dandelion}{4}}{\color{Plum}{0}}{\color{OliveGreen}{3}}{\color{NavyBlue}{1}}{\color{BrickRed}{4}}, \ldots
\end{align}
Then the concatenation of the Fibonacci sequence in base 5 is given by:
\begin{align}
\nonumber \left<(F_{n,5})\right>=	{\color{BrickRed}{0}}{\color{BrickRed}{1}}{\color{BrickRed}{1}}{\color{BrickRed}{2}}{\color{BrickRed}{3}}{\color{NavyBlue}{1}}{\color{BrickRed}{0}}{\color{NavyBlue}{1}}{\color{BrickRed}{3}}{\color{NavyBlue}{2}}{\color{BrickRed}{3}}{\color{NavyBlue}{4}}{\color{BrickRed}{1}}{\color{OliveGreen}{1}}{\color{NavyBlue}{1}}{\color{BrickRed}{4}}{\color{OliveGreen}{2}}{\color{NavyBlue}{1}}{\color{BrickRed}{0}}{\color{OliveGreen}{3}}{\color{NavyBlue}{2}}{\color{BrickRed}{4}}{\color{Plum}{1}}{\color{OliveGreen}{0}}{\color{NavyBlue}{3}}{\color{BrickRed}{4}}{\color{Plum}{1}}{\color{OliveGreen}{4}}{\color{NavyBlue}{1}}{\color{BrickRed}{3}}{\color{Plum}{3}}{\color{OliveGreen}{0}}{\color{NavyBlue}{0}}{\color{BrickRed}{2}}{\color{Plum}{4}}{\color{OliveGreen}{4}}{\color{NavyBlue}{2}}{\color{BrickRed}{0}}{\color{Dandelion}{1}}{\color{Plum}{2}}{\color{OliveGreen}{4}}{\color{NavyBlue}{2}}{\color{BrickRed}{2}}{\color{Dandelion}{2}}{\color{Plum}{2}}{\color{OliveGreen}{3}}{\color{NavyBlue}{4}}{\color{BrickRed}{2}}{\color{Dandelion}{4}}{\color{Plum}{0}}{\color{OliveGreen}{3}}{\color{NavyBlue}{1}}{\color{BrickRed}{4}}\ldots
\end{align}

We can imagine the process of concatenating the Fibonacci sequence in base $5$ through this coloring technique by taking the digits $0,1,2,3,4$ in a piecemeal fashion extracted from each sequence $\left(\Phi_{5^k}(n)\right)_{n\in\mathbb{N}}$ and threading them into the appropriate place. Every digit in the Fibonacci concatenation in base $5$ belongs to a particular $\left(\Phi_{5^k}(n)\right)_{n\in\mathbb{N}}$. Further, all of the digits in ${\color{BrickRed}{\left(\Phi_{5^0}(n)\right)_{n\in\mathbb{N}}}}$ occur with a frequency of $1/5$, and all of the digits in ${\color{NavyBlue}{\left(\Phi_{5^1}(n)\right)_{n\in\mathbb{N}}}}$ occur with a frequency of $1/5$, and all of the digits in ${\color{OliveGreen}{\left(\Phi_{5^2}(n)\right)_{n\in\mathbb{N}}}}$ occur with a frequency of $1/5$, etc. That is, the limiting relative frequency of every digit $0,1,2,3,4$ in each place value of the Fibonacci concatenation base $5$ is $1/5$. Then the limiting relative frequency of each digit $0,1,2,3,4$ in the Fibonacci concatenation base $5$ is $1/5$. It follows that the concatenation of the Fibonacci sequence in base $5$ is \textit{simply normal}.
\end{proof}

Note that the result by Niederreiter \ref{lemma:Niederreiter} shows that the Fibonacci sequence is uniformly distributed modulo \textit{every} power of $5$. It immediately follows that the same coloring technique used in Theorem~\ref{thm:coloring} can be used to show that the Fibonacci concatenation is simply normal in every base $\beta=5^x$.
\begin{proposition}\label{thm:simplynormal5x}
In every base of the form $5^x$, the concatenation of the Fibonacci sequence is \textit{simply normal}.
\end{proposition}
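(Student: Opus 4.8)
The plan is to rerun the argument of Theorem~\ref{thm:coloring} with the base $5$ replaced throughout by $\beta=5^x$ and the digit alphabet $\{0,1,2,3,4\}$ replaced by $\{0,1,\ldots,5^x-1\}$. The only genuinely new ingredient is the base-$\beta$ analog of Proposition~\ref{prop:base5unif}: that for every $k\ge 0$ each of the digits $0,1,\ldots,5^x-1$ is uniformly distributed in one period of the place-value sequence $\left(\Phi_{\beta^k}(n)\right)$, where now $\Phi_{\beta^k}(n)=\left\lfloor F_n/5^{xk}\right\rfloor \bmod 5^x$. I would first note that since $\beta=5^x$ is a power of the single prime $5$, which is not a Wall-Sun-Sun prime, Lemma~\ref{thm:PiDigits} applies and guarantees that this sequence is periodic with period $N_k=\pi(\beta^{k+1})=\pi(5^{x(k+1)})$; this periodicity is exactly what the coloring technique needs.

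To establish the uniformity claim I would invoke Niederreiter's Lemma~\ref{lemma:Niederreiter} at the modulus $5^{x(k+1)}$: over one Pisano period each residue in $\{0,1,\ldots,5^{x(k+1)}-1\}$ occurs the same number of times, say $c$. The digit $\Phi_{\beta^k}(n)$ depends on $F_n$ only through $F_n \bmod 5^{x(k+1)}$, via the map $r\mapsto \left\lfloor r/5^{xk}\right\rfloor \bmod 5^x$. Writing $r=a\,5^{xk}+b$ with $0\le b<5^{xk}$ and $0\le a<5^x$ shows that this map sends exactly $5^{xk}$ residues to each target value $a$, so all of its fibers have the same size. It would then follow that each digit of $\{0,\ldots,5^x-1\}$ occurs exactly $c\,5^{xk}$ times in one period of $\left(\Phi_{\beta^k}(n)\right)$, giving the desired uniform distribution. (Equivalently, one could imitate the induction in Proposition~\ref{prop:base5unif}, peeling off the lower $\beta$-places one at a time.)

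With per-place uniformity in hand, I would transfer the coloring argument verbatim: color each digit of $\langle (F_{n,\beta})\rangle$ by the place value it occupies, so that the digits contributed by $\left(\Phi_{\beta^k}(n)\right)$ all receive color $k$. Every digit of the concatenation belongs to exactly one color class, and by the previous step each class has every symbol of $\{0,\ldots,5^x-1\}$ appearing with limiting relative frequency $1/5^x$. The hard part---the only place where real care is needed---will be the passage from ``each color class is uniform'' to ``the whole concatenation is uniform,'' since the number of distinct place values appearing among the first $t$ digits grows without bound and the low-order places contribute far more digits than the high-order ones. This is precisely the bookkeeping already carried out in Theorem~\ref{thm:coloring}, and it goes through because a convex combination of distributions each equal to the uniform distribution on $\{0,\ldots,5^x-1\}$ is again uniform, so the uneven mixing proportions are harmless. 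I would conclude that each digit of $\langle(F_{n,\beta})\rangle$ has limiting relative frequency $1/5^x=1/\beta$, hence that the concatenation is simply normal in base $5^x$.
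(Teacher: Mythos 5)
Your proposal is correct and follows the same overall route as the paper: Niederreiter's Lemma~\ref{lemma:Niederreiter}, uniform distribution of the digits in each place-value sequence $\left(\Phi_{\beta^k}(n)\right)$, and then the coloring technique of Theorem~\ref{thm:coloring}. The one place where you genuinely depart from the paper is the justification of per-place uniformity. The paper handles this step by asserting an ``induction on the result of Proposition~\ref{prop:base5unif},'' i.e., by replaying the peeling-off-lower-places induction with $5$ replaced by $5^x$; your fiber-counting argument replaces that induction entirely: since $\Phi_{\beta^k}(n)$ depends on $F_n$ only through $F_n \bmod 5^{x(k+1)}$, and the map $r\mapsto\left\lfloor r/5^{xk}\right\rfloor \bmod 5^x$ has every fiber of size exactly $5^{xk}$, Niederreiter's equidistribution modulo $5^{x(k+1)}$ transfers to the digits in one line. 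This is tighter and more self-contained than what the paper writes, and it would in fact also streamline Proposition~\ref{prop:base5unif} itself, whose inductive step is stated rather loosely. Both your proof and the paper's then defer to Theorem~\ref{thm:coloring} for the passage from per-place uniformity to simple normality of the concatenation, so you inherit exactly the same bookkeeping there as the paper does --- no gap on your side beyond what the paper already accepts.
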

\begin{proof}
From Lemma~\ref{lemma:Niederreiter} we know that in every base of the form $5^x$, the digits $0,1,2,\ldots,5^x-1$ in the Fibonacci sequence base $5^x$ are uniformly distributed. By induction on the result of Proposition \ref{prop:base5unif}, the digits in the sequences $\left(\Phi_{{(5^x)}^k}(n)\right)_{n\in\mathbb{N}}$ are uniformly distributed. We can apply the same coloring technique that was used in Theorem \ref{thm:coloring} to every base that is a power of $5$ because the digits are always uniformly distributed. It follows that the Fibonacci concatenation in every base of the form $\beta=5^x$ is \textit{simply normal}. 
\end{proof}

\begin{corollary}
In every base of the form $5^x$, the Fibonacci concatenation is \textit{normal}.
\end{corollary}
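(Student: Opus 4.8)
The plan is to upgrade the simple normality already established to full normality by way of Pillai's Lemma (Lemma~\ref{lemma:Pillai}), which asserts that a number is normal in base $\beta$ precisely when it is simply normal in every power $\beta^k$. The feature that makes this painless for $\beta = 5^x$ is that the family of bases $\{5^x : x \geq 0\}$ is closed under raising to powers: for each positive integer $k$ we have $(5^x)^k = 5^{xk}$, which is once more a power of $5$. Thus every auxiliary base demanded by Pillai's Lemma is of exactly the form already dispatched by Proposition~\ref{thm:simplynormal5x}, and no new distributional input is needed beyond Niederreiter's uniformity (Lemma~\ref{lemma:Niederreiter}).

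In detail, I would fix $\beta = 5^x$, take an arbitrary positive integer $k$, and set $\beta^k = 5^{xk}$. Proposition~\ref{thm:simplynormal5x} then guarantees simple normality of the Fibonacci concatenation in base $5^{xk}$, and feeding this into Pillai's Lemma for every $k$ would give normality in base $5^x$. The step that genuinely requires care --- and which I expect to be the main obstacle --- is making the two sides of this argument refer to the \emph{same} real number. Pillai's Lemma concerns a single fixed real number, here $y = \langle(F_{n,5^x})\rangle$, re-expanded in base $5^{xk}$; what Proposition~\ref{thm:simplynormal5x} delivers, however, is simple normality of the base-$5^{xk}$ concatenation $\langle(F_{n,5^{xk}})\rangle$, which is a priori a \emph{different} number. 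Regrouping the base-$5^x$ digits of $y$ into base-$5^{xk}$ digits does not reproduce $\langle(F_{n,5^{xk}})\rangle$, because the latter zero-pads each Fibonacci number (by up to $k-1$ digits) so that its base-$5^{xk}$ length is an integer, and it realigns the block boundaries to the start of each Fibonacci number.

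I would close this gap with a density estimate. Since $F_n$ grows exponentially, its base-$5^x$ representation has length $\Theta(n)$, so the digits contributed by $F_1, \ldots, F_N$ number $\Theta(N^2)$, whereas the padding zeros inserted at the $N$ Fibonacci boundaries total at most $(k-1)N = O(N)$. The discrepancy between $y$ and $\langle(F_{n,5^{xk}})\rangle$ is therefore confined to a set of positions of asymptotic density zero, which cannot perturb any limiting relative frequency; hence $y$ is simply normal in base $5^{xk}$ as well. Doing this for every $k$ and invoking Pillai's Lemma then yields that $\langle(F_{n,5^x})\rangle$ is normal in base $5^x$. I expect the bookkeeping around block realignment in this final density argument to be the only delicate part, the remainder being a direct assembly of Lemma~\ref{lemma:Pillai} and Proposition~\ref{thm:simplynormal5x}.
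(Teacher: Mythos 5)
You follow the paper's proof exactly at the top level: the paper's entire proof of this corollary is the citation of Proposition~\ref{thm:simplynormal5x} together with Lemma~\ref{lemma:Pillai}, using the same closure observation $(5^x)^k=5^{xk}$. You have also isolated a genuine subtlety that the paper passes over silently: Lemma~\ref{lemma:Pillai} requires simple normality of one fixed real number $y=\left<\left(F_{n,5^x}\right)\right>$ in every base $5^{xk}$, whereas Proposition~\ref{thm:simplynormal5x} provides simple normality of the different real numbers $\left<\left(F_{n,5^{xk}}\right)\right>$, each in its own base. These really are different reals (already in base $5$ versus base $25$ the two concatenations disagree from the first few digits), so the bridge you are trying to build is exactly what is missing.

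But your density estimate does not build it. The flaw is that a grouping into $k$-digit blocks is not stable under insertion of digits, even insertions of asymptotic density zero: every inserted digit permanently shifts the phase of all later blocks. Toy example: grouping $010101\ldots$ into pairs gives every block equal to $01$, while inserting a single leading $0$ gives the blocks $00,10,10,10,\ldots$, so one insertion changes the block frequencies entirely. In your situation, within the stretch of digits contributed by $F_n$, the base-$5^{xk}$ expansion of $y$ reads $k$-blocks at the phase $\left(\sum_{m<n} d_m\right) \bmod k$ (where $d_m$ is the base-$5^x$ length of $F_m$), while $\left<\left(F_{n,5^{xk}}\right)\right>$ reads blocks aligned so that $F_n$ ends at a block boundary. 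Whenever those two phases differ --- the typical case --- the two digit strings disagree throughout essentially that whole stretch, so the set of disagreement positions has positive density (near $1$), not density zero, and the step ``hence $y$ is simply normal in base $5^{xk}$'' fails. To make an argument of your shape work you would need to know that the $k$-block frequencies of the base-$5^x$ digit string are the same for every alignment phase; but phase-independent equidistribution of $k$-blocks for all $k$ is essentially normality in base $5^x$, the very thing being proved, so that assumption begs the question. A genuine repair has to produce aligned-block equidistribution directly --- for example from Lemma~\ref{lemma:Niederreiter}, which gives joint uniformity of any $k$ consecutive digit places of the Fibonacci numbers modulo $5^{x(i+k)}$, combined with Champernowne-style bookkeeping for blocks that straddle Fibonacci boundaries and for the varying phases. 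Note that the paper's one-line proof makes the same unjustified identification you spotted, so closing this gap requires content beyond what either argument supplies.
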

\begin{proof}
This follows immediately from Proposition~\ref{thm:simplynormal5x} and Lemma \ref{lemma:Pillai}.
\end{proof}

\section{Bases of the form $2^y$}
Consider the Fibonacci sequence in base 2 where the only digits are $0$ and $1$. We are looking at the sequence:
\[ F_{n,2}= 0, 1, 1, 10, 11, 101, 1000, 1101, 10101, 100010, 110111, 1011001, 10010000, 11101001,\ldots\]

We would like to apply the same coloringing technique we used in base $5$ to the Fibonacci concatenation in base $2$. Unfortunately, the frequency of $0$'s and $1$'s is not equal in the first five place values.
\begin{table}[H]
\centering
\begin{tabular}{c|c|c|c|c}
$k$'s place&$\pi(2^{k+1})$&$\left(\Phi_{2^k}(n)\right)$&$0$'s&$1$'s\\
\hline
$2^0$'s place&3&$0,1,1\circlearrowleft$&$1$&$2$\\
\hline
$2^1$'s place&$6$&$0, 0, 0, 1, 1, 0\circlearrowleft$&$4$&$2$\\
\hline
$2^2$'s place&$12$&$0, 0, 0, 0, 0, 1, 0, 1, 1, 0, 1, 0\circlearrowleft$&$8$&$4$\\
\hline
$2^3$'s place&$24$&$0, 0, 0, 0, 0, 0, 1, 1, 0, 0, 0, 1, 0, 1, 1, 0, 1, 1, 1, 0, 1, 0, 1, 0\circlearrowleft$&$10$&$14$\\
\hline
$2^4$'s place&$48$&$0, 0, 0, 0, 0, 0, 0, 0, 1, 0, 1, 1, 1, 0, 1, 0, 1, 1, 1, 1, 0, 0, 0, 1,$&&\\
&&$0, 1, 1, 0, 1, 1, 0, 1, 0, 0, 0, 0, 1, 1, 0, 0, 0, 0, 1, 0, 1, 0, 1, 0\circlearrowleft$&$28$&$20$
\end{tabular}
\caption{Distribution of $0$'s and $1$'s in the first five place values of the Fibonacci sequence in base $2$.}
\end{table}
However, from Theorem~\ref{Jacobson} below, the sequences $\left(\Phi_{2^k}(n)\right)_{n=1}^{N_k}$ are uniformly distributed for all $k\vargeq5$.
For example, in the $2^5$'s place, there are exactly forty eight $0$'s and forty eight $1$'s. 
\begin{align}
\nonumber \left(\Phi_{2^5}(n)\right)_{n=1}^{96}=&0, 0, 0, 0, 0, 0, 0, 0, 0, 1, 1, 0, 0, 1, 1, 1, 0, 1, 0, 0, 1, 0, 1, 1,\\
\nonumber &1, 0, 1, 0, 1, 1, 1, 0, 0, 1, 1, 0, 1, 1, 1, 1, 0, 1, 1, 1, 0, 0, 0, 1,\\
\nonumber &0, 1, 1, 0, 1, 1, 0, 1, 1, 1, 0, 1, 0, 0, 0, 1, 1, 0, 0, 1, 0, 0, 0, 0,\\
\nonumber &1, 1, 0, 0, 0, 0, 1, 1, 1, 1, 0, 1, 1, 0, 0, 1, 1, 0, 1, 0, 1, 0, 1, 0\circlearrowleft .
\end{align}

The proof technique that we will use to demonstrate normality of the Fibonacci concatenation in bases of the form $2^y$ is very similar to the coloring technique, except that we will ignore the first five place values.
\begin{theorem}\label{thm:LimitingFreq}
For a base $\beta$, if there exists a $K$ such that for all $k\vargeq K$, the digits $0, 1, 2,\ldots, \beta-1$ in the $\beta^k$'s place of the Fibonacci sequence base $\beta$ occur with a frequency of $1/\beta$, then the Fibonacci concatenation in base $\beta$ is \textit{simply normal}.
\end{theorem}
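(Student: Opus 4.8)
The plan is to adapt the coloring argument of Theorem~\ref{thm:coloring} by decomposing the digits of the concatenation according to their place value (column), and then to argue that the finitely many ``bad'' initial columns are asymptotically invisible while each remaining column contributes the digit $a$ with frequency $1/\beta$. First I would reduce to prefixes that end at a Fibonacci boundary. Fix a digit $a$, let $N_a(M)$ denote the number of occurrences of $a$ among the digits of $F_{1,\beta},\dots,F_{M,\beta}$, and let $T(M)=\sum_{n=1}^{M}d(n)$ be the total number of digits written, where $d(n)$ is the number of base-$\beta$ digits of $F_n$. It suffices to show $N_a(M)/T(M)\to 1/\beta$: a single Fibonacci block has length $d(M{+}1)=O(M)$ while $T(M)$ grows quadratically, so truncating a prefix in the middle of a Fibonacci number changes the ratio by $o(1)$, and the general $t\to\infty$ limit follows from the limit along Fibonacci boundaries by a squeeze.

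Next I would set up the column decomposition $N_a(M)=\sum_{k\ge 0}c_a^{(k)}(M)$ and $T(M)=\sum_{k\ge 0}t^{(k)}(M)$, where $c_a^{(k)}(M)$ counts the $n\le M$ whose $\beta^k$'s-place digit equals $a$, and $t^{(k)}(M)=\#\{n\le M:F_n\ge\beta^k\}$ counts how many of the first $M$ Fibonacci numbers actually possess a $\beta^k$'s-place digit. The crucial quantitative input is Binet's formula: $F_n=\Theta(\phi^n)$, so $d(n)=\Theta(n)$ and hence $T(M)=\Theta(M^2)$. Since each Fibonacci number contributes at most one digit to each of the columns $k=0,1,\dots,K-1$, those first $K$ columns together carry at most $KM$ digits, a fraction $\le KM/T(M)=O(1/M)\to 0$ of the total. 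This is exactly what lets us ``ignore the first $K$ place values'': the non-uniform initial columns are negligible, and it remains to handle the columns $k\ge K$, each of which is periodic (Lemma~\ref{thm:PiDigits}) with every digit occurring with frequency $1/\beta$ by hypothesis.

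For $k\ge K$ I would write $c_a^{(k)}(M)=\tfrac1\beta\,t^{(k)}(M)+e_k(M)$ and try to show $\sum_{k\ge K}e_k(M)=o(T(M))$, which combined with the negligibility of the first $K$ columns yields $N_a(M)/T(M)\to 1/\beta$ and hence simple normality. For every column whose period $N_k=\pi(\beta^{k+1})$ fits inside the available range of indices, the partial-period error is bounded by $|e_k(M)|=O(N_k)$; because the periods grow geometrically (Proposition~\ref{conj:PiGrowth2}) these errors sum to $O(N_{k^\ast})=O(M)=o(T(M))$, where $k^\ast\approx\log_\beta M$ indexes the largest complete column. \textbf{The main obstacle} is the opposite tail: the columns $k>k^\ast$ whose period exceeds the range of indices available are cut off inside a single period, and these high-order digits of the large Fibonacci numbers account for almost all of $T(M)$, so the mere \emph{limiting} frequency $1/\beta$ of each such column does not by itself pin down its truncated count. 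I expect the heart of the proof to be showing that these partial-period deviations cancel in aggregate; I would attempt this by exploiting the exact per-period equidistribution together with the nesting $\pi(\beta^{k})\mid\pi(\beta^{k+1})$ (Lemma~\ref{nesting}) to reorganize the tail into complete periods of lower columns, thereby controlling $\sum_{k>k^\ast}e_k(M)$.
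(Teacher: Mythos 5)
Your proposal stalls exactly where you say it does, and that obstacle is a genuine gap: nothing in your argument (or in the tools you invoke) controls the columns $k>k^\ast$ whose period $N_k=\pi(\beta^{k+1})\approx c\,\beta^{k}$ vastly exceeds the number $M-n_0(k)$ of indices available before the truncation point. Your proposed repair --- using the nesting $\pi(\beta^{k})\mid\pi(\beta^{k+1})$ to reorganize the tail into complete periods of lower columns --- cannot work as described: nesting relates \emph{full} periods of different columns to one another, but says nothing about the contents of a window occupying a vanishing fraction of a single period. Worse, the window $[n_0(k),M]$ of a tail column sits precisely where the $\beta^k$'s-place digit of $F_n$ is the leading or near-leading digit of $F_n$, and leading digits of Fibonacci numbers are Benford-distributed (the digit $1$ beats the digit $\beta-1$), not uniform; so per-period equidistribution gives essentially no information there. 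Since, as you correctly compute, the complete columns carry only $O(M\log_\beta M)$ of the $\Theta(M^2)$ digits, the tail is not an error term: it is almost everything. Indeed the abstract principle one would need --- periodic columns, each uniform over its full period, with geometrically growing periods and staggered starting points, always yield a simply normal concatenation --- is false: one can fill the truncated windows of the high columns with all $1$'s, consistently with uniformity over each full period and with nesting. Any correct proof must therefore use properties of the Fibonacci sequence beyond the theorem's hypothesis, essentially the distribution of digits counted from the \emph{leading} end (Benford-type equidistribution statements), which is a different and much harder kind of input than periodicity modulo $\beta^{k+1}$.

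You should know, however, that the paper's own proof does not close this gap either; it does not even acknowledge it. The published argument consists solely of your second step --- the finitely many non-uniform columns $k<K$ contribute a vanishing proportion (``Lebesgue measure $0$'') of the digits --- followed by the implicit assertion that per-column limiting frequency $1/\beta$ transfers to the concatenation, which is exactly the interchange of limits you isolate as the main obstacle. So, compared with the paper: your reduction to prefixes ending at Fibonacci boundaries, the estimate $T(M)=\Theta(M^2)$, the $O(1/M)$ bound on the first $K$ columns, and the $O(N_{k^\ast})=O(M)$ bound on the partial-period errors of the complete columns are all correct and strictly more precise than anything in the paper's proof; the unresolved tail is a genuine gap in both, and as written neither argument establishes the theorem.
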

\begin{proof}
Suppose in base $\beta$ that the Fibonacci concatentaion is not uniformly distributed for the first $K$ places, but for all $k\vargeq K$, the digits in the $\beta^k$'s place are uniformly distributed. The digits in the $\beta^0$'s place, $\beta^1$'s place, $\ldots, \beta^K$'s place do not effect the limiting frequency of each digit, in the sense that the digits in these first place values contribute a Lebesgue measure $0$ number of digits to the Fibonacci concatenation base $\beta$. There are a finite number of place values that are not uniformly distributed, and an infinite number of place values that are uniformly distributed. It follows that the measure of the digits in the $\beta^0$'s place, $\beta^1$'s place, $\ldots, \beta^K$'s place is zero. 
\end{proof}

 Another way to say this is that the digits in the $\beta^0$'s place, $\beta^1$'s place, $\ldots, \beta^K$'s place get overwhelmed in the limit. Suppose we choose a digit at random from the Fibonacci concatenation in such a base; the odds of choosing a digit from a place value that is not uniformly distributed has probability $0$. With this theorem in mind, we turn our attention back to the Fibonacci sequence in base $2$.

The distribution of digits of the Fibonacci sequence modulo $2^k$ is well established. Jacobson \cite{[Jacobson]} completely described the number of occurrences of $0$'s and $1$'s. He observed that for the Fibonacci sequence modulo $2^k$, there is a ``type of stability" that occurs when $k\vargeq 5$. 
\begin{theorem}[Jacobson]\label{Jacobson}
Denote the number of occurrences of $z$ as a residue in one (shortest) period of the Fibonacci sequence modulo $m$ by $v(m,z)$. Then for the Fibonacci sequence modulo~$2^k$ and for $k\vargeq5$, we have:
\begin{align*}
	v(2^k,z)=&\begin{cases} 
     						 1 & \text{if}\  z\equiv3\mod4\\
   						    2 & \text{if}\  z\equiv0\mod8\\
  						    3 & \text{if}\  z\equiv1\mod4\\
							 8 & \text{if}\  z\equiv2\mod32\\
							0 & \text{for all other residues.} 
  						\end{cases}
\end{align*}
\end{theorem}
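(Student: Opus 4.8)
The plan is to induct on $k \geq 5$ and exploit how each residue modulo $2^k$ lifts to residues modulo $2^{k+1}$. Since $2$ is not a Wall--Sun--Sun prime, Lemma~\ref{lemma:WallCriteria} gives $\pi(2^k) = 3\cdot 2^{k-1}$ for every $k \geq 1$; write $T = \pi(2^k)$, so the period modulo $2^{k+1}$ has length $2T$. Each residue $z$ modulo $2^k$ has exactly the two lifts $z$ and $z + 2^k$ modulo $2^{k+1}$, and over one period modulo $2^{k+1}$ the positions reducing to $z$ modulo $2^k$ are met $2\,v(2^k,z)$ times, so
\[
v(2^{k+1},z) + v(2^{k+1},z+2^k) = 2\,v(2^k,z).
\]
The base case $k=5$ (period length $48$) is verified by direct computation against the five cases, and the entire content of the inductive step is to determine \emph{how} this total splits between the two lifts.

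First I would pin down the increment $F_{n+T} - F_n$ modulo $2^{k+1}$. Starting from the addition formula $F_{n+T} = F_{T+1}F_n + F_T F_{n-1}$, I note that $F_T \equiv 0 \pmod{2^k}$ and $F_{T+1} \equiv 1 \pmod{2^k}$ by definition of the Pisano period, while the classical $2$-adic valuation formula for Fibonacci numbers gives $v_2(F_T) = v_2(3\cdot 2^{k-1}) + 2 = k+1$, hence $F_T \equiv 0 \pmod{2^{k+1}}$. Moreover $\pi(2^{k+1}) = 2T \neq T$ forces $F_{T+1} \not\equiv 1 \pmod{2^{k+1}}$, so necessarily $F_{T+1} \equiv 1 + 2^k \pmod{2^{k+1}}$. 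Substituting and reducing collapses the increment to
\[
F_{n+T} \equiv F_n + 2^k\,(F_n \bmod 2) \pmod{2^{k+1}}.
\]
Thus the sister positions $n$ and $n+T$ carry the \emph{two} distinct lifts of $F_n$ exactly when $F_n$ is odd, and carry a \emph{single} repeated lift when $F_n$ is even; since $F_n$ is odd precisely when $3 \nmid n$, the splitting is governed entirely by $n \bmod 3$.

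For the odd residues this settles everything at once. Every position with $F_n \equiv z \pmod{2^k}$ for odd $z$ has $F_n$ odd, so each of the $v(2^k,z)$ sister-pairs deposits one occurrence onto each lift, giving $v(2^{k+1},z) = v(2^{k+1},z+2^k) = v(2^k,z)$. The odd counts are therefore stable in $k$, matching the constants $1$ for $z \equiv 3 \pmod 4$ and $3$ for $z \equiv 1 \pmod 4$ once the base case is checked.

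The main obstacle is the even residues. When $F_n$ is even the sister-pair lands twice on one lift, so $2\,v(2^k,z)$ is no longer forced to split evenly; the two new counts depend on \emph{which} lift each occurrence-position actually carries, which is finer data than $v(2^k,z)$ records. To close the induction I would strengthen the hypothesis to track, for each even residue, the distribution of its positions between the two lifts. The natural engine is the subsequence of even values $F_{3m}$, which satisfies $F_{3m} = 4F_{3(m-1)} + F_{3(m-2)}$ and factors as $F_{3m} = 2H_m$ with $H_m = 4H_{m-1} + H_{m-2}$, reducing the even residues modulo $2^k$ to the distribution of a Pell-type sequence modulo $2^{k-1}$; equivalently one may use the sharper congruences (for instance $F_{3m} \equiv 2 \pmod{32}$ when $m$ is odd, $v_2(F_{3m}) = 3$ when $m \equiv 2 \pmod 4$, and $v_2(F_{3m}) = v_2(m)+2$ when $4 \mid m$) extracted from $F_{3m} = 5F_m^3 + 3(-1)^m F_m$. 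These explain why only $z \equiv 0 \pmod 8$ and $z \equiv 2 \pmod{32}$ survive, with counts $2$ and $8$, while every other even residue has count $0$. Making this lift-tracking precise and uniform in $k$ is the crux of the argument.
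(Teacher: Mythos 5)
The first thing to say is that the paper contains no proof of this statement: it is quoted directly from Jacobson's work (cited as \cite{[Jacobson]}), so there is no internal argument to compare yours against, and your proposal has to stand on its own. Much of it does. The lifting framework is sound: $\pi(2^k)=3\cdot 2^{k-1}$, so $\pi(2^{k+1})=2T$; the $2$-adic valuation formula gives $v_2(F_T)=(k-1)+2=k+1$, hence $F_T\equiv 0\pmod{2^{k+1}}$; minimality of the Pisano period then forces $F_{T+1}\equiv 1+2^k\pmod{2^{k+1}}$; and the addition formula collapses to $F_{n+T}\equiv F_n+2^k\,(F_n\bmod 2)\pmod{2^{k+1}}$. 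This correctly closes the induction for the odd residues: each of the $v(2^k,z)$ sister pairs $\{n,n+T\}$ deposits one occurrence on each lift, so $v(2^{k+1},z)=v(2^{k+1},z+2^k)=v(2^k,z)$, and the classes $1,3\pmod 4$ are preserved under adding $2^k$. The count-zero even residues also close, although you credit the wrong mechanism: if $w$ is even with $w\not\equiv 0\pmod 8$ and $w\not\equiv 2\pmod{32}$, then its reduction $z=w\bmod 2^k$ satisfies the same conditions (since $32\mid 2^k$ for $k\geq 5$), so $v(2^{k+1},w)\leq 2v(2^k,z)=0$; no facts about $F_{3m}$ are needed for this.

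The genuine gap is exactly the two surviving even classes, which carry the entire content of the theorem. For $z\equiv 0\pmod 8$ your sum relation only yields $v(2^{k+1},z)+v(2^{k+1},z+2^k)=4$, and for $z\equiv 2\pmod{32}$ it yields $16$; because $F_n$ is even at all such occurrences, each sister pair lands both of its occurrences on a \emph{single} lift, so nothing you have proved excludes splits like $(0,4)$ or $(16,0)$, and an imbalance at one level would propagate and destroy the claimed values for all larger $k$. You acknowledge this and propose to strengthen the induction by tracking lifts through $H_m=F_{3m}/2$ with $H_m=4H_{m-1}+H_{m-2}$, together with the $2$-adic facts about $F_{3m}$. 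Those facts are all true (for instance $F_{3m}=5F_m^3+3(-1)^mF_m$ does give $F_{3m}\equiv 2\pmod{32}$ for odd $m$, and $v_2(F_{3m})=v_2(m)+2$ when $4\mid m$), but they only constrain \emph{which} even residues can occur, not \emph{how many times} each one occurs: getting the counts $2$ and $8$ uniformly in $k$ requires an equidistribution statement for $H_m$ modulo $2^k$ that is a problem of the same type and difficulty as the original theorem, and it is never carried out. As written, the proposal establishes the theorem only for odd $z$ and for the residues of count zero; the cases $v=2$ and $v=8$ — the heart of Jacobson's ``stability'' — remain open.
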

Jacobson recognized that after the $2^4$'s place, there is a regular pattern that emerges. This  stability of the Fibonacci sequence modulo $2$ allows us to make observations about the Fibonacci concatenation in base $2$. 

\begin{theorem}\label{thm:UniDistBase2}
For all $k\vargeq 5$, the digits $0$ and $1$ in $\left(\Phi_{2^k}(n)\right)_{n\in\mathbb{N}}$ are uniformly distributed.
\end{theorem}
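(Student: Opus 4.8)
The plan is to reduce the uniform-distribution claim to a counting problem about residues modulo $2^{k+1}$ and then resolve it with Jacobson's formula (Theorem~\ref{Jacobson}). First I would pin down precisely when the $2^k$'s digit equals $1$. Writing $F_n = q\cdot 2^{k+1} + r$ with $0 \le r < 2^{k+1}$, a short check shows $\Phi_{2^k}(n) = \lfloor F_n/2^k\rfloor \bmod 2 = 1$ exactly when $r = F_n \bmod 2^{k+1} \in [2^k, 2^{k+1})$, i.e.\ when the reduced residue has its top bit set. Hence the $2^k$'s digit depends only on $F_n \bmod 2^{k+1}$, and counting the $1$'s in one period of $\left(\Phi_{2^k}(n)\right)$ amounts to counting, with multiplicity, how many residues of one Fibonacci period modulo $2^{k+1}$ land in the upper half $[2^k,2^{k+1})$.

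Next I would fix the period length. Since $2$ is not a Wall-Sun-Sun prime ($\pi(2)=3 \neq 6 = \pi(4)$), Lemma~\ref{thm:PiDigits} gives $N_k = \pi(2^{k+1})$, and Wall's criteria (Lemma~\ref{lemma:WallCriteria}) with $t=1$ yield $\pi(2^{k+1}) = 3\cdot 2^k$. So it suffices to show that the weighted count
\[
S \;=\; \sum_{z=2^k}^{2^{k+1}-1} v(2^{k+1},z)
\]
equals exactly half the period, $3\cdot 2^{k-1}$; then there are as many $1$'s as $0$'s and each digit occurs with frequency $1/2$.

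To evaluate $S$ I would apply Jacobson's five cases term by term, counting how many $z$ in the window $[2^k,2^{k+1})$ fall in each relevant residue class. The window is a block of $2^k$ consecutive integers, and for $k \ge 5$ its length is divisible by $32$, so each class modulo $4$ contributes $2^{k-2}$ residues, each class modulo $8$ contributes $2^{k-3}$, and each class modulo $32$ contributes $2^{k-5}$. The four nonzero Jacobson classes are pairwise disjoint, and multiplying each count by its weight gives $2^{k-2}$ (from $z\equiv 3\bmod 4$), $2\cdot 2^{k-3} = 2^{k-2}$ (from $z\equiv 0\bmod 8$), $3\cdot 2^{k-2}$ (from $z\equiv 1\bmod 4$), and $8\cdot 2^{k-5} = 2^{k-2}$ (from $z\equiv 2\bmod 32$). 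These sum to $6\cdot 2^{k-2} = 3\cdot 2^{k-1}$, exactly half the period, completing the argument.

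The main obstacle is the alignment hidden in the hypothesis $k\ge 5$: the finest modulus appearing in Jacobson's formula is $32 = 2^5$, so the residue classes it distinguishes are equidistributed in the window $[2^k,2^{k+1})$ only once $32$ divides its length $2^k$. This is exactly why the four contributions cancel to precisely one half at $k=5$ and fail for $k \le 4$; for $k=4$ the window $[16,32)$ contains no residue $\equiv 2 \bmod 32$, so the $v=8$ term drops out and the sum is unbalanced, producing the $20$-versus-$28$ split recorded in the table. The crux is therefore verifying that the cancellation is \emph{exact} rather than merely approximate, and it is precisely the clean divisibility available for $k \ge 5$ that makes this hold on the nose.
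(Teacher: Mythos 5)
Your proposal is correct and takes essentially the same route as the paper: both apply Jacobson's formula to the Fibonacci sequence modulo $2^{k+1}$, and both hinge on the fact that his residue classes (mod $4$, $8$, $32$) are determined by the low-order five bits, so that for $k\geq 5$ they are equidistributed between the lower half $[0,2^k)$ and the upper half $[2^k,2^{k+1})$ of the residues. The paper packages this as a one-to-one correspondence between residues with top bit $0$ and top bit $1$, whereas you tally the weighted count over the upper window and check that it equals half the period $3\cdot 2^k$ --- the same cancellation, computed explicitly rather than paired off (and your $k=4$ remark correctly accounts for the $28$-versus-$20$ split in the paper's table).
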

\begin{proof}
Consider the Fibonacci sequence in base $2$ and let $k\vargeq5 $. We will represent numbers written in binary modulo $2^{k+1}$ in the following way:
\[
1\underbrace{\ldots\ldots\ldots}_\text{$k-5$ \text{ digits}} a_4a_3a_2a_1a_0 \quad \text{ and } \quad 0\underbrace{\ldots\ldots\ldots}_\text{$k-5$ \text{ digits}}a_4a_3a_2a_1a_0.
\]
Consider the occurence of numbers congruent to $3$ modulo $4$ in the Fibonacci sequence modulo $2^{k+1}$. Every number that has a residue of $3$ modulo $4$ occurs exactly once in one (shortest) period of the Fibonacci sequence modulo $2^{k+1}$. That is, the numbers 
\[
1\underbrace{\ldots\ldots\ldots}_\text{$k-5$ \text{ digits}} a_4a_3a_211 \quad \text{ and } \quad 0\underbrace{\ldots\ldots\ldots}_\text{$k-5$ \text{ digits}}a_4a_3a_211.
\]
each occur exactly once in $\left(\Phi_{2^k}(n)\right)_{n=1}^{N_k}$. Next, consider the occurence of numbers congruent to $0$ modulo $8$ in the Fibonacci sequence modulo $2^{k+1}$. Every number that has a residue of $0$ modulo $8$ occurs exactly twice in one (shortest) period of the Fibonacci sequence modulo $2^{k+1}$. That is, the numbers 
\[
1\underbrace{\ldots\ldots\ldots}_\text{$k-5$ \text{ digits}} a_4a_3000 \quad \text{ and } \quad 0\underbrace{\ldots\ldots\ldots}_\text{$k-5$ \text{ digits}}a_4a_3000.
\]
each occur exactly twice in $\left(\Phi_{2^k}(n)\right)_{n=1}^{N_k}$. Next, consider the occurence of numbers congruent to $1$ modulo $4$ in the Fibonacci sequence modulo $2^{k+1}$. Every number that has a residue of $1$ modulo $4$ occurs exactly three times in one (shortest) period of the Fibonacci sequence modulo $2^{k+1}$. That is, the numbers 
\[
1\underbrace{\ldots\ldots\ldots}_\text{$k-5$ \text{ digits}} a_4a_3a_201 \quad \text{ and } \quad 0\underbrace{\ldots\ldots\ldots}_\text{$k-5$ \text{ digits}}a_4a_3a_201.
\]
each occur exactly three times in $\left(\Phi_{2^k}(n)\right)_{n=1}^{N_k}$. Finally, consider the occurence of numbers congruent to $2$ modulo $32$ in the Fibonacci sequence modulo $2^{k+1}$. Every number that has a residue of $2$ modulo $32$ occurs exactly eight times in one (shortest) period of the Fibonacci sequence modulo $2^{k+1}$. That is, the numbers 
\[
1\underbrace{\ldots\ldots\ldots}_\text{$k-5$ \text{ digits}}00010 \quad \text{ and } \quad 0\underbrace{\ldots\ldots\ldots}_\text{$k-5$ \text{ digits}}00010.
\]
each occur exactly eight times in $\left(\Phi_{2^k}(n)\right)_{n=1}^{N_k}$. All other residues occur exactly zero times in the Fibonacci sequence modulo $2^{k+1}$ for $k\vargeq 5$. Then there is a one-to-one correspondence between numbers in the smallest period with a 1 in the $2^k$ digit and numbers with a 0 in the $2^k$ digit. Hence, the digits in the Fibonacci sequence are uniformly distributed in every $2^k$'s place for $k\vargeq 5$. 
\end{proof}
Now that we know that the digits $0$ and $1$ are uniformly distributed for sufficiently large place values, we have the following result:

\begin{corollary}\label{SimplyNormalPowers2}
The Fibonacci concatenation is \textit{simply normal} in every base of the form $\beta=2^k$.
\end{corollary}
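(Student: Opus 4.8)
The plan is to deduce this from Theorem~\ref{thm:LimitingFreq} applied to $\beta = 2^y$: it suffices to exhibit a threshold $K$ so that, for every $k \geq K$, the base-$2^y$ digits $0,1,\dots,2^y-1$ in the $(2^y)^k$'s place of the Fibonacci sequence occur with frequency $1/2^y$. For $y=1$ this is exactly the content of Theorem~\ref{thm:UniDistBase2} (with $K=5$), so the real work is to promote that theorem from single binary digits to blocks of $y$ consecutive binary digits. The engine is the same stability phenomenon isolated by Jacobson (Theorem~\ref{Jacobson}), and I would first record the single consequence of it that drives everything: for every $m\geq 5$, the multiplicity $v(2^m,z)$ of a residue $z$ in one period of the Fibonacci sequence modulo $2^m$ depends only on $z \bmod 32$, i.e.\ only on the five lowest binary digits of $z$. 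This is immediate by inspecting Jacobson's five cases, each of which is a congruence condition modulo $4$, $8$, or $32$.

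Next I would translate the $(2^y)^k$'s place into binary. A base-$2^y$ digit in position $k$ is exactly the block of binary digits in positions $yk,\, yk+1,\dots, yk+y-1$, and these are precisely the top $y$ bits of $F_n \bmod 2^{y(k+1)}$. Writing a residue as $z = d\cdot 2^{yk} + \ell$ with $d \in \{0,\dots,2^y-1\}$ the target digit and $\ell \in \{0,\dots,2^{yk}-1\}$ the lower bits, the key point is that whenever $yk \geq 5$ the top block lies strictly above the five lowest bits, so $z \bmod 32 = \ell \bmod 32$ is unaffected by $d$. By the observation above, the multiplicity $v\!\left(2^{y(k+1)},\, d\cdot 2^{yk} + \ell\right)$ is then independent of $d$.

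Summing over $\ell$, the number of $n$ in one Pisano period $\pi(2^{y(k+1)})$ for which $\Phi_{(2^y)^k}(n) = d$ equals $\sum_{\ell} v\!\left(2^{y(k+1)},\, d\cdot 2^{yk}+\ell\right)$, and this total is the same for each of the $2^y$ values of $d$. Hence every base-$2^y$ digit occupies exactly a $1/2^y$ fraction of the period, so the digits in the $(2^y)^k$'s place are uniformly distributed for all $k$ with $yk \geq 5$. Taking $K = \lceil 5/y\rceil$ and invoking Theorem~\ref{thm:LimitingFreq} then yields simple normality in base $2^y$. The main obstacle, and the one genuinely new ingredient beyond the base-$2$ case, is that simple normality in base $2^y$ requires the \emph{joint} distribution of the $y$ binary digits in a block to be uniform over all $2^y$ patterns, not merely the marginal uniformity of each bit proved in Theorem~\ref{thm:UniDistBase2}; the argument above delivers the joint statement for free precisely because the multiplicity function factors through $z \bmod 32$, but one must be careful at the boundary, where the block of $y$ bits overlaps the five lowest bits (for instance $y=2$, $k=2$ genuinely fails), which is exactly why the correct threshold is $yk \geq 5$ rather than $k \geq 1$.
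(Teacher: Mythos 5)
Your proof is correct, and it runs on the same engine as the paper's---Jacobson's stability theorem (Theorem~\ref{Jacobson}) feeding into Theorem~\ref{thm:LimitingFreq}---but it is actually more complete than what the paper wrote down, and the comparison is worth recording. The paper's proof of Corollary~\ref{SimplyNormalPowers2} is a one-line citation of Theorem~\ref{thm:UniDistBase2} and Theorem~\ref{thm:LimitingFreq}; however, Theorem~\ref{thm:UniDistBase2} asserts only the \emph{marginal} uniformity of individual binary digits, while applying Theorem~\ref{thm:LimitingFreq} in base $2^y$ requires the \emph{joint} uniformity of blocks of $y$ consecutive bits over all $2^y$ patterns, which does not follow formally from marginal uniformity. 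You correctly identify this as the crux and supply the missing step: since for $m \geq 5$ the multiplicity $v(2^m, z)$ depends only on $z \bmod 32$, writing $z = d \cdot 2^{yk} + \ell$ shows that the count of each base-$2^y$ digit $d$ in the $(2^y)^k$'s place is independent of $d$ once $yk \geq 5$, giving the threshold $K = \lceil 5/y \rceil$. Your cautionary example ($y=2$, $k=2$) is also genuinely correct: there the digit block includes bit $4$, and the counts come out $28, 20, 28, 20$ rather than uniform, consistent with the paper's table showing $28$ zeros and $20$ ones in the $2^4$'s place. In fairness to the paper, its \emph{proof} of Theorem~\ref{thm:UniDistBase2} implicitly establishes the joint statement (the correspondence there fixes the five lowest bits and lets all higher bits vary freely), but as \emph{stated} that theorem concludes only bit-level uniformity, so your explicit block-level argument is what makes the corollary's deduction rigorous rather than merely plausible.
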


\begin{proof}
This immediately follows from Theorem~\ref{thm:LimitingFreq} and Theorem ~\ref{thm:UniDistBase2}.
\end{proof}
Now that we know every base that is a power of $2$ is simply normal, we can invoke Borel's definition of normality and achieve a stronger result:
\begin{theorem}
The Fibonacci concatenation is \textit{normal} in every base of the form $\beta=2^y$.
\end{theorem}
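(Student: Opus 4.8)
The plan is to run exactly the reduction used for the base-$5^x$ corollary: pass from simple normality to full normality via Pillai's criterion (Lemma~\ref{lemma:Pillai}). Fix $y\geq 1$ and let $x=\langle (F_{n,2^y})\rangle$ denote the base-$2^y$ concatenation. By Pillai, $x$ is normal in base $2^y$ if and only if $x$ is simply normal in base $(2^y)^k=2^{yk}$ for every positive integer $k$. Since each $2^{yk}$ is itself a power of $2$, this looks like an immediate consequence of Corollary~\ref{SimplyNormalPowers2}; the real content, however, is to verify what simple normality in base $2^{yk}$ actually asks of the \emph{fixed} number $x$, and to supply it.

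The steps I would carry out are as follows. First, unwind the requirement: a base-$2^{yk}$ digit of $x$ is a block of $k$ consecutive base-$2^y$ digits of $x$, which is a contiguous window of $yk$ binary digits of the Fibonacci numbers, aligned at a multiple of $y$. So I must show that, across one Pisano period, these $yk$-bit windows are uniformly distributed over their $2^{yk}$ possible values for all sufficiently high windows. Second, I would discard the negligible contributions: the finitely many low place-values and the blocks straddling the boundary between consecutive Fibonacci numbers contribute a density-zero set of positions, exactly as in the measure-zero argument of Theorem~\ref{thm:LimitingFreq}, so only windows lying above the $2^4$ place matter. Third, I would feed Jacobson's count (Theorem~\ref{Jacobson}) into this: the multiplicity $v(2^M,z)$ depends only on $z \bmod 32$, that is, only on the lowest five bits of $z$. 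Hence, for any window of bit-positions all at least $5$, each admissible low-bit pattern is completed by every choice of the higher bits with one and the same multiplicity; summing over the admissible low patterns, every pattern of window bits occurs equally often. This is precisely the bijection of Theorem~\ref{thm:UniDistBase2}, promoted from a single top bit to a whole window. Uniform distribution of the $yk$-bit windows gives simple normality of $x$ in base $2^{yk}$, and Pillai then closes the argument.

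The main obstacle is this promotion from one digit to a block, that is, the \emph{joint} equidistribution of several consecutive binary place-values, which is strictly stronger than the single-digit uniformity already established in Theorem~\ref{thm:UniDistBase2}. The crucial structural input is that Jacobson's multiplicities are insensitive to the high bits of the residue, so once a window clears the $2^4$ place it is filled uniformly; the bookkeeping around window alignment (multiples of $y$) and boundary-crossing blocks is then routine via Theorem~\ref{thm:LimitingFreq}. I would also flag one point of care: Corollary~\ref{SimplyNormalPowers2} cannot be quoted as a black box inside Pillai, because ``simply normal in base $2^{yk}$'' there refers to the number $\langle (F_{n,2^{yk}})\rangle$, whereas Pillai requires the statement for $x=\langle (F_{n,2^y})\rangle$, and regrouping the binary concatenation into larger blocks does not reproduce the base-$2^{yk}$ concatenation. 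It is the windowed Jacobson count, not the earlier corollary, that bridges this gap.
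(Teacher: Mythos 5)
Your proposal is sound, but it is not the paper's argument --- and the comparison cuts in your favor. The paper's entire proof of this theorem is exactly the black-box citation you warn against: it invokes Corollary~\ref{SimplyNormalPowers2} inside Lemma~\ref{lemma:Pillai} and stops. As you observe, that does not parse: Pillai applied to the fixed real number $x=\langle(F_{n,2^y})\rangle$ requires \emph{this} number to be simply normal in base $(2^y)^k=2^{yk}$ for every $k$, whereas the Corollary asserts simple normality of $\langle(F_{n,2^{yk}})\rangle$, a different real number --- the base-$2^{yk}$ digit grid of $x$ is anchored at the radix point rather than at the boundaries of the individual Fibonacci numbers, and the leading-zero padding differs, so regrouping does not reproduce the base-$2^{yk}$ concatenation. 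Your repair supplies precisely what is missing and is genuinely different from anything written in the paper: since Jacobson's multiplicity $v(2^M,z)$ in Theorem~\ref{Jacobson} depends only on $z\bmod 32$, every pattern of bits in a window lying entirely above the $2^4$ place occurs equally often in one period of the Fibonacci sequence modulo $2^M$, giving \emph{joint} equidistribution of consecutive binary places; this is strictly stronger than the single-bit marginal statement of Theorem~\ref{thm:UniDistBase2}, and together with the density-zero excision of straddling and low-lying blocks it yields simple normality of $x$ itself in base $2^{yk}$, as Pillai demands. Indeed your windowed count is needed even earlier: the paper's proof of Corollary~\ref{SimplyNormalPowers2} (via Theorem~\ref{thm:LimitingFreq}) already requires each base-$2^j$ digit --- a $j$-bit window --- to be uniformly distributed, which Theorem~\ref{thm:UniDistBase2} does not provide for $j\geq 2$. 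One caveat: your treatment of block alignment and boundary-straddling blocks tacitly assumes the grid offsets (governed by cumulative digit lengths modulo $k$) do not correlate with the residues being counted; this is left unproven, but here you are no less rigorous than the paper, whose measure-zero argument in Theorem~\ref{thm:LimitingFreq} operates at the same level of informality.
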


\begin{proof}
This follows immediately from Corollary~\ref{SimplyNormalPowers2} and Lemma~\ref{lemma:Pillai}
\end{proof}

\section{Bases of the form $5^x2^y$}\label{sec:MainThm}

Consider now bases of the form $5^x2^y$. Because $5$ and $2$ are not Wall-Sun-Sun primes, by Lemma~\ref{lemma:WallCriteria} we know that $\pi(5^x)=20(5^{x-1})$ and $\pi(2^y)=3(2^{y-1})$. Further, because $2^y$ and $5^x$ will always be coprime, by Lemma \ref{lemma:lcm} we know that 
\begin{align*}
\pi(5^x2^y)=\text{LCM}\left(\pi(5^x),\pi(2^y)\right)=\begin{cases} 
     																	 12\times5^x & \text{if}\  1\varleq y\varleq3\\
   																	 12\times5^x\times2^{y-3} & \text{if}\ y>3.
  																\end{cases}
\end{align*}
For example, if $x=y=1$, then $\pi(2\times5)=12(5^1)=60=\pi(10)$. Jacobson \cite{[Jacobson]} considered the Fibonacci sequence modulo integers of the form $5^x2^y$ and obtained the following result:

\begin{theorem}[Jacobson]\label{Jacobson2}
Denote the number of occurrences of $z$ as a residue in one (shortest) period of the Fibonacci sequence modulo $5^x2^y$ by $v(5^x2^y,z)$. Then for $x\vargeq0$ and $y\vargeq5$ we have that 
\[
v(5^x2^y,z)=\begin{cases} 
     						 1 & \text{if}\  z\equiv3\mod4\\
   						    2 & \text{if}\  z\equiv0\mod8\\
  						    3 & \text{if}\  z\equiv1\mod4\\
							 8 & \text{if}\  z\equiv2\mod32\\
							0 & \text{for all other residues.} 
  						\end{cases}
\]
\end{theorem}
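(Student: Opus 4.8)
The plan is to reduce everything to Jacobson's mod-$2^y$ count (Theorem~\ref{Jacobson}) via the Chinese Remainder Theorem, exploiting two facts: the conditions defining $v(5^x2^y,z)$ involve only the residue of $z$ modulo $32$ — hence only $z\bmod 2^y$ once $y\geq 5$ — and the prescribed values $1,2,3,8$ are exactly the values of $v(2^y,\,z\bmod 2^y)$ from Theorem~\ref{Jacobson}. Writing $P=\pi(5^x2^y)$, the number $v(5^x2^y,z)$ counts the indices $n$ in one period $\{0,\dots,P-1\}$ with $F_n\equiv z$. By CRT a residue $z$ modulo $5^x2^y$ corresponds to a pair $(a,b)$ with $a=z\bmod 5^x$ and $b=z\bmod 2^y$, so it suffices to prove
\[
v(5^x2^y,z)\;=\;v(2^y,b)\qquad\text{for every }a,
\]
that is, to show the joint count is \emph{independent of the $5^x$-part} $a$.

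First I would record the period lengths. Since neither $2$ nor $5$ is a Wall--Sun--Sun prime, Lemma~\ref{lemma:WallCriteria} gives $\pi(5^x)=4\cdot5^x$ and $\pi(2^y)=3\cdot2^{y-1}$, and Lemma~\ref{lemma:lcm} gives $P=\text{LCM}(\pi(5^x),\pi(2^y))=3\cdot2^{y-1}\cdot5^x$, so the mod-$2^y$ pattern repeats $P/\pi(2^y)=5^x$ times in one period and $\gcd(\pi(2^y),\pi(5^x))=4$. Fix $b$ and let $T_b\subseteq\ZZ/\pi(2^y)\ZZ$ be the set of index-residues $t$ with $F_t\equiv b\pmod{2^y}$; by Theorem~\ref{Jacobson} we have $|T_b|=v(2^y,b)$. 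Every index $n$ with $F_n\equiv b\pmod{2^y}$ satisfies $n\equiv t\pmod{\pi(2^y)}$ for a unique $t\in T_b$, and there are exactly $5^x$ such $n$ in $[0,P)$ for each $t$. Writing $n=t+j\,\pi(2^y)$ and reducing the index modulo $\pi(5^x)=4\cdot5^x$, the values $n\bmod\pi(5^x)$ run exactly once over the coset $\{\,n'\equiv t\pmod 4\,\}$, because $\gcd(\pi(2^y),\pi(5^x))=4$ forces the subgroup generated by $\pi(2^y)\bmod\pi(5^x)$ to be the order-$5^x$ subgroup of multiples of $4$ in $\ZZ/4\cdot5^x\ZZ$. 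Thus the whole theorem rests on the following equidistribution statement.

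\emph{Key Lemma.} For each fixed $r$ and each $x\geq 1$, the map $m\mapsto F_{r+4m}\bmod 5^x$ is a bijection of $\ZZ/5^x\ZZ$ onto itself. To prove it I would set $u_m=F_{r+4m}$, which by the Lucas identity $F_{k+4}=L_4F_k-F_{k-4}=7F_k-F_{k-4}$ satisfies the recurrence $u_{m+1}=7u_m-u_{m-1}$. Its characteristic polynomial $\lambda^2-7\lambda+1$ has discriminant $45\equiv 0\pmod 5$ and factors as $(\lambda-1)^2\pmod 5$; the repeated root forces $u_m\equiv\alpha+\beta m\pmod 5$ to be affine-linear in $m$, with slope $\beta\equiv F_{r+4}-F_r\equiv 3F_{r+1}+F_r\pmod 5$. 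A short check over the $20$ index-residues shows this slope is a unit modulo $5$ for every $r$, so $u_m\bmod 5$ is already a bijection; a Hensel/lifting argument then promotes this to a bijection modulo $5^x$, using that the period of $u_m$ modulo $5^x$ is exactly $\pi(5^x)/\gcd(4,\pi(5^x))=5^x$. Granting the lemma, for each fixed $t\in T_b$ exactly one of the $5^x$ indices $n\equiv t\pmod{\pi(2^y)}$ satisfies $F_n\equiv a\pmod{5^x}$, and summing over the $|T_b|=v(2^y,b)$ classes in $T_b$ yields $v(5^x2^y,(a,b))=v(2^y,b)$, independent of $a$, which is precisely the claimed table.

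The main obstacle is the Key Lemma, and specifically lifting the affine-linear (hence bijective) behavior from $5$ to $5^x$. This equidistribution is strictly stronger than Niederreiter's Lemma~\ref{lemma:Niederreiter}, which only guarantees each residue $4$ times per period of length $4\cdot5^x$ rather than once per congruence class of the index modulo $4$, so Niederreiter cannot be invoked as a black box. The degeneracy of the recurrence modulo $5$ (the repeated root, equivalently the ramification of $\sqrt{5}$ at $5$) is exactly the structural feature that makes the finer statement true, and the delicate part is controlling the higher-order corrections through the lift so that the increment $u_{m+5^{x-1}}-u_m$ remains a unit multiple of $5^{x-1}$ at each stage.
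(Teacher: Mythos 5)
First, a point of comparison: the paper does not prove this statement at all --- it is imported from Jacobson's work with a citation and a one-sentence gloss (uniform distribution modulo powers of $5$ lets the modulo-$2^y$ counts pass to modulus $5^x2^y$). So your proposal is a from-scratch reconstruction, and its skeleton is the right one, indeed the one the gloss points to: split $z$ by CRT into $(a,b)=(z \bmod 5^x,\, z\bmod 2^y)$, observe that the defining congruences and the counts $1,2,3,8$ only see $b$, and reduce to showing the joint count is independent of $a$. Your bookkeeping is correct throughout: $\pi(5^x)=4\cdot 5^x$, $\pi(2^y)=3\cdot 2^{y-1}$, $P=3\cdot 2^{y-1}5^x$, $\gcd\left(\pi(2^y),\pi(5^x)\right)=4$, and for fixed $t\in T_b$ the indices $n=t+j\pi(2^y)$, $0\varleq j<5^x$, sweep the coset $t+4\ZZ$ in $\ZZ/\pi(5^x)\ZZ$ exactly once. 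You are also right that the resulting Key Lemma is strictly stronger than Lemma~\ref{lemma:Niederreiter} (four occurrences per period says nothing about how they fall among index classes modulo $4$), and the Key Lemma is in fact a true statement.

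The one genuine gap is the proof of the Key Lemma itself: the repeated-root computation gives bijectivity only modulo $5$, and ``a Hensel/lifting argument then promotes this to a bijection modulo $5^x$'' is an assertion, not an argument --- Hensel's lemma does not directly apply to a map defined on the index set of a recurrence, and your closing paragraph concedes exactly this point. The gap is fillable by an exact-valuation computation that replaces the lift entirely. For indices congruent modulo $4$, write $r+4m=s+t$ and $r+4m'=s-t$ with $t=2(m-m')$ even; the identity $F_{s+t}-F_{s-t}=L_sF_t$ (valid for even $t$) gives
\[
F_{r+4m}-F_{r+4m'}=L_{r+2(m+m')}\,F_{2(m-m')}.
\]
No Lucas number is divisible by $5$ (the Lucas sequence modulo $5$ is $2,1,3,4$ repeating), and the $5$-adic valuation of Fibonacci numbers satisfies $v_5(F_n)=v_5(n)$ for every $n$; hence $v_5\left(F_{r+4m}-F_{r+4m'}\right)=v_5(m-m')$ exactly. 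Therefore $F_{r+4m}\equiv F_{r+4m'} \pmod{5^x}$ forces $m\equiv m'\pmod{5^x}$, so $m\mapsto F_{r+4m}\bmod 5^x$ is injective, hence bijective, on $\ZZ/5^x\ZZ$. (This also proves, rather than assumes, your claim that $u_{m+5^{x-1}}-u_m$ is a unit multiple of $5^{x-1}$.) With this substitution your argument is complete, and it supplies an actual proof of a theorem the paper only cites.
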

Jacobson showed here that because the Fibonacci sequence is uniformly distributed modulo every power of $5$, the residues in the Fibonacci sequence modulo any power of $5^x2^y$ will be the same as the residues modulo any power of $2^y$ provided, $y\vargeq5$.

\begin{theorem}\label{thm:UniDistBase5x2y}
For $\beta=5^x2^y$, and for all $k\vargeq 5$, the digits $0, 1, \ldots, \beta-1$ in $\left(\Phi_{\beta^k}(n)\right)_{n\in\mathbb{N}}$ are uniformly distributed.
\end{theorem}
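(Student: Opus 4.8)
The plan is to count, over one full period of $\left(\Phi_{\beta^k}(n)\right)_{n\in\mathbb{N}}$, how many times each digit $d\in\{0,1,\ldots,\beta-1\}$ appears in the $\beta^k$'s place, and to show that this count is the same for every $d$. I will assume $y\geq 1$, since the case $y=0$ gives $\beta=5^x$ and is already covered by Proposition~\ref{prop:base5unif}. First I would record the link between the digit sequence and residues modulo $\beta^{k+1}=5^{x(k+1)}2^{y(k+1)}$. Because neither $5$ nor $2$ is a Wall-Sun-Sun prime, Lemma~\ref{thm:PiDigits} tells us that one period of $\left(\Phi_{\beta^k}(n)\right)$ has length $N_k=\pi(\beta^{k+1})$, which is exactly one full period of the Fibonacci sequence modulo $\beta^{k+1}$. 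Writing $F_n=q\beta^{k+1}+r$ with $0\leq r<\beta^{k+1}$ shows $\Phi_{\beta^k}(n)=\lfloor r/\beta^k\rfloor$, so the digit in the $\beta^k$'s place depends only on $F_n\bmod\beta^{k+1}$, and it equals $d$ precisely when $r\in[d\beta^k,(d+1)\beta^k)$. Hence the number of $n$ in one period with $\Phi_{\beta^k}(n)=d$ is $\sum_{z\in[d\beta^k,(d+1)\beta^k)}v(\beta^{k+1},z)$.

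The crucial observation is that the power of $2$ dividing $\beta^{k+1}$ is $y(k+1)\geq 5$ (using $y\geq 1$ and $k\geq 5$), so Theorem~\ref{Jacobson2} applies to $\beta^{k+1}$ and shows that $v(\beta^{k+1},z)$ depends only on the residue $z\bmod 32$; write $v(\beta^{k+1},z)=g(z\bmod 32)$. I would then use that each block $[d\beta^k,(d+1)\beta^k)$ is a run of $\beta^k$ consecutive integers and that $32=2^5$ divides $\beta^k=5^{xk}2^{yk}$, since $yk\geq 5$. A run of $\beta^k$ consecutive integers contains each residue class modulo $32$ exactly $\beta^k/32$ times, so for every $d$ we get
\[
\sum_{z\in[d\beta^k,(d+1)\beta^k)}v(\beta^{k+1},z)=\frac{\beta^k}{32}\sum_{r=0}^{31}g(r),
\]
which is independent of $d$. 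Thus every digit $0,1,\ldots,\beta-1$ occurs equally often in one period of $\left(\Phi_{\beta^k}(n)\right)_{n=1}^{N_k}$, i.e. with frequency $1/\beta$, giving uniform distribution.

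I expect the delicate points to be bookkeeping rather than genuine difficulty. The step that needs care is justifying that Theorem~\ref{Jacobson2} applies to $\beta^{k+1}$ in the mixed case: raising $\beta=5^x2^y$ to the power $k+1$ keeps the exponent of $5$ nonnegative (immediate) and pushes the exponent of $2$ up to $y(k+1)$, and it is exactly the hypotheses $y\geq 1$ and $k\geq 5$ that force $y(k+1)\geq 5$. The second point to confirm is that $v(\beta^{k+1},z)$ is genuinely a function of $z\bmod 32$ alone; this is the content of Theorem~\ref{Jacobson2}, whose four nonzero cases $z\equiv 3\pmod 4$, $z\equiv 0\pmod 8$, $z\equiv 1\pmod 4$, and $z\equiv 2\pmod{32}$ are each a union of residue classes modulo $32$. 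With these in hand, the counting argument is purely combinatorial and the conclusion follows.
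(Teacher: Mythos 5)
Your proof is correct and follows essentially the same route as the paper: both reduce the frequency of the digit $d$ to counting residues of the Fibonacci sequence modulo $\beta^{k+1}$, both invoke Theorem~\ref{Jacobson2} (valid because the exponent of $2$ in $\beta^{k+1}$ is $y(k+1)\geq 5$ once $y\geq1$ and $k\geq5$), and both conclude uniformity from the fact that $v(\beta^{k+1},z)$ depends only on $z \bmod 32$, which cannot be affected by the $\beta^k$'s-place digit. The only difference is bookkeeping: the paper fixes the trailing base-$\beta$ digits and matches leading digits in a one-to-one correspondence, whereas you sum $v(\beta^{k+1},z)$ over the interval $\left[d\beta^k,(d+1)\beta^k\right)$ and use $32 \mid \beta^k$ to see that the sum is independent of $d$ --- a slightly tidier formulation of the same idea, which also absorbs the $x=0$ case into the general argument.
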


\begin{proof}
Let $\beta = 5^x 2^y$. The case $x=0$ is exactly the statement of Theorem~\ref{Jacobson}, and the case $y=0$ is exactly the statement of Proposition~\ref{prop:base5unif}. 
Assume now that $x\geq1$ and $y\geq1$, so in particular $10 \mid \beta$. The  proof  follows the same structure as Theorem~\ref{Jacobson}, but the bookkeeping is a bit more complicated. 

Since $10 \mid \beta$, we have $4 \mid 100_\beta$, $8 \mid 1000_\beta$, and $32 \mid 100000_\beta$, meaning a number is 3 (resp.~1) modulo 4 exactly when the last two digits are 3 (resp.~1)  modulo 4, a number is 2 modulo 8 exactly when the last three digits are 2 modulo 8, and a number is 8 modulo 32 exactly when the last five digits are 8 modulo 32. 

Let $a_1a_0$ be a two-digit number in base $\beta$ that is congruent to 3 modulo 4. Then from~\ref{Jacobson2}, each of these numbers (written in base $\beta$) appears exactly once in one (shortest) period of the Fibonacci sequence modulo $\beta^{k+1}$:
\begin{align*}
\beta-1&\underbrace{\ldots\ldots\ldots}_\text{$k-5$ \text{ digits}} a_4a_3a_2a_1a_0\\
&\vdots\\
1&\underbrace{\ldots\ldots\ldots}_\text{$k-5$ \text{ digits}} a_4a_3a_2a_1a_0\\
0&\underbrace{\ldots\ldots\ldots}_\text{$k-5$ \text{ digits}}a_4a_3a_2a_1a_0.
\end{align*}

Similarly, if $a_1a_0$ is a two-digit number in base $\beta$ that is congruent to 1 modulo 4, then each of these numbers (written in base $\beta$) appears exactly three times in one (shortest) period of the Fibonacci sequence modulo~$\beta^{k+1}$:
\begin{align*}
\beta-1&\underbrace{\ldots\ldots\ldots}_\text{$k-5$ \text{ digits}} a_4a_3a_2a_1a_0\\
&\vdots\\
1&\underbrace{\ldots\ldots\ldots}_\text{$k-5$ \text{ digits}} a_4a_3a_2a_1a_0\\
0&\underbrace{\ldots\ldots\ldots}_\text{$k-5$ \text{ digits}}a_4a_3a_2a_1a_0.
\end{align*}

If $a_2a_1a_0$ represents a three-digit number in base $\beta$ that is congruent to 0 modulo 8, then each of these numbers appears exactly twice in one (shortest) period of the Fibonacci sequence modulo~$\beta^{k+1}$:
\begin{align*}
\beta-1&\underbrace{\ldots\ldots\ldots}_\text{$k-5$ \text{ digits}} a_4a_3a_2a_1a_0\\
&\vdots\\
1&\underbrace{\ldots\ldots\ldots}_\text{$k-5$ \text{ digits}} a_4a_3a_2a_1a_0\\
0&\underbrace{\ldots\ldots\ldots}_\text{$k-5$ \text{ digits}}a_4a_3a_2a_1a_0.
\end{align*}

And if $a_4a_3a_2a_1a_0$ represents a five-digit number that is congruent to 2 modulo 32, then each of these numbers appears exactly eight times in one (shortest) period of the Fibonacci sequence modulo $\beta^{k+1}$:
\begin{align*}
\beta-1&\underbrace{\ldots\ldots\ldots}_\text{$k-5$ \text{ digits}} a_4a_3a_2a_1a_0\\
&\vdots\\
1&\underbrace{\ldots\ldots\ldots}_\text{$k-5$ \text{ digits}} a_4a_3a_2a_1a_0\\
0&\underbrace{\ldots\ldots\ldots}_\text{$k-5$ \text{ digits}}a_4a_3a_2a_1a_0.
\end{align*}

No other numbers appear in the Fibonacci sequence modulo $\beta^{k+1}$. This one-to-one correspondence shows that each first digit $0, 1, \dots, \beta-1$ appears exactly as often as every other digit in the Fibonacci sequence modulo $\beta^{k+1}$. It follows that the digits $0, 1, \dots, \beta-1$ in $\left(\Phi_{\beta^k}(n)\right)_{n\in\mathbb{N}}$ are uniformly distributed for $k\vargeq 5$.
\end{proof}

\begin{proposition}\label{SimplyNormalPowers5x2y}
The Fibonacci concatenation is \textit{simply normal} in every base of the form $\beta=5^x2^y$.
\end{proposition}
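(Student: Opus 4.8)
The plan is to combine the two results immediately preceding this statement: Theorem~\ref{thm:UniDistBase5x2y}, which pins down the digit distribution in every sufficiently high place value, and Theorem~\ref{thm:LimitingFreq}, which converts such a place-value statement into simple normality of the concatenation. This mirrors exactly the proof of Corollary~\ref{SimplyNormalPowers2} for the bases $2^y$; the only difference is that here the potentially ``bad'' place values run up through $\beta^4$ rather than being entirely absent, so the whole difficulty has been front-loaded into Theorem~\ref{thm:UniDistBase5x2y}.

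First I would invoke Theorem~\ref{thm:UniDistBase5x2y} to record that for $\beta = 5^x 2^y$ and for every $k \geq 5$, the digits $0, 1, \ldots, \beta-1$ appear with frequency $1/\beta$ in the sequence $\left(\Phi_{\beta^k}(n)\right)_{n\in\mathbb{N}}$. This is precisely the hypothesis of Theorem~\ref{thm:LimitingFreq} with threshold $K = 5$: there exists a $K$ (namely $K=5$, uniformly in $x$ and $y$) beyond which every place value of the Fibonacci sequence in base $\beta$ is uniformly distributed.

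Then I would apply Theorem~\ref{thm:LimitingFreq} directly. Its content is that the finitely many place values $\beta^0, \beta^1, \ldots, \beta^4$ that may fail to be uniformly distributed contribute only a Lebesgue measure zero set of digits to the concatenation, so they cannot affect the limiting relative frequency of any digit. Since all but finitely many place values are uniformly distributed, each digit $0, 1, \ldots, \beta-1$ occurs in $\left<\left(F_{n,\beta}\right)\right>$ with limiting relative frequency $1/\beta$, which is exactly the definition of simple normality.

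There is no genuine obstacle remaining at this stage; the substantive work, namely establishing uniform distribution in the high place values by lifting Jacobson's residue counts (Theorem~\ref{Jacobson2}) through the one-to-one correspondence between residues carrying a $0$ and a $1$ in the leading digit, was already completed in Theorem~\ref{thm:UniDistBase5x2y}. The one point I would state explicitly is that the threshold $K=5$ does not depend on $x$ or $y$, so a single application of Theorem~\ref{thm:LimitingFreq} settles every base of the form $5^x 2^y$ simultaneously rather than requiring a separate argument for each.
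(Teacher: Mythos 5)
Your proof is correct and takes essentially the same route as the paper: invoke Theorem~\ref{thm:UniDistBase5x2y} for uniform distribution in every place value $k \geq 5$, then apply Theorem~\ref{thm:LimitingFreq} with threshold $K=5$ to conclude simple normality. In fact, the paper's own one-line proof contains a typo, citing Theorem~\ref{thm:UniDistBase5x2y} twice where the second reference is clearly intended to be Theorem~\ref{thm:LimitingFreq}, which is exactly the pairing you spell out.
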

\begin{proof}
This immediately follows from Theorem~\ref{thm:UniDistBase5x2y} and Theorem~\ref{thm:UniDistBase5x2y}.
\end{proof}
As before, now that we know every base that is a power of $5^x2^y$ is simply normal, we can invoke Borel's definition of normality and achieve a stronger result.

\begin{proof}[Proof of Main Theorem]
The proof that the Fibonacci concatenation is \textit{normal} in every base of the form $\beta=5^x2^y$ immediately follows from Proposition~\ref{SimplyNormalPowers5x2y} and Lemma~\ref{lemma:Pillai}.
\end{proof}

\section{Other bases}
In this section we provide computational evidence to suggest that our Main Theorem holds for infinitely many bases, perhaps in every base. We begin with bases $\beta$ such that $\omega(\beta)=4$~\cite{[A053029]}:
\[
5, 10, 13, 17, 25, 26, 34, 37, 50, 53, 61, 65, 73, 74, 85, 89, 97, 106, 109, 113, 122, 125, 130, 137, 146, 149,\ldots
\]
We have seen that in the smallest entry on this list, base $5$, the Fibonacci concatenation is normal. There is computational and heuristic evidence to believe the Fibonacci concatenation is normal in \textit{every} base on this list. We begin with a definition.

\begin{definition}\label{def:Upsilon}
For an integer base $\beta$, let $\Upsilon(\beta)$ denote the smallest $K$ such that for all $k\vargeq K$, the digits $0,1,2,\ldots,\beta-1$ are uniformly distributed in $\left(\Phi_{\beta^k}(n)\right)_{n=1}^{N_k}$. 
\end{definition}
\begin{conjecture}\label{conj:omega4UD}
For every base $\beta$ where $\omega(\beta)=4$, there exists a $K$ such that for all $k\vargeq K$, the digits in the sequences $\left(\Phi_{\beta^k}(n)\right)_{n\in\mathbb{N}}$ are uniformly distributed.
\end{conjecture}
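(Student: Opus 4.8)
The plan is to extend the explicit residue counts of Jacobson (Theorems~\ref{Jacobson} and~\ref{Jacobson2}) from the bases $5^x2^y$ to an arbitrary base $\beta$ with $\omega(\beta)=4$. It is worth isolating why the proof of Theorem~\ref{thm:UniDistBase5x2y} succeeds, because the naive hope fails: the Fibonacci sequence is \emph{not} uniformly distributed modulo $\beta^{k+1}$ in general (that happens only for powers of $5$ and for $2\cdot5^x$, $4\cdot5^x$). What does hold is that the \emph{multiplicity} with which a residue $z$ occurs in one shortest period depends only on $z$ modulo the fixed modulus $32$. Once $32\mid\beta^k$, the congruences governing this multiplicity constrain only the lowest few base-$\beta$ digits of $z$, so the digit in the $\beta^k$'s place is unconstrained and each of its $\beta$ values is attained by an equal number of residues. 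The real object to control is therefore a \emph{stabilized multiplicity function}, and my target is the following statement, call it $(\star)$: for every base $\beta$ with $\omega(\beta)=4$ there exist a fixed modulus $M_\beta$ and an index $K$ such that for all $k\vargeq K$ the number of occurrences of a residue $z$ in one shortest period of the Fibonacci sequence modulo $\beta^{k+1}$ depends only on $z\bmod M_\beta$. Granting $(\star)$, the digit-extraction argument of Theorem~\ref{thm:UniDistBase5x2y} applies verbatim once $M_\beta\mid\beta^k$, and shows that $\Upsilon(\beta)$ is finite in the sense of Definition~\ref{def:Upsilon}, which is exactly the conclusion sought.

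To establish $(\star)$ I would first use Renault's classification (Theorem~\ref{thm:Renault}) together with the characterization in Conjecture~\ref{conj:oeis} to reduce to the structured shape of an $\omega=4$ base: an odd product of primes each having $\omega=4$, or twice such a product. By Lemma~\ref{lemma:lcm} the period $\pi(\beta^{k+1})$ is the least common multiple of the prime-power periods, so via the Chinese Remainder Theorem one can attempt to reconstruct $v(\beta^{k+1},z)$ from per-prime-power occurrence data. The two prime-power inputs already available are the extreme cases: Niederreiter's Lemma~\ref{lemma:Niederreiter} gives the strongest possible stability for powers of $5$ (perfect uniformity, so the multiplicity is literally constant), and Jacobson's Theorem~\ref{Jacobson} gives stability for powers of $2$ with $M=32$. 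The new ingredient is the analogous \emph{per-prime stability}: for each odd prime $p$ with $\omega(p)=4$, the multiplicity $v(p^{j},z)$ should depend only on $z$ modulo a fixed power of $p$ once $j$ is large. I would prove this by the $p$-adic lifting analysis underlying Wall's Lemma~\ref{lemma:WallCriteria} and Proposition~\ref{conj:PiGrowth2}: since such a $p$ is not a Wall--Sun--Sun prime, $\pi(p^{j+1})=p\,\pi(p^{j})$, so passing from $p^{j}$ to $p^{j+1}$ multiplies the period length by exactly $p$, and one tracks how each residue class splits among its $p$ lifts.

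The main obstacle is the Chinese Remainder recombination of the multiplicities, not the per-prime stability. Because $\pi(\beta^{k+1})$ is a least common multiple rather than a product of the component periods, the count $v(\beta^{k+1},z)$ is \emph{not} the product of the component counts $v(p_i^{(k+1)e_i},z_i)$; it instead measures how often the indices at which each component attains its target residue coincide within one global period. Controlling this requires understanding the joint distribution of the ``phases'' at which the several prime components simultaneously hit prescribed residues, which is a genuine equidistribution question about the index $n$ modulo the various $\pi(p_i^{e_i})$. I expect this synchronization analysis --- showing that the coincidences are themselves governed only by $z\bmod M_\beta$ for a fixed $M_\beta$ independent of $k$ --- to be the hard step. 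The special role of $\omega(\beta)=4$ should enter precisely here: the multiplier $s=F_{\alpha+1}$ (where $\alpha$ is the rank of apparition) satisfies $F_{n+\alpha}\equiv sF_n$, and $\omega(\beta)=4$ forces $s$ to have order $4$, imposing a rigid $\ZZ/4$ symmetry on the residues that appear. It is plausibly this symmetry that keeps the synchronization pattern stable in $k$ and ultimately forces the digit in the $\beta^k$'s place to equidistribute.
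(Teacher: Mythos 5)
First, a point of orientation: the statement you set out to prove is labeled a \emph{conjecture} in the paper, and the authors offer no proof of it --- only computational evidence (the table of $\Upsilon(\beta)$ values for the prime bases $5, 13, 17, 37, 53, 61$, each checked to a handful of place values) and a heuristic sketch of how one might mimic the arguments of Section~\ref{sec:MainThm}. So there is no proof in the paper to compare yours against; the question is whether your proposal closes the gap the authors left open, and it does not. Your own text concedes this at the decisive moment: the ``Chinese Remainder recombination'' --- showing that the number of indices $n$ in one period of length $\mathrm{LCM}\left(\pi(p_1^{(k+1)e_1}),\ldots,\pi(p_j^{(k+1)e_j})\right)$ at which \emph{all} prime components simultaneously hit prescribed residues depends only on $z$ modulo a fixed $M_\beta$ --- is exactly the open problem, and you explicitly defer it. This is not a technicality. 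In the one case where such a recombination has been carried out (Jacobson's Theorem~\ref{Jacobson2}), it is tractable precisely because the $5$-part is \emph{perfectly} uniformly distributed (Lemma~\ref{lemma:Niederreiter}), so it imposes no statistical constraint on the synchronization; for an odd prime $p>5$ with $\omega(p)=4$ the multiplicities $v(p^j,z)$ are not uniform, and no analogue of your stabilization statement $(\star)$ is known. Likewise, the per-prime stability you propose to extract from Wall's Lemma~\ref{lemma:WallCriteria} does not follow from it: that lemma controls the period \emph{length}, $\pi(p^{j+1})=p\,\pi(p^j)$, but says nothing about how occurrence counts distribute among the $p$ lifts of a residue. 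Jacobson's count for $p=2$ (Theorem~\ref{Jacobson}) required a separate, substantial analysis, and neither the paper nor your sketch supplies the analogous input for $p=13,17,\ldots$.

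Second, your reduction to the ``structured shape'' of an $\omega=4$ base leans on Conjecture~\ref{conj:oeis}, which is itself an unproven OEIS conjecture recorded in the paper, so even the structural starting point of your argument is conjectural. In summary: your outline (a stabilized multiplicity function modulo a fixed $M_\beta$, followed by the digit-extraction argument of Theorem~\ref{thm:UniDistBase5x2y} once $M_\beta\mid\beta^k$) is the natural one and matches the authors' own stated hopes, and your identification of where $\omega(\beta)=4$ should enter (the order-$4$ multiplier $F_{\alpha+1}$) is a genuine insight beyond what the paper records; but with two unproven inputs and the central synchronization step left open, what you have is a research program, not a proof.
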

In fact, the data collected in Table 6\footnote{Computations done in Mathematica.} below suggests that for all \textit{prime} bases $p>5$ where $\omega(p)=4$, conjecture \ref{conj:omega4UD} holds for $K=1$. In other words, the only time $\left(\Phi_{p^k}(n)\right)_{n\in\mathbb{N}}$ is not uniformly distributed is in the ones place. 
\begin{table}[H]\label{table6}
\begin{tabular}{c|c|c}
Base $\beta$&$\Upsilon(\beta)$&Searched to $\beta^k$'s place\\
\hline
5&0&Proven normal\\
\hline
13&1&$13^4$'s place\\
\hline
17&1&$17^4$'s place\\
\hline
37&1&$37^3$'s place\\
\hline
53&1&$53^2$'s place\\
\hline
61&1&$61^2$'s place\\
\end{tabular}
\caption{The first few prime bases with four zeros in their Pisano period.}
\end{table}

\begin{conjecture}\label{conj:omega4normal}
In every base $\beta$ where $\omega(\beta)=4$, and for all nonnegative $x$, the Fibonacci concatenation is normal in every base of the form $2^y\beta$.
\end{conjecture}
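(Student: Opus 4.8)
The plan is to mirror the proof of the Main Theorem, replacing the exact uniform distribution of Niederreiter (Lemma~\ref{lemma:Niederreiter}) by its conjectural analogue (Conjecture~\ref{conj:omega4UD}) while retaining Jacobson's control of the $2$-adic residues (Theorems~\ref{Jacobson} and~\ref{Jacobson2}). Concretely, it suffices to prove the analogue of Proposition~\ref{SimplyNormalPowers5x2y}: for every base $\beta$ with $\omega(\beta)=4$ and every $y\geq0$, the Fibonacci concatenation is \emph{simply} normal in base $2^y\beta$. Granting this, full normality follows from Pillai's Lemma~\ref{lemma:Pillai}, because the family $\{2^y\beta : \omega(\beta)=4\}$ is closed under taking powers: writing $\beta=2^a\gamma$ with $\gamma$ odd (so $a\in\{0,1\}$ by part~(i) of Conjecture~\ref{conj:oeis}), we have $(2^y\beta)^m=2^{(y+a)m}\gamma^m$, and $\gamma^m$ is again odd with every prime factor having four zeros in its Pisano period, hence $\omega(\gamma^m)=4$. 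Thus simple normality in all powers $(2^y\beta)^m$ yields normality in $2^y\beta$ itself.

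To establish the simple-normality step I would, exactly as in Theorem~\ref{thm:LimitingFreq}, reduce to showing that $\left(\Phi_{B^k}(n)\right)_{n\in\mathbb{N}}$ is uniformly distributed for all sufficiently large $k$, where $B=2^y\beta$. The case $y=0$ is immediate: there $B=\beta$, and Conjecture~\ref{conj:omega4UD} supplies a threshold (the value $\Upsilon(\beta)$ of Definition~\ref{def:Upsilon}) beyond which $\left(\Phi_{\beta^k}(n)\right)$ is uniform, so Theorem~\ref{thm:LimitingFreq} applies directly. For $y\geq1$ I would set $B=2^{Y}\gamma$ with $Y=y+a$ and $\gamma$ odd, and attempt to prove a hybrid of Jacobson's Theorem~\ref{Jacobson2}: using the Chinese Remainder Theorem on $B^{k+1}=2^{Y(k+1)}\gamma^{k+1}$, decompose each Fibonacci residue into its $2$-adic and $\gamma$-adic parts. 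Jacobson's Theorem~\ref{Jacobson} pins down the $2$-adic residue counts (for $Y(k+1)\geq5$ they depend only on the residue modulo $32$), while the $\gamma$-adic part is controlled by the uniform distribution supplied by Conjecture~\ref{conj:omega4UD}. The goal is to show that, in one shortest period of length $\text{LCM}\bigl(\pi(2^{Y(k+1)}),\pi(\gamma^{k+1})\bigr)$, each value of the leading (that is, $B^k$'s place) digit is attained equally often.

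The main obstacle is precisely this combination. In the $5^x2^y$ setting it is painless because Niederreiter's result is \emph{residue-level} uniformity: $\pi(5^k)=4\cdot5^k$, so every residue modulo $5^k$ occurs exactly four times per period, and the joint distribution factors cleanly into Jacobson's $2$-adic counts times a constant. For a general odd $\gamma$ with $\omega(\gamma)=4$ this fails --- already $\pi(13)=28\neq4\cdot13$, so the residues modulo $13$ are \emph{not} uniformly distributed, and Conjecture~\ref{conj:omega4UD} guarantees only uniformity of the individual \emph{digits}, not of the full residues. Consequently the joint distribution of the $2$-adic and $\gamma$-adic phases need not be a product, since $\pi(2^{Y(k+1)})$ and $\pi(\gamma^{k+1})$ can share common factors, and one must rule out a systematic correlation that biases the leading digit. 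I expect this coupling analysis --- quantifying $\gcd\bigl(\pi(2^{Y(k+1)}),\pi(\gamma^{k+1})\bigr)$ and showing that the $\gamma$-part digit uniformity survives restriction to each fixed $2$-adic phase --- to be the heart of the argument, and the reason the statement is posed as a conjecture rather than a theorem; once it is in hand, Theorem~\ref{thm:LimitingFreq} and Pillai's Lemma~\ref{lemma:Pillai} finish the proof as above.
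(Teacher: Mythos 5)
This statement is a \emph{conjecture} in the paper: there is no proof to compare against, only a heuristic outline (use Conjecture~\ref{conj:omega4UD} to get uniform digit distribution beyond some place, apply Theorem~\ref{thm:LimitingFreq} for simple normality, note closure of the family under powers via Conjecture~\ref{conj:oeis}, and finish with Lemma~\ref{lemma:Pillai}). Your proposal reproduces exactly that outline, and does so more carefully than the paper: your closure-under-powers argument (writing $\beta=2^a\gamma$ with $a\in\{0,1\}$ and invoking part (i) of Conjecture~\ref{conj:oeis} to get $\omega(\gamma^m)=4$) is the step the paper waves at in one sentence, and it is needed before Pillai's lemma can be invoked at all.

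Your real contribution is the diagnosis of why the Section~\ref{sec:MainThm} argument does not transfer, and it is correct: Lemma~\ref{lemma:Niederreiter} gives \emph{residue-level} uniformity modulo $5^k$ (each residue appears exactly four times, since $\pi(5^k)=4\cdot5^k$), which is what lets Jacobson's Theorem~\ref{Jacobson2} factor the count modulo $5^x2^y$ into a clean product; for $\gamma=13$ one has $\pi(13)=28$, so residue-level uniformity is impossible, and Conjecture~\ref{conj:omega4UD} supplies only \emph{digit-level} uniformity, which is strictly weaker and does not obviously survive restriction to a fixed $2$-adic phase under the Chinese Remainder Theorem. This coupling problem is precisely what the paper papers over by positing Conjecture~\ref{UpsilonMAX}. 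But be clear about the status of what you have written: it is not a proof, and you say so yourself. It rests on two unproven inputs --- Conjecture~\ref{conj:omega4UD} itself, and the coupling analysis you flag as ``the heart of the argument'' --- so the statement remains a conjecture after your work, exactly as it does in the paper. What you have added is a correct reduction of the conjecture to those two inputs and a sharp identification of the obstruction; that is useful, but no step of it closes the gap.
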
 
First, computations below for small $\beta$ on the list indicate that the digits in each place beyond $\beta^0$ are uniformly distributed. If Conjecture \ref{conj:oeis} holds, then whenever $\omega(\beta)=4$ then $\omega(\beta^i)=4$ for all $i\vargeq1$. This suggests
 applying Theorem~\ref{thm:LimitingFreq} to show that the Fibonacci concatenation is simply normal in these bases. Furthermore, from Theorem~\ref{thm:Renault}, if $\beta_1$ and $\beta_2$ are relatively prime such that $\omega(\beta_1) = \omega(\beta_2) = 4$, then $\omega(\beta_1\beta_2) = 4$ as well. Similarly, if $\omega(\beta)=4$, then $\omega(2^y\beta)=4$. All of this suggests that one could mimic the proofs in Section~\ref{sec:MainThm} in these cases.
We could then determine what happens when a base has a prime factorization $\beta=2^yp_1^{e_1}p_2^{e_2}\cdots p_j^{e_j}$ where for every odd prime $p$, $\omega(p)=4$.

Jacobson \cite{[Jacobson]} demonstrated how to combine information about distribution of Fibonacci numbers modulo relatively prime bases $5^x$ and $2^y$ into a result modulo $5^x2^y$. We suggest a generalization of his approach phrased in terms of uniform distribution of digits:
\begin{conjecture}\label{UpsilonMAX}
Given two coprime bases $\beta_1$ and $\beta_2$ such that $\Upsilon(\beta_1)=u$ and $\Upsilon(\beta_2)=v$ then $\Upsilon(\beta_1\beta_2)=\text{max}(uv)$.
Inductively, it would follow for any finite set of coprime bases $\beta_1, \beta_2, \ldots, \beta_n$ where $\Upsilon(\beta_1)=u_1$, $\Upsilon(\beta_2)=u_2$, $\ldots$, $\Upsilon(\beta_n)=u_n$ then $\Upsilon(\beta_1, \beta_2, \ldots, \beta_n)=\text{max}(u_1,u_2,\ldots,u_n)$. 

\end{conjecture}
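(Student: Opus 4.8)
The plan is to reduce to the two-factor case — the inductive extension to $\beta_1,\dots,\beta_n$ then follows by grouping $\beta_1\cdots\beta_{n-1}$ with $\beta_n$, which are again coprime — and to prove the two inequalities $\Upsilon(\beta_1\beta_2)\le\max(u,v)$ and $\Upsilon(\beta_1\beta_2)\ge\max(u,v)$ separately. Throughout set $\beta=\beta_1\beta_2$ with $\gcd(\beta_1,\beta_2)=1$, so that $\beta^{k+1}=\beta_1^{k+1}\beta_2^{k+1}$ and, by Lemma~\ref{lemma:lcm}, $L:=\pi(\beta^{k+1})=\mathrm{LCM}(L_1,L_2)$ with $L_i:=\pi(\beta_i^{k+1})$. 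The Chinese Remainder Theorem identifies a residue $z$ modulo $\beta^{k+1}$ with the pair $(z^{(1)},z^{(2)})$, $z^{(i)}=z\bmod\beta_i^{k+1}$, and identifies the Fibonacci sequence modulo $\beta^{k+1}$ with the pair of Fibonacci sequences modulo $\beta_1^{k+1}$ and $\beta_2^{k+1}$. For each base I work with the residue-count function
\[
v_\beta^{(k)}(z)=\#\{\,n\in\{1,\dots,L\}\ :\ F_n\equiv z \!\!\pmod{\beta^{k+1}}\,\},
\]
recording that, by Lemma~\ref{thm:PiDigits}, one period of $\left(\Phi_{\beta^k}(n)\right)$ has length exactly $L$, so the $\beta^k$-place digit is uniformly distributed precisely when $W_\beta^{(k)}(d):=\sum_{r=0}^{\beta^k-1}v_\beta^{(k)}(d\beta^k+r)$ is independent of $d\in\{0,\dots,\beta-1\}$.

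The crucial structural input is a formula for the joint count. As $n$ runs over one combined period, the map $n\mapsto(n\bmod L_1,\,n\bmod L_2)$ is a bijection onto the pairs $(a,b)$ with $a\equiv b\pmod{\gcd(L_1,L_2)}$. Writing $A=\{a : F_a\equiv z^{(1)}\}$ and $B=\{b: F_b\equiv z^{(2)}\}$, whose sizes are the marginal counts $v_{\beta_1}^{(k)}(z^{(1)})$ and $v_{\beta_2}^{(k)}(z^{(2)})$, this gives
\[
v_\beta^{(k)}(z)=\#\{\,(a,b)\in A\times B\ :\ a\equiv b \!\!\pmod{\gcd(L_1,L_2)}\,\}.
\]
I would then reformulate the hypotheses $\Upsilon(\beta_1)=u$ and $\Upsilon(\beta_2)=v$ as a \emph{stability} statement — that for $k\ge\Upsilon(\beta_i)$ the count $v_{\beta_i}^{(k)}(z^{(i)})$ depends only on the low-order residue $z^{(i)}\bmod\beta_i^{\Upsilon(\beta_i)}$ — since stability of the counts at level $K$ forces $W_\beta^{(k)}(d)$ to be constant for $k\ge K$: as $r$ ranges over a full block the low-order residue is swept uniformly and the leading digit $d$ drops out. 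This is exactly the mechanism behind Theorem~\ref{thm:UniDistBase5x2y}, where Jacobson's Theorems~\ref{Jacobson} and~\ref{Jacobson2} supply stability ($v$ depends only on the residue modulo $32$) while the factor $5^x$ contributes nothing because the Fibonacci sequence is fully equidistributed modulo every power of $5$ by Lemma~\ref{lemma:Niederreiter}.

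For the upper bound I would first treat the \emph{decoupled} case in which one factor, say $\beta_1$, is fully equidistributed modulo every power (the $5^x$ situation, where $v_{\beta_1}^{(k)}$ is constant in $z^{(1)}$). There $A$ has the same size for every $z^{(1)}$, and one hopes to show the correlation count factors, so that $v_\beta^{(k)}(z)$ inherits its entire $z$-dependence from $v_{\beta_2}^{(k)}(z^{(2)})$ and hence stabilizes at level $v$; this recovers $\Upsilon(\beta)=v=\max(0,v)$ and mirrors the proof of the Main Theorem. For the lower bound $\Upsilon(\beta)\ge\max(u,v)$ I would argue by contradiction: if the $\beta^{k}$-digit were uniform at a level below $\max(u,v)$, summing the joint-count identity over the component that is already uniform should project back to a uniformity statement for the non-uniform factor, contradicting the value of $\Upsilon(\beta_1)$ or $\Upsilon(\beta_2)$.

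The main obstacle is that neither half passes cleanly through the Chinese Remainder Theorem, for two coupled reasons. First, the base-$\beta$ leading-digit block does \emph{not} factor: because $\beta^k=\beta_1^k\beta_2^k$, the quantity $\lfloor F_n/\beta^k\rfloor\bmod\beta_i$ is not the $\beta_i^k$-place digit $\Phi_{\beta_i^k}(n)$ (already for $\beta_1=2,\beta_2=3$ one checks $\lfloor 8/6\rfloor\bmod2\ne\lfloor 8/2\rfloor\bmod2$), so uniform distribution of the component digits does not transfer digit-by-digit; moreover this mismatch occurs at \emph{every} place, so it cannot be dismissed as a measure-zero effect via Theorem~\ref{thm:LimitingFreq}. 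Second, the correlation term $a\equiv b\pmod{\gcd(L_1,L_2)}$ genuinely couples the two sequences, so the joint count need not factor even when both marginals are controlled; the coupling is governed by the common factor of the two Pisano periods and must be shown to be asymptotically negligible. Making the stability reformulation precise, proving that the $\gcd(L_1,L_2)$-correlation does not disturb the leading-digit balance, and handling the digit-block mismatch are the three points where the argument must do real work, and together they are the reason the statement is posed as a conjecture rather than a theorem.
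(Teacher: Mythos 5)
This statement is Conjecture~\ref{UpsilonMAX} of the paper: the authors give \emph{no proof} of it, only the remark that it would generalize Jacobson's method of merging distribution results modulo $5^x$ and $2^y$ into one modulo $5^x2^y$, together with the computations of Section~7. So there is no proof in the paper to compare yours against, and your proposal does not supply one either --- as you yourself conclude. The verdict is therefore a genuine gap, and the fatal one is worth naming precisely. Your plan hinges on ``reformulating'' the hypothesis $\Upsilon(\beta_i)=u_i$ as a \emph{stability} statement for the residue counts, namely that $v_{\beta_i}^{(k)}(z)$ depends only on $z \bmod \beta_i^{u_i}$ for $k \ge u_i$. That is not a reformulation but a strict strengthening. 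The actual content of $\Upsilon(\beta_i)=u_i$ is only that the block sums $\sum_{r=0}^{\beta_i^k-1} v_{\beta_i}^{(k)}\bigl(d\beta_i^k+r\bigr)$ agree for all leading digits $d$ once $k \ge u_i$; it says nothing about how the counts are arranged inside a block, and equal block sums at every place do not force the counts to depend only on low-order residues. Count stability is exactly the deep, base-specific input that Theorem~\ref{Jacobson} (for $2^y$) and Lemma~\ref{lemma:Niederreiter} (for $5^x$) provide, and no analogue is known for a general base with $\Upsilon(\beta)<\infty$ --- which is precisely why the paper's Main Theorem stops at $5^x2^y$. Since your upper bound, your lower bound, and your treatment of the $\gcd(L_1,L_2)$ coupling all route through this stability, the plan at best proves a different statement with strictly stronger hypotheses; the same objection applies to the coupling term itself, which Jacobson controls by explicit computation in the one known case and which you have no mechanism to bound in general.

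What is right in your write-up is the structural frame, and it is a faithful formalization of what the paper only gestures at: the identification $v_\beta^{(k)}(z)=\#\{(a,b)\in A\times B : a\equiv b \pmod{\gcd(L_1,L_2)}\}$ is correct, and your observation that base-$\beta_1\beta_2$ digits do not factor through the Chinese Remainder Theorem into base-$\beta_1$ and base-$\beta_2$ digits (your $\lfloor 8/6\rfloor \bmod 2 \ne \lfloor 8/2 \rfloor \bmod 2$ check) is exactly right and is a point the paper never confronts. Your closing diagnosis --- that these obstacles are why the statement is a conjecture rather than a theorem --- agrees with the paper's own treatment. Two small remarks: the paper's ``$\max(uv)$'' is surely a typo for $\max(u,v)$, which you silently and correctly adopted; and note that $\Upsilon(\beta_1\beta_2)$ is not even known to be well-defined (Definition~\ref{def:Upsilon} requires the digit places of the product base to become uniformly distributed at all), so any honest attack on the conjecture must prove existence before equality.
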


If we are able to determine that the sequences $\left(\Phi_{\beta^k}(n)\right)_{n\in\mathbb{N}}$ are uniformly distributed for each base of the form $2^y\beta$ where $\omega(\beta)=4$ and $y\vargeq0$, we could proceed as above. Applying Theorem \ref{thm:LimitingFreq} we know that every base of the form $\beta\times2^y$ (including powers of these bases from Conjecture~\ref{conj:oeis}) is \textit{simply normal}, and by Lemma~\ref{lemma:Pillai} we would conclude that every base where $\omega(\beta)=4$ is \textit{normal}.

We now turn our attention to bases not yet covered by Conjecture~\ref{conj:omega4normal}. For a base $\gamma$ that falls outside the criteria of Conjecture \ref{conj:omega4normal},  the sequences $\left(\Phi_{\gamma^k}(n)\right)_{n\in\mathbb{N}}$ do not obviously become uniformly dirstibuted based on our computer search. It is possible that there is a $K$ such that for all $k\vargeq K$ the sequences $\left(\Phi_{\gamma^k}(n)\right)_{n\in\mathbb{N}}$ are all uniformly distributed, but if this is the case, the smallest such $K$ is beyond our computational power. Nevertheless,  we provide below computational evidence to believe that the Fibonacci concatenation is \textit{normal} in every base.
\begin{conjecture}\label{conj:absolutelynormal}
The Fibonacci concatenation is \textit{absolutely normal}.
\end{conjecture}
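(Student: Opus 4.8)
The plan is to reduce absolute normality to a single equidistribution statement for prime bases and then attack that statement through the $p$-adic structure of the Fibonacci sequence. By Pillai's criterion (Lemma~\ref{lemma:Pillai}), it suffices to prove that the Fibonacci concatenation is simply normal in every base $\gamma$; and by Theorem~\ref{thm:LimitingFreq}, simple normality in $\gamma$ follows once we know $\Upsilon(\gamma)<\infty$ (Definition~\ref{def:Upsilon}), i.e.\ that beyond some place the digits in $\left(\Phi_{\gamma^k}(n)\right)_{n\in\NN}$ are uniformly distributed. Writing $\gamma=\beta^j$ and invoking the multiplicativity of the Pisano period (Lemma~\ref{lemma:lcm}) together with the Chinese Remainder Theorem, I would first establish the conjectured product law $\Upsilon(\beta_1\beta_2)=\max(\Upsilon(\beta_1),\Upsilon(\beta_2))$ for coprime $\beta_1,\beta_2$ (Conjecture~\ref{UpsilonMAX}). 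Granting this, the whole problem collapses to showing $\Upsilon(p)<\infty$ for every prime $p$, since every base factors into prime powers and the threshold for a prime power is controlled by that of its prime via Wall's lemma (Lemma~\ref{lemma:WallCriteria}).

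For a fixed prime $p$, the target is to mimic Jacobson's analysis (Theorems~\ref{Jacobson} and~\ref{Jacobson2}). The key object is the residue-multiplicity function $v(p^{k+1},z)$. I would try to prove a \emph{stabilization} statement: there is a constant $c_p$ so that for all $k\vargeq c_p$ the value $v(p^{k+1},z)$ depends only on $z\bmod p^{c_p}$, and every lift of an appearing low-order residue appears. The engine is the exact period formula $\pi(p^e)=p^{e-1}\pi(p)$ for non-Wall--Sun--Sun primes (Lemma~\ref{lemma:WallCriteria}): passing from modulus $p^e$ to $p^{e+1}$ multiplies the period by $p$, and a rank-of-apparition / $p$-adic lifting argument should show that each surviving residue class spreads evenly over its $p$ lifts. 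Once the top ($p^k$) digit ranges freely over $\{0,1,\dots,p-1\}$ for each fixed low-order template, uniform distribution of the $p^k$ digit is immediate, giving $\Upsilon(p)\vargeq c_p$. This is exactly the phenomenon that makes $p=2$ and $p=5$ (and, by Conjecture~\ref{conj:omega4UD}, the primes with $\omega(p)=4$) tractable.

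The main obstacle is that this exact stabilization is not available for the generic primes, those with $\omega(p)=2$. For such $p$ the distribution of Fibonacci residues modulo $p^k$ is genuinely nonuniform, and our computations show no visible finite threshold $c_p$: the multiplicity function does not factor as a bounded low-order constraint times free high digits. What one would really have to prove is the weaker \emph{asymptotic} statement that the top-digit frequencies converge to $1/p$ as $k\to\infty$, even in the absence of exact stabilization. I expect this to require analyzing the limiting distribution of the Fibonacci orbit inside the $p$-adic integers $\ZZ_p$ --- equivalently, via Binet's formula, the $p$-adic dynamics of $n\mapsto\phi^n$, where $\phi$ and its conjugate lie in $\ZZ_p$ when $5$ splits and in an unramified quadratic extension when $5$ is inert --- and showing that the high-order digit marginals of this measure equidistribute. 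A subtlety is that such asymptotic-only convergence is not covered by the exact-uniformity hypothesis of Theorem~\ref{thm:LimitingFreq}, so one would also need a strengthened limiting-frequency lemma permitting per-place frequencies that merely tend to $1/\beta$. Proving this $p$-adic equidistribution for the $\omega(p)=2$ primes is, I believe, the crux, and is presumably why absolute normality remains out of reach.
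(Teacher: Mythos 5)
The statement you are addressing is a \emph{conjecture}: the paper offers no proof of absolute normality, only a conditional reduction plus computational evidence, and to your credit your proposal does not pretend to give one --- it ends by conceding that the crux is open. With that understood, your reduction skeleton is essentially the paper's own: Pillai's criterion (Lemma~\ref{lemma:Pillai}) reduces absolute normality to simple normality in every base (the paper's Conjecture~\ref{conj:gammasimplynormal}); Theorem~\ref{thm:LimitingFreq} reduces simple normality to per-place uniform distribution beyond a finite threshold; the coprime product law $\Upsilon(\beta_1\beta_2)=\max(\Upsilon(\beta_1),\Upsilon(\beta_2))$ you invoke is precisely the paper's Conjecture~\ref{UpsilonMAX}, itself unproven; and the Jacobson-style stabilization you hope for at each prime is, in the $\omega(p)=4$ case, the paper's Conjecture~\ref{conj:omega4UD}. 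So your plan and the paper's Section 7 heuristics coincide, and both rest on the same unproved inputs.

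Where you go beyond the paper --- and this is the most valuable part of your write-up --- is in diagnosing exactly why the argument stalls. For primes with $\omega(p)=2$ (e.g.\ $p=3$), the paper's own table shows per-place digit counts that fail to be exactly uniform at almost every computed level (e.g.\ $630:684:630$ in the $3^5$'s place), with no visible threshold $K$; only the \emph{running percentages} appear to converge to $1/p$. You correctly observe that this asymptotic-only convergence is not covered by Theorem~\ref{thm:LimitingFreq}, whose hypothesis demands exact per-place uniformity beyond some $K$; the paper's remark that one ``could stitch together'' the base-3 concatenation by the same coloring technique quietly elides this, so even granting the computational trends, the paper's route would need the strengthened limiting-frequency lemma you describe. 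Your proposed $p$-adic attack (Binet's formula inside $\ZZ_p$, or its unramified quadratic extension when $5$ is inert) is a sensible framing of what such a proof would require, but as you acknowledge it is not carried out. The bottom line: your proposal is not a proof, but no proof exists in the paper either; you have reproduced the paper's conditional architecture and identified its genuine gap --- equidistribution of high-order Fibonacci digits for the $\omega(p)=2$ primes --- more sharply than the paper itself does.
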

We will build the argument to support Conjecture \ref{conj:absolutelynormal} like before, by showing that the Fibonacci concatenation is \textit{simply normal} in every base. It follows that the Fibonacci concatenation is simply normal in every \emph{power} of every base. Then, by Lemma~\ref{lemma:Pillai}, the Fibonacci concatenation is \textit{normal} in every base.

\begin{conjecture}\label{conj:gammasimplynormal}
The Fibonacci concatenation is simply normal in \textit{every} base.
\end{conjecture}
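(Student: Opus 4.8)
The primary tool is Theorem~\ref{thm:LimitingFreq}: for a fixed base $\beta$ and target digit $d \in \{0,1,\ldots,\beta-1\}$, if one can locate a $K$ beyond which every place is uniformly distributed, then simple normality of the concatenation in base $\beta$ follows. The entire content is therefore a statement about the residue-multiplicity function $z \mapsto v(\beta^{k+1},z)$, since the digit in the $\beta^k$'s place of a residue $z$ is $\lfloor z/\beta^k\rfloor \bmod \beta$, and the frequency of $d$ in the $\beta^k$'s place over one period is $\frac{1}{\pi(\beta^{k+1})}\sum_{z:\,\lfloor z/\beta^k\rfloor\equiv d} v(\beta^{k+1},z)$. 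The plan is to push this per-place analysis as far as it will go, and to supplement it with an equidistribution argument where it breaks down.

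First I would reduce to prime powers. Writing $\beta = \prod_i p_i^{a_i}$, Wall's Lemma~\ref{lemma:lcm} together with the Chinese Remainder Theorem lets one reconstruct $v(\beta^{k+1},\cdot)$ from the prime-power data $v(p_i^{(k+1)a_i},\cdot)$; this is precisely the bookkeeping carried out for $5^x2^y$ in Theorem~\ref{thm:UniDistBase5x2y}, and Conjecture~\ref{UpsilonMAX} is the assertion that this reconstruction preserves eventual uniform distribution, with the stabilization threshold $\Upsilon$ equal to the maximum over the prime-power factors. The problem thus reduces to showing, for a single prime power $p^e$, that $\Upsilon(p^e)<\infty$, i.e.\ that the sequences $\left(\Phi_{p^k}(n)\right)$ become uniformly distributed for all large $k$.

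Here the three established templates are the model. Niederreiter's Lemma~\ref{lemma:Niederreiter} gives $\Upsilon(5)=0$ outright, and Jacobson's Theorem~\ref{Jacobson} gives $\Upsilon(2)=5$ precisely because the multiplicity $v(2^{k+1},z)$ depends only on $z$ modulo the fixed modulus $32$, so residues sharing a high-order digit pair off independently of that digit. The strategy for a general prime $p$ is to seek the analogous stabilization: prove that for large $k$ the multiplicity $v(p^{k+1},z)$ depends only on $z$ modulo a fixed power $p^{e_0}$ (a threshold controlled by Wall's Lemma~\ref{lemma:WallCriteria} and by whether $p$ is a Wall-Sun-Sun prime), and that this pattern sums equally over each choice of the $p^k$'s-place digit. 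When $\omega(p)=4$ the data of Table~6 suggest the stabilization is immediate, and Conjecture~\ref{conj:oeis} would propagate $\omega=4$ to all powers, so these bases should fall to a direct imitation of Theorem~\ref{thm:UniDistBase5x2y}.

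The main obstacle is the bases with $\omega(\beta)\in\{1,2\}$: the computer search reports that for these the sequences $\left(\Phi_{\gamma^k}(n)\right)$ do \emph{not} visibly stabilize, so Theorem~\ref{thm:LimitingFreq} is unavailable and $\Upsilon(\gamma)$ may be infinite. For these I would abandon per-place uniformity and estimate the concatenation frequency of $d$ directly, as the weighted average $\big(\sum_{k} w_k f_k(d)\big)/\big(\sum_k w_k\big)$, where $f_k(d)$ is the frequency of $d$ in the $\beta^k$'s place and $w_k\approx m-k/\log_\beta\phi$ counts the Fibonacci numbers among the first $m$ that reach place $k$. The hope is that the place-frequencies $f_k(d)$, though individually biased, oscillate about $1/\beta$ so that the weighted average still converges to $1/\beta$; making this rigorous requires a quantitative handle on $\sum_k w_k(f_k(d)-1/\beta)$. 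Two difficulties compound: the low places, governed by the poorly understood distribution of Fibonacci residues modulo $p^k$ for primes with $\omega(p)\neq 4$, and the top places, where the number of available Fibonacci numbers is far smaller than the period $\pi(\beta^{k+1})$, so one cannot average over a full period and must instead control the leading digits through the equidistribution of $(\{n\log_\beta\phi\})_{n}$ mod $1$ (Weyl's criterion, using the irrationality of $\log_\beta\phi$). Reconciling the periodic low-place regime with the equidistributed high-place regime uniformly across all bases is, I expect, the genuinely hard part, and is why the statement remains a conjecture rather than a theorem.
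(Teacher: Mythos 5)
Your proposal is not a proof, and your final sentence concedes as much; to be clear, that is the correct status. The statement in question is Conjecture~\ref{conj:gammasimplynormal}, which the paper itself does not prove: its entire support in Section~7 is computational --- the table of running digit counts in base $3$ and the convergence plots for bases $3, 6, 7, 9, 11, 12, 14, 15, 18$, whose running percentages appear to tend to $1/\gamma$ --- together with a sketch of how a proof might go. So there is no paper proof to compare against, and a complete argument from you would have been a new theorem. Judged as a proof, your attempt has two concrete gaps. First, the reduction to prime powers and the treatment of bases with $\omega(p)=4$ rest on unproven statements (Conjectures~\ref{conj:oeis}, \ref{conj:omega4UD}, and \ref{UpsilonMAX}); moreover, the CRT reconstruction of $v(\beta^{k+1},\cdot)$ from prime-power data requires joint, not merely marginal, residue counts --- exactly the hard bookkeeping Jacobson carried out for $5^x2^y$ and which is not available in general. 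Second, and more fundamentally, for bases with $\omega\in\{1,2\}$ your weighted-average scheme needs $\sum_k w_k\left(f_k(d)-1/\beta\right) = o\bigl(\sum_k w_k\bigr)$, and you supply no mechanism forcing the per-place biases to cancel; the paper's own data (in base $3$, the $0$:$1$:$2$ percentages are still roughly $33.4\%$ : $32.1\%$ : $34.5\%$ after eleven places) show that the convergence, if real, is slow, and nothing currently rules out a persistent bias.

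That said, your outline is in places sharper than the paper's own discussion. For $\omega(\beta)=4$ bases it coincides with the program the authors sketch (Conjectures~\ref{conj:omega4UD}--\ref{UpsilonMAX} feeding into Theorem~\ref{thm:LimitingFreq} and Lemma~\ref{lemma:Pillai}), so there you and the paper are aligned. More interestingly, your point about the top places identifies an issue the paper elides even in its proven cases: in any truncation, almost all digits (in the density sense) lie in places $k$ that have been sampled for less than one full period, since $\pi(\beta^{k+1})$ grows geometrically in $k$ while the number of Fibonacci numbers among the first $m$ that reach place $k$ is only about $m - k/\log_\beta\phi$. Uniformity over a full period therefore does not by itself control partial-period frequencies, and your appeal to equidistribution of $\{n\log_\beta\phi\}$ modulo $1$ --- equivalently, the Benford-law behavior of leading Fibonacci digits, which are logarithmically rather than uniformly distributed --- is the right kind of tool for that regime. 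Making the periodic low-place regime and the equidistributed high-place regime meet is, as you say, where the difficulty lives, both for this conjecture and, arguably, for a fully rigorous version of Theorem~\ref{thm:LimitingFreq} itself.
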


\begin{example*}
Consider again the Fibonacci sequence in base 3. Below is a table of the first few sequences of $\left(\Phi_{3^k}(n)\right)_{n}^{N_k}$ along with a running total of the frequency of the digits $0, 1, 2$ and a running percentage showing the distribution at finite levels. The running total and running percentages are calculated using the nesting property (Lemma \ref{nesting}): in one period of $3^k$'s place digits there are three periods of $3^{k-1}$'s place digits, nine periods of $3^{k-2}$'s place digits, twenty seven periods of $3^{k-3}$'s place digits etc.
\begin{table}[H]
\centering
\begin{tabular}{c|c|c|c|c|c}
$3^k$'s&$0$'s&$1$'s&$2$'s&Running total of $0$'s:$1$'s:$2$'s&Running Percentage of $0$'s:$1$'s:$2$'s\\
\hline
$3^0$'s&2&3&3&2 : 3 : 3&25.0000\% : 37.5000\% : 37.5000\%\\
\hline
$3^1$'s&9&6&9&15 : 15 : 18&31.2500\% : 31.2500\% : 37.5000\%\\
\hline
$3^2$'s&27&18&27&72 : 63 : 81&$33.\overline{3333}$\% : $29.1\overline{666}$\% : 37.5000\%\\
\hline
$3^3$'s&75&66&75&291 : 255 : 318&$33.680\overline{5}\%$ : $29.513\overline{8}$\% : $36.80\overline{55}\%$\\
\hline
$3^4$'s&216&216&216&1089 : 981 : 1170&$33.6\overline{111}$\% : $30.2\overline{777}$\% : $36.\overline{1111}$\%\\
\hline
$3^5$'s&630&684&630&3897 : 3627 : 4140&33.4105\% : 31.0957\% : 35.4938\%\\
\hline
$3^6$'s&1971&1890&1971&13662 : 12771 : 14391&33.4656\% : 31.2831\% : 35.2513\%\\
\hline
$3^7$'s&5859&5778&5859&46845 : 44091 : 49032&33.4684\% : 31.5008\% : 35.0309\%\\
\hline
$3^8$'s&17577&17334&17577&158112 : 149607 : 164673&33.4705\% : 31.6701\% : 34.8594\%\\
\hline
$3^9$'s&52326&52812&52326&5266621 : 501633 : 546345&33.4465\% : 31.8570\% : 34.6964\%\\
\hline
$3^{10}$'s&157707&156978&157707&1737693 : 1661877 : 1796742&33.4409\% : 31.9818\% : 34.4577\%\\
\hline
$3^{11}$'s&472635&471906&472635&5685714 : 5457537 : 5862861&33.4334\% : 32.0916\% : 34.4750\%
\end{tabular}
\caption{Table of the number of times each digit appears in $\left(\Phi_{3^k}(n)\right)_{n=1}^{N_k}$ for $k\varleq11$.}
\end{table}
\end{example*}

Notice in the table that the limiting frequency of each digit seems to approach $1/3$. If this running percentage of the number of times each digit $0, 1, 2$ appears in  $\left(\Phi_{3^k}(n)\right)_{n=1}^{N_k}$ does in fact tend to $1/3$, then each digit will appear equally often in the limit of the Fibonacci concatenation. This is exactly what we would expect from a number that is \textit{simply normal}. From this, we could stitch together the Fibonacci concatenation in base $3$ exactly like we did before with the coloring technique. Note that base $3$ is not the only base $\gamma$ in which the running percentage of digits appears to converge to $1/\gamma$. In every base we have studied, the running percentage of digits in base $\gamma$  converges to $1/\gamma$. Below, we graph the running percentage of the first few such bases. 

\begin{figure}[H]
\minipage{0.32\textwidth}
\caption*{Base 3}
\includegraphics[width=\linewidth]{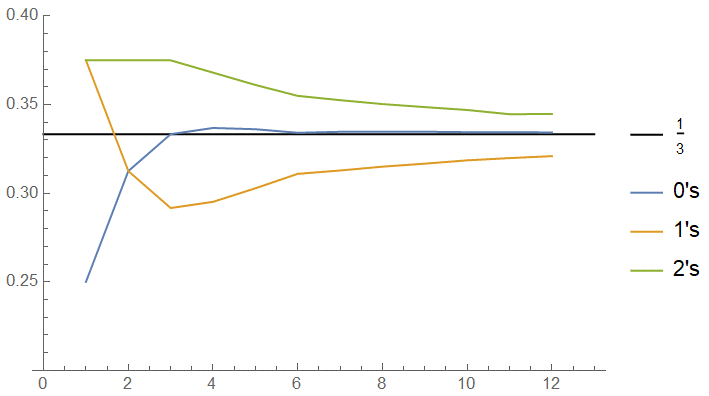}
\caption*{$3^0$'s$\ldots3^{11}$'s}
\endminipage\hfill
\minipage{0.32\textwidth}
\caption*{Base 6}
\includegraphics[width=\linewidth]{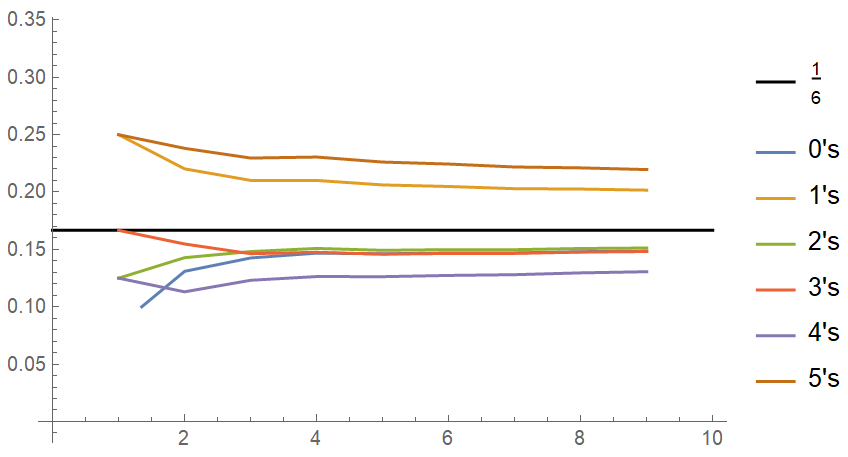}
\caption*{$6^0$'s$\ldots6^{8}$'s}
\endminipage\hfill
\minipage{0.32\textwidth}
\caption*{Base 7}
\includegraphics[width=\linewidth]{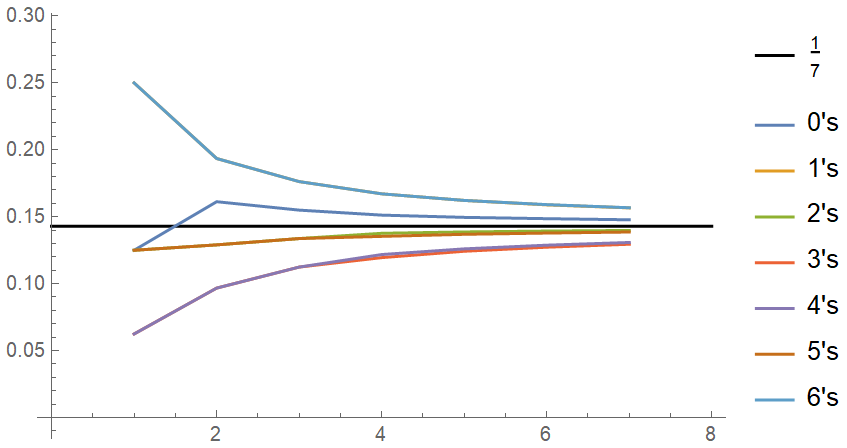}
\caption*{$7^0$'s$\ldots7^{6}$'s}
\endminipage\hfill
\minipage{0.32\textwidth}
\caption*{Base 9}
\includegraphics[width=\linewidth]{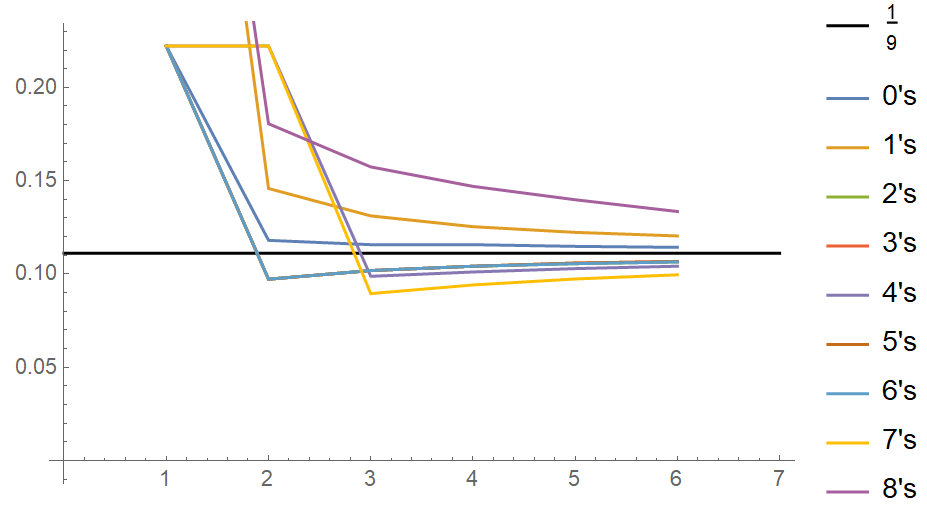}
\caption*{$9^0$'s$\ldots9^{5}$'s}
\endminipage\hfill
\minipage{0.32\textwidth}
\caption*{Base 11}
\includegraphics[width=\linewidth]{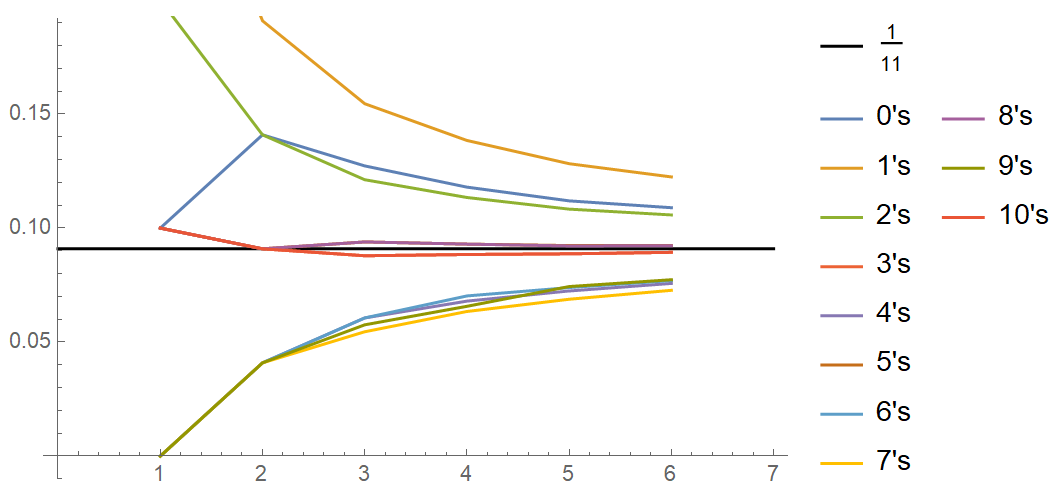}
\caption*{$11^0$'s$\ldots11^{5}$'s}
\endminipage\hfill
\minipage{0.32\textwidth}
\caption*{Base 12}
\includegraphics[width=\linewidth]{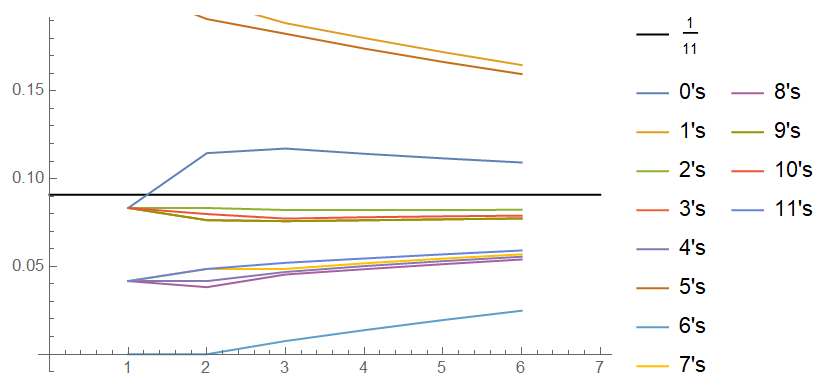}
\caption*{$12^0$'s$\ldots12^{5}$'s}
\endminipage\hfill
\minipage{0.32\textwidth}
\caption*{Base 14}
\includegraphics[width=\linewidth]{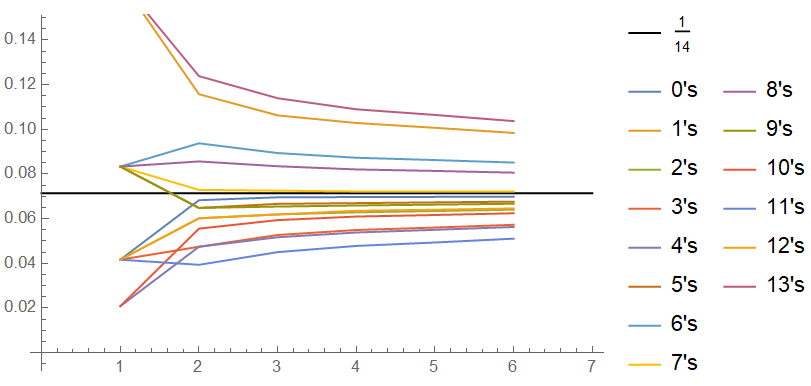}
\caption*{$14^0$'s$\ldots14^{5}$'s}
\endminipage\hfill
\minipage{0.32\textwidth}
\caption*{Base 15}
\includegraphics[width=\linewidth]{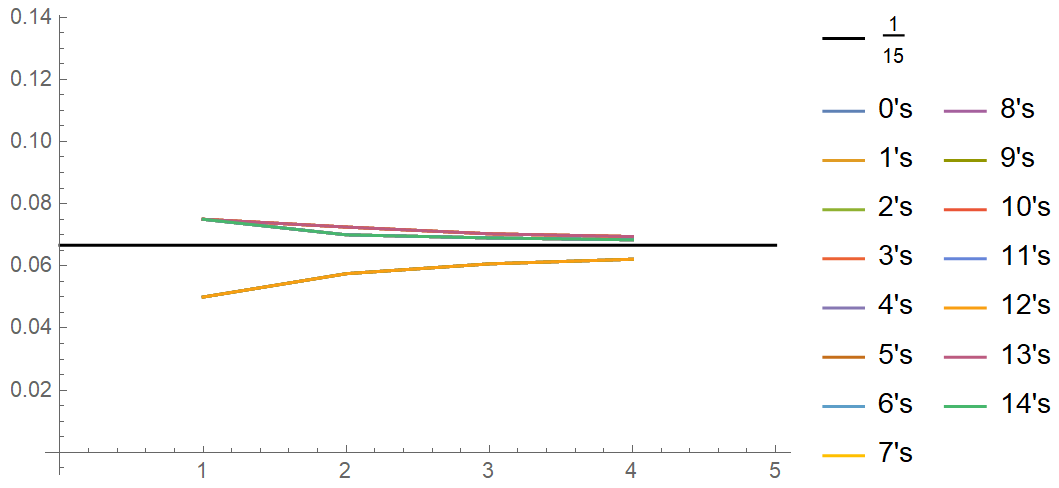}
\caption*{$15^0$'s$\ldots15^{4}$'s}
\endminipage\hfill
\minipage{0.32\textwidth}
\caption*{Base 18}
\includegraphics[width=\linewidth]{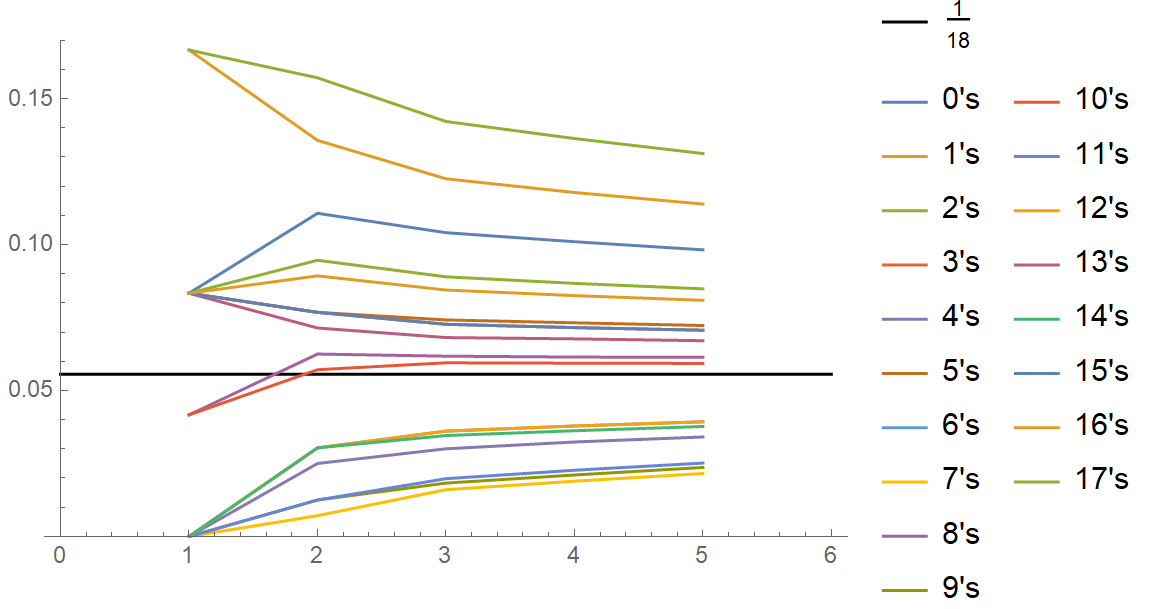}
\caption*{$18^0$'s$\ldots18^{4}$'s}
\endminipage\hfill
\caption{Running Percentage of digits $0, 1, 2, \ldots, 1-\gamma$ with line $1/\gamma$}
\end{figure}
These computations provide evidence that Conjecture \ref{conj:gammasimplynormal} holds.  If Conjecture \ref{conj:gammasimplynormal} holds for a base $\gamma$, then it necessarily holds for every base $\gamma^i$ for all $i$. We could then invoke Lemma \ref{lemma:Pillai} one more time and claim that the Fibonacce concatenation is \textit{absolutely normal}.

\section*{Acknowledgements}
The authors would like to extend their sincerest gratitude to Will Brian for his early encouragement on this project, to Marc Renault, and Joseph Vandehey for their correspondence and expertise, and to Michael De Vlieger for his brilliant Mathematica code.

\bibliographystyle{plain}
\bibliography{FibConcatBib}{}

\end{document}